\documentclass[a4paper,10pt,twoside]{article}
\usepackage{wiaspreprint}
\usepackage{amsmath,amssymb,amsthm,mathptmx,hyperref,nicefrac}
\usepackage{titlesec}
\numberwithin{equation}{section}
\newtheorem{theorem}{Theorem}[section]
\theoremstyle{remark}
\newtheorem{remark}[theorem]{Remark}
\theoremstyle{definition}
\newtheorem{definition}[theorem]{Definition}
\newtheorem{proposition}[theorem]{Proposition}
\newtheorem{lemma}{Lemma}[section]
\newtheorem{corollary}[theorem]{Corollary}
\newcommand{\f}[1]{\pmb{#1}}

\DeclareMathOperator{\N}{\mathbb{N}}
\DeclareMathOperator{\R}{\mathbb{R}}

\DeclareMathOperator{\C}{\mathcal{C}}
\DeclareMathOperator{\F}{\mathcal{F}}
\DeclareMathOperator{\GY}{\text{GY}}

\DeclareMathOperator{\V}{\f H^1_{0,\sigma}}
\DeclareMathOperator{\Vd}{(\f H^{1}_{0,\sigma})^*}

\DeclareMathOperator{\Ha}{\f L^2_{\sigma}}
\DeclareMathOperator{\Hb}{\f{H}^1_0}
\DeclareMathOperator{\Hc}{\f H^2}
\DeclareMathOperator{\Hd}{\f H^{-1}}
\DeclareMathOperator{\He}{\f{H}^1}
\DeclareMathOperator{\Le}{\f{L}^2}

\DeclareMathOperator{\sgn}{sgn}


\newcommand{\Hrand}[1]{{\f H^{\nicefrac{#1}{2}}(\partial\Omega)}}

\DeclareMathOperator{\Sr}{\mathbb{E}}

\DeclareMathOperator{\S2}{\mathbb{S}^2}
\DeclareMathOperator{\Se}{\mathcal{S}}

\DeclareMathOperator{\M}{\mathcal{M}}

\DeclareMathOperator{\ra}{\rightarrow}
\DeclareMathOperator{\de}{\text{d}}
\DeclareMathOperator{\tr}{tr}

\newcommand{\dreidots}{\text{\,\multiput(0,-2)(0,2){3}{$\cdot$}}\,\,\,\,}

\newcommand{\dreidotkom}{\text{\,\multiput(0,0)(0,2){2}{$\cdot$}\put(0,0){,}}\,\,\,\,}

\newcommand{\pat}[2]{\frac{\partial #1}{\partial #2}}
\DeclareMathOperator{\di}{\nabla \cdot}

\newcommand{\ov}[1]{\overline{#1}}

\newcommand{\rot}[1]{[ #1 ]_{\f X}}
\newcommand{\rott}[1]{[ #1 ]_{-\f X}}
\DeclareMathOperator{\curl}{\nabla \times}

\newcommand{\intte}[1]{\int_{0}^T{ #1} \de t}
\newcommand{\inttet}[1]{\int_{0}^{t}{ #1} \de s}
\newcommand{\inttett}[1]{\int_{0}^{t}\left [{ #1} \right ]\de s}

\renewcommand{\ll}[1]{\langle\hspace{-0.75mm}\langle{#1}\rangle\hspace{-0.75mm}\rangle}

\DeclareMathOperator{\sym}{{sym}}
\DeclareMathOperator{\Lap}{\Delta_{\f \Lambda}}

\DeclareMathOperator{\skw}{skw}
\newcommand{\sy}[1]{(\nabla \f #1)_{{\sym}}}
\newcommand{\sk}[1]{(\nabla \f #1)_{\skw}}
\renewcommand{\t}{\partial_t  }

\newcommand{\fn}[1]{\f {{#1}}_{n}}

\newcommand{\vv}{\tilde{\f v}}
\newcommand{\dd}{\tilde{\f d}}

\newcommand{\syv}{(\nabla \tilde{\f v})_{{\sym}}}

\newcommand{\skv}{(\nabla \tilde{\f v})_{\skw}}

\renewcommand{\o}{\otimes}

\newcommand{\tq}{\tilde{\f q}}
\newcommand{\te}{\tilde{\f e}}
\newcommand{\HH}{\tilde{\f H}}

\begin{document}
\author[R. Lasarzik]{
Robert Lasarzik\nofnmark\footnote{Weierstrass Institute \\
Mohrenstr. 39 \\ 10117 Berlin \\ Germany \\
E-Mail: robert.lasarzik@wias-berlin.de}}

\title[Dissipative solution to the Ericksen--Leslie system equipped with the Oseen--Frank energy]{Dissipative solution to the Ericksen--Leslie system\\equipped with the Oseen--Frank energy
}	
\nopreprint{2508}	
\selectlanguage{english}		
\subjclass[2010]{35Q35, 35K55, 35R06, 35R45, 76A15}	
\keywords{Liquid crystal,
Ericksen--Leslie equation,
dissipative solutions,
measure-valued solutions,
long-time behavior,
relative energy%
}
\maketitle
\begin{abstract}
We analyze the Ericksen--Leslie system equipped with the Oseen--Frank energy in three space dimensions. 
The new concept of dissipative solutions is introduced. 
Recently, the author introduced the concept of measure-valued solutions to the considered system and showed global existence as well as weak-strong uniqueness of these generalized solutions. In this paper, we show that the expectation of the measure valued solution is a dissipative solution. 
The concept of  a dissipative solution itself relies on an inequality instead of an equality, but is described by functions instead of parametrized measures. 
 These solutions exist globally and fulfill the weak-strong uniqueness property. 
 Additionally, we generalize the relative energy inequality to solutions fulfilling different nonhomogeneous Dirichlet boundary conditions and incorporate the influence of a temporarily constant electromagnetic field.  
Relying on this generalized energy inequality,  we investigate the long-time behavior and show that  all solutions converge for the large time limit to a certain steady state. 
\end{abstract}

\setcounter{tocdepth}{2}
\tableofcontents


\section{Introduction}\label{sec:intro}
Nonlinear partial differential equations require generalized solution concepts.  
In a recent series of articles, we introduced the concept of measure-valued solutions to the Ericksen--Leslie system equipped with the Oseen--Frank energy, showed their global existence~\cite{masswertig} and weak-strong uniqueness~\cite{weakstrong}. To show the weak-strong uniqueness, the relative energy approach was generalized to nonconvex functionals (see~\cite{weakstrong} and~\cite{sabine}). 
In the article at hand, we take another path towards generalized solution concepts. Instead of generalizing the solution concept by going from functions to parametrized measures (so-called generalized Young measures), we require that the solution fulfills an inequality instead of an equality.
Therewith, one ends up with so-called \textit{dissipative solutions}.

The concept of dissipative solutions was first introduced by Pierre Louis Lions in the context of the Euler equations~\cite[Sec.~4.4]{lionsfluid} with ideas originating from the Boltzmann equation~\cite{LionsBoltzman}. 
It is also applied in the context of incompressible viscous
electro-magneto-hydro\-dy\-na\-mics (see Ars\'{e}nio and Saint-Raymond~\cite{raymond}), the two-component Camassa--Holm system~\cite{dissCamassa}, and equations of viscoelastic diffusion in polymers~\cite{viscoelsticdiff}.
The term dissipative solutions is also used for solutions to the Navier--Stokes--Fourier system (see Feireisl~\cite{feireislsingular}), whereas this concept is a little different from the dissipative solutions in this paper or in~\cite{lionsfluid} (see Remark~\ref{rem:feireisl}). 

All these solution concepts rely on the appropriate estimate of an associated relative entropy. 
In our context, the main quantities of interest are (free) energies rather than entropies. We therefore refer henceforth to a "relative energy".
In the context of thermodynamics, the idea of a relative energy to compare two solutions goes back to Dafermos~\cite{dafermos}.
 For a convex G\^{a}teaux differentiable  energy function $\eta: V \ra \R$ for a Banach space $V$, the relative energy of two solutions~$u$ and $\tilde{u}$ is given by (see Ref.~\cite[Sec.~5.3]{dafermos2})
\begin{align}
\mathcal{E}:= \eta (u) - \eta(\tilde{u}) - \langle\eta'(\tilde{u}), u -\tilde{u}\rangle \,.\label{relencon}
\end{align}
The  convexity of $\eta$ guarantees that $\mathcal{E}$ is nonnegative (see~\cite[Kapitel~III,Lemma~4.10]{groeg}). 
One novelty of the case presented in this paper is that the considered energy functional is nonconvex.
  For nonconvex functionals, the quantity given in~\eqref{relencon} is not necessarily positive anymore. 
In~\cite{weakstrong}, we provided a remedy by introducing a relative energy for the nonconvex case of the Oseen--Frank energy. 
In the article at hand, this relative energy is adapted in order to introduce the concept of dissipative solutions to the Ericksen--Leslie system equipped with the Oseen--Frank energy describing nematic liquid crystal flow.

Nematic liquid crystals are anisotropic fluids. The rod- or disk-like molecules build, or are dispersed in, a fluid and are directionally ordered. 
This ordering and its direction heavily influences the properties of the material such as light scattering or rheology. This gives rise to many applications, where \textit{liquid crystal displays} are only the most prominent ones. 


Due to its simplicity and its good agreement with experiments, the \textit{Ericksen--Leslie model} is the most common model to describe nematic liquid crystals. 
In nematic liquid crystals, the molecules tend to be aligned in a common direction, at least in equilibrium situations. This predominant direction is described by a unit vector, the so-called director, henceforth denoted by $\f d$. 
The director can be seen as the local average of the directions over a set of molecules. 
Since the rod-like molecules show a head-to-tail symmetry, the directions $\f d$ and $- \f d$ are not distinguishable. 

In the mathematics community, there has been quit some work on the existence of strong and weak solutions to several simplifications of the Ericksen--Leslie model. 
The first mathematical analysis of  a simplified Ericksen--Leslie model is done by Lin and Liu~\cite{linliu1}. They show global existence of weak solutions and local existence of strong solutions. Additionally, they manage to generalise these results to a more realistic model~\cite{linliu3}. They also show partial regularity of weak solutions to the considered system~\cite{linliu2}. Following this work, there have been many articles considering slightly more complicated models, see~\cite{prohl},~\cite{allgemein},~or~\cite{isothermal} for example. 
To the best of the author's knowledge, the only generalization with respect to the free energy potential is performed by Emmrich and the author in~\cite{unsere}. 
There are also results on \textit{local existence of solutions} to realistic models (see for instance~\cite{localin3d},~\cite{recent} or~\cite{Pruess2}). Especially, local strong solutions are known to exists to different simplifications of the system considered in this article.
The full (thermodynamically consistent) Ericksen--Leslie system with the one constant approximation of the Oseen--Frank energy is considered in~\cite{Pruess2}. Whereas, the simplified Ericksen--Leslie system with the full Oseen--Frank energy is studied in~\cite{localin3d}. 
For more on liquid crystals, we refer to Emmrich, Klapp and Lasarzik~\cite{sabine}.

In the physics community, it is well-known that the  nematic order can be destroyed during the evolution of the  material. The uniaxial nematic liquid crystal can become biaxial, i.e., exhibiting two predominant directions~\cite{olmsted}.
Therefore, it is not surprising that a global solution concept relies on generalized Young measures, where a possible biaxiality can be described by a strongly oscillating Gradient resulting in a Young measure. 
From a na\"{i}ve point of view, it is possible that in the evolution of the liquid crystal, a jump from the initialization $\f d$ happens to $-\f d$ since both cannot be distinguished (see~\cite{singul2}). 
The generalized Young measures describes possible concentration effects in the gradient implying  jumps in the function itself. 

From the modeling point of view, one is not interested in these jumps from $\f d $ to $-\f d$. Also, the director was initially modeling the local average of the directions of the molecules. So it seems that the interesting part of the measure-valued solution is the expectation of the Young measure, since it provides the local averaged direction. 
In the paper at hand, we argue that the expectation of the Young measure is indeed a dissipative solution. 
As we explained above, this is in accordance with the modeling. 


Additionally, we show in a future article that dissipative solutions are more suitable for numerical approximations than measure-valued solutions (see~\cite{approx}).

By the introduction of dissipative solutions~(see Definition~\ref{def:diss}), we show that the relative energy inequality is an effective comparing tool for different solutions and as such provides a new meaningful   solvability concept (existence and weak-strong uniqueness are fulfilled, compare to Lions~\cite[Sec.~4.4]{lionsfluid}). 
In the case of convex energy functionals, the relative energy was also used to consider the long-time behavior of solutions (see~\cite{feireislstab}). In the paper at hand, we generalize this to the nonconvex case and discuss implications of the relative energy inequality on the long-time behavior. 
Therefore, we present a generalized relative energy inequality, which  holds for solutions (or test functions) with different boundary values and incorporates the influence of an electromagnetic field. 
The electromagnetic fields as well as the boundary values are assumed to be constant in time.  

The adapted relative energy allows us to show, that all solutions (measure-valued or dissipative) converge for large times to a steady state consisting of  a vanishing velocity field and a director field that fulfills the Euler--Lagrange equations of the Oseen--Frank energy in a measure-valued sense (see Theorem~\ref{thm:longtime}).
%
%


The paper is organised as follows: In Section~\ref{sec:not}, we collect some notation. Section~\ref{sec:model} contains the model and Section~\ref{sec:rel} the definition of dissipative as well as measure-valued solutions and the main result.
The proof of the main result is carried out in Section~\ref{sec:pre}. 
In Section~\ref{sec:5}, we derive the generalized relative energy inequality for functions with different boundary values and incorporating the influence of an electromagnetic field. Finally in Section~\ref{sec:6}, we comment on necessary optimality conditions for the Oseen--Frank energy and investigate the long-time behavior of solutions.


\subsection{Notation\label{sec:not}}
Vectors of $\R^3$ are denoted by bold small Latin letters. Matrices of $\R^{3\times 3}$ are denoted by bold capital Latin letters. We also use tensors of higher order, which are denoted by bold capital Greek letters.
Moreover, numbers are denoted be small Latin or Greek letters, and capital Latin letters are reserved for potentials.
The euclidean scalar product in $\R^3$ is denoted by a dot $ \f a \cdot \f b : = \f a ^T \f b = \sum_{i=1}^3 \f a_i \f b_i$,  for $ \f a, \f b \in \R^3$ and the Frobenius product in $\R^{3\times 3}$ by a double point $ \,\f A: \f B:= \tr ( \f A^T \f B)= \sum_{i,j=1}^3 \f A_{ij} \f B_{ij}$, for $\f A , \f B \in \R^{3\times 3}$.
Additionally, the scalar product in the space of Tensors of order three is denoted by three dots 
\begin{align*}
\f \Upsilon \dreidots\, \f \Gamma : =\left [ \sum_{j,k,l=1} ^3 \f \Upsilon_{jkl} \f \Gamma_{jkl}\right ], \quad    \f \Upsilon \in \R^{3\times 3 \times 3 },  \, \f \Gamma \in \R^{3\times 3 \times 3}  .
\end{align*}
The associated norms are all denoted by $| \cdot |$, where also the norms of tensors of higher order are denoted in the same way 
\begin{align*}
| \f \Lambda|^2 := \sum_{i,j,k,l=1}^3 
\f \Lambda_{ijkl}^2\,,\quad\text{for }\f \Lambda \in \R^{3^4} \quad 
\text{and }\quad| \f \Theta |^2  := \sum_{i,j,k,l,m,n=1}^3 \f \Theta ^2_{ijklmn}\,,\quad\text{for } 
\f \Theta \in \R^{3^6}\,
\end{align*}
respectively.
Similar, we define the products of tensors of different order.
The product of a tensor of third order with a matrix  is defined by
\begin{align*}
\f \Gamma : \f A := \left [ \sum_{j,k=1}^3 \f \Gamma_{ijk}\f A_{jk}\right ]_{i=1}^3\, ,  \,   \f \Gamma \cdot \f A := \left [ \sum_{k=1}^3 \f \Gamma_{ijk}\f A_{kl}\right ]_{i,j,l=1}^3\, ,  \,    \f \Gamma\in \R^{3 \times 3\times 3 } , \, \f A \in \R^{3\times 3}\,.
\end{align*}
The product of a tensor of fourth order with a matrix is defined by
\begin{align*}
\f \Lambda : \f A : =\left [ \sum_{k,l=1} ^3 \f \Lambda_{ijkl} \f A_{kl}\right ]_{i,j=1}^3\,, \, 
 \,    \f \Lambda \in \R^{3^4 },  \,  \f A \in \R^{3\times 3 } \, 
.
\end{align*}
The product of a tensor of fourth order and a matrix or a tensor of third order is defined via
\begin{align*}
 \f A : \f \Theta : ={}& \left [ \sum_{i,j=1} ^3 \f A_{ij} \f \Theta_{ijklmn}  \right ]_{k,l,m,n=1}^3 , \, \f \Theta \dreidots \f \Gamma : = \left [ \sum_{l,m,n=1} ^3 \f \Theta_{ijklmn} \f \Gamma_{lmn}\right ]_{i,j,k=1}^3 ,  \,  
  \f \Theta \in \R^{3^6},\f A \in \R^{3\times 3} ,\f \Gamma \in \R^{3\times 3 \times 3}\,.
\end{align*}
The product of a vector and a tensor of fourth order is defined differently. The definition is adjusted to the cases of this work:
 \begin{align*}
\f a \cdot \f \Theta :={}& \left [ \sum_{k=1} ^3 \f a_{k} \f \Theta_{ijklmn}  \right ]_{i,j,l,m,n=1}^3,
\, 
  \f \Theta \in \R^{3^6},\f a \in \R^{3} \,.
\end{align*}
The standard matrix and matrix-vector multiplication is written without an extra sign for bre\-vi\-ty,
$$\f A \f B =\left [ \sum _{j=1}^3 \f A_{ij}\f B_{jk} \right ]_{i,k=1}^3 \,, \quad  \f A \f a = \left [ \sum _{j=1}^3 \f A_{ij}\f a_j \right ]_{i=1}^3\, , \quad  \f A \in \R^{3\times 3},\,\f B \in \R^{3\times3} ,\, \f a \in \R^3 .$$
The outer vector product is given by
 $\f a \otimes \f b := \f a \f b^T = \left [ \f a_i  \f b_j\right ]_{i,j=1}^3$ for two vectors $\f a , \f b \in \R^3$ and by $ \f A \o \f a := \f A \f a ^T = \left [ \f A_{ij}  \f a_k\right ]_{i,j,k=1}^3 $ for a matrix $ \f A \in \R^{3\times 3} $ and a vector $ \f a \in \R^3$. 
The symmetric and skew-symmetric parts of a matrix are given by 
$\f A_{\sym}: = \frac{1}{2} (\f A + \f A^T)$ and 
$\f A _{\skw} : = \frac{1}{2}( \f A - \f A^T)$, respectively ($\f A \in \R^{3\times  3}$).

We use  the Nabla symbol $\nabla $  for real-valued functions $f : \R^3 \to \R$, vector-valued functions $ \f f : \R^3 \to \R^3$ as well as matrix-valued functions $\f A : \R^3 \to \R^{3\times 3}$ denoting
\begin{align*}
\nabla f := \left [ \pat{f}{\f x_i} \right ] _{i=1}^3\, ,\quad
\nabla \f f  := \left [ \pat{\f f _i}{ \f x_j} \right ] _{i,j=1}^3 \, ,\quad
\nabla \f A  := \left [ \pat{\f A _{ij}}{ \f x_k} \right ] _{i,j,k=1}^3\, .
\end{align*}
 The divergence of a vector-valued and a matrix-valued function is defined by
\begin{align*}
\di \f f := \sum_{i=1}^3 \pat{\f f _i}{\f x_i} = \tr ( \nabla \f f)\, , \quad  \di \f A := \left [\sum_{j=1}^3 \pat{\f A_{ij}}{\f x_j}\right] _{i=1}^3\, .
\end{align*}

For a given tensor of fourth order, we abbreviate the associated second order operator by 
$\Delta_{\f \Lambda}\f d : =\di (\f \Lambda : \nabla \f d)  $ acting on functions $\f d \in \C^2(\Omega \times [0,T];\R^3)$.

Throughout this paper, let $\Omega \subset \R^3$ be a bounded domain of class $\C^{3,1}$.
We rely on the usual notation for spaces of continuous functions, Lebesgue and Sobolev spaces. Spaces of vector-valued or matrix-valued functions are  emphasized by bold letters, for example
$
\f L^p(\Omega) := L^p(\Omega; \R^3)$,
$\f W^{k,p}(\Omega) := W^{k,p}(\Omega; \R^3)$.
The standard inner product in $L^2 ( \Omega; \R^3)$ is just denoted by
$ (\cdot \, , \cdot )$, in $L^2 ( \Omega ; \R^{3\times 3 })$
by $(\cdot ; \cdot )$, and in $L^2 ( \Omega ; \R^{3\times 3\times 3 })$ by   $(\cdot \dreidotkom \cdot )$.

The space of smooth solenoidal functions with compact support is denoted by $\mathcal{C}_{c,\sigma}^\infty(\Omega;\R^3)$. By $\f L^p_{\sigma}( \Omega) $, $\V(\Omega)$,  and $ \f W^{1,p}_{0,\sigma}( \Omega)$, we denote the closure of $\mathcal{C}_{c,\sigma}^\infty(\Omega;\R^3)$ with respect to the norm of $\f L^p(\Omega) $, $ \f H^1( \Omega) $, and $ \f W^{1,p}(\Omega)$, respectively.

The dual space of a Banach space $V$ is always denoted by $ V^*$ and equipped with the standard norm; the duality pairing is denoted by $\langle\cdot, \cdot \rangle$. The duality pairing between $\f L^p(\Omega)$ and $\f L^q(\Omega)$ (with $1/p+1/q=1$), however, is denoted by $(\cdot , \cdot )$, $( \cdot ; \cdot )$, or $( \cdot \dreidotkom \cdot )$. 

The unit ball in d dimensions is denoted by $B_d:= \{ \f x \in \R^d ; | \f x | < 1\}$ and the sphere in $d$-dimensions by  $\Se^{d-1}:= \{ \f x \in \R^d ; | \f d |=1  \}$.

For $Q\subset \R^d$, the Radon measures are denoted by $\mathcal{M}(Q)$, the positive Radon measures by $\mathcal{M}^+(Q)$, and probability measures by $\mathcal{P}(Q)$. We recall that the Radon measures equipped with the total variation are a Banach space and  for compact sets $Q$ , it can be characterized by~$\mathcal{M}(Q) = ( \C(Q))^*$ (see~\cite[Theorem~4.10.1]{edwards}). 
The integration of a function $f\in \C(Q)$ with respect to a measure $\mu\in \mathcal{M}(Q)$ is denoted by $ \int_Qf(\f h ) \mu(\de \f h)\,.$ In case of the Lebesgue measure we just write 
$ \int_Qf(\f h ) \de \f h\,.$

The cross product of two vectors is denoted by $\times $. We introduce the notation $ \rot{\cdot}$, which is defined via
\begin{align}
\rot{\cdot } : \R^d \ra \R^{d\times d}\, , \quad \rot{ \f h} := \begin{pmatrix}
0& - \f h_3 &\f h_2\\
\f h_3 & 0 & - \f h_1 \\
- \f h_2 & \f h_1 & 0
\end{pmatrix}\, .
\end{align}
The $i$-th component of the vector $\f h\in \R^3$ is denoted by $\f h_i$. 
The mapping $\rot{\cdot}$ has some nice properties, for instance
\begin{align*}
\rot{\f a}\f b = \f a \times \f b \, ,\quad \rot{\f a} ^T \rot{\f b} = (\f a \cdot \f b) I - \f b \otimes \f a\, ,
\end{align*}
for all $\f a$, $\f b \in \R^3$, where $I$ denotes the identity matrix in $\R^{3\times 3}$ or 
\begin{align*}
 \quad \rot{\f a} : \nabla \f b = \rot{\f a} : \sk b = \f a \cdot \curl \f b \, , \quad \di \rot{ \f a} = - \curl \f a \, , \quad \frac{1}{2} \rot{\curl \f a} = \sk a \,,
\end{align*}
for all $ \f a, \f b \in \C^1(\ov \Omega)$.
Displaying the cross product by this matrix makes the operation associative. 

Additionally, we define $ \rott{\cdot } : \R^{3 \times 3} \ra \R^3 $, it is the left inverse of $\rot{\cdot} $ and given by
\begin{align*}
\rott{\f A} : = \begin{pmatrix}
\f A_{3,2} \\ \f A_{1,3} \\ \f A_{2,1}
\end{pmatrix}\,,\quad \text{for all } \f A \in \R^{3\times 3}\,.
\end{align*} 
For this mapping holds $ \rott{\rot{\f a}} = \f a $ and, thus $ 2 \rott{\sk{ a}}= \curl \f a$, for all $ \f a \in \C^1(\ov{ \Omega }; \R^3)$. 

We also use the Levi--Civita tensor $\f \Upsilon\in \R^{3\times 3\times 3}$. Let $\mathfrak{S}_3$ be the symmetric group of all permutations of $(1,2,3)$. The sign of  a given permutation  $\sigma \in \mathfrak{S}_3$ is denoted by $\sgn \sigma $.
The Tensor~$\f \Upsilon$ is defined via
\begin{align*}
\f \Upsilon_{ijk}:= \begin{cases}
\sgn{\sigma},  &  ( i ,j,k) = \sigma( 1,2,3)\text{ with } \sigma\in \mathfrak{S}_3 ,\\ 
0, & \text{ else}\, .
\end{cases}
\end{align*}
This tensor allows it two write the cross product as 
\begin{align*}
\f a \times \f b = \f \Upsilon :(\f a \otimes \f b) =\f \Upsilon _{ijk} \f a_j \f b _k\, ,\quad \text{for all }\f a , \f b \in \R^d \, 
\end{align*}
and the curl via
\begin{align*}
\curl \f d = \f \Upsilon : (\nabla \f d)^T =   \f\Upsilon_{ijk} \partial_j \f d_k \,, \quad\text{for all }\f d \in \C^1 ( \Omega)\, .
\end{align*}

For a given Banach space $ V$, Bochner--Lebesgue spaces are denoted  by $ L^p(0,T; V)$. Moreover,  $W^{1,p}(0,T; V)$ denotes the Banach space of abstract functions in $ L^p(0,T; V)$ whose weak time derivative exists and is again in $ L^p(0,T; V)$ (see also
Diestel and Uhl~\cite[Section~II.2]{diestel} or
Roub\'i\v{c}ek~\cite[Section~1.5]{roubicek} for more details).
By  $\C([0,T]; V) $, and $ \C_w([0,T]; V)$, we denote the spaces of abstract functions mapping $[0,T]$ into $V$ that are  continuous and continuous with respect to the weak topology in $V$, respectively.
We often omit the time interval $(0,T)$ and the domain $\Omega$ and just write, e.g., $L^p(\f W^{k,p})$ for brevity.

Finally by $c>0$, we denote a generic positive constant.

\section{Model\label{sec:model}}
\subsection{Governing equations}
Let $ \Omega$ be of class $\C^{3,1}$.
We consider the Ericksen--Leslie model as introduced in~\cite{masswertig}. 
 The governing equations  read as
\begin{subequations}\label{eq:strong}
\begin{align}
\t {\f v}  + ( \f v \cdot \nabla ) \f v + \nabla p + \di \f T^E- \di  \f T^L&= \f g, \label{nav}\\
\f d \times \left (\t {\f d }+ ( \f v \cdot \nabla ) \f d -\sk{v}\f d + \lambda \sy{v} \f d + \f q\right ) & =0,\label{dir}\\
\di \f v & = 0,
\\
| \f d |&=1.
\end{align}%
\end{subequations}

We recall that $\f v : \ov{\Omega}\times [0,T] \ra \R^3$ denotes the velocity  of the fluid, $\f d:\ov{\Omega}\times[0,T]\ra \R^3$ represents the orientation of the rod-like molecules, and $p:\ov{\Omega}\times [0,T] \ra\R$ denotes the pressure.
The Helmholtz free energy potential~$F$, which is described rigorously in the next section, is assumed to depend only on the director and its gradient, $F= F( \f d, \nabla \f d)$.
The free energy functional~$\mathcal{F}$  is defined by
\begin{align*}
\mathcal{F}: \He \ra \R , \quad \mathcal{F}(\f d):= \int_{\Omega} F( \f d, \nabla \f d) \de \f x \,,
\end{align*}
and $\f q$ is its variational derivative (see Furihata and Matsuo~\cite[Section 2.1]{furihata}),
\begin{subequations}\label{abkuerzungen}
\begin{align}\label{qdefq}
\f q :=\frac{\delta \mathcal{F}}{\delta \f d}(\f d) =  \pat{F}{\f d}(\f d , \nabla\f d)-\di \pat{F}{\nabla \f d}(\f d, \nabla \f d)\, .
\end{align}
The Ericksen stress tensor $\f T^E$ is given by
\begin{equation}
\f T^E = \nabla \f d^T \pat{F}{\nabla \f d}( \f d , \nabla\f d ) \, .\label{Erik}
\end{equation}
The Leslie stress tensor is given by
\begin{align}
\begin{split}
\f T^L ={}&  \mu_1 (\f d \cdot \sy{v}\f d )\f d \otimes \f d +\mu_4 \sy{v}
 + {(\mu_5+\mu_6)} \left (  \f d \otimes\sy{v}\f d \right )_{\sym}
\\
& +{(\mu_2+\mu_3)} \left (\f d \otimes \f e  \right )_{\sym}
 +\lambda \left ( \f d \otimes \sy{v}\f d  \right )_{\skw} + \left (\f d \otimes \f e  \right )_{\skw}\, ,
\end{split}\label{Leslie}
\end{align}
where
\begin{align}
\f e : = \t {\f d} + ( \f v \cdot \nabla ) \f d - \sk v\f d\, .\label{e}
\end{align}

We emphasis that Parodi's law is always assumed 
\begin{equation}
 \lambda = \mu_2 + \mu _3\,.\label{parodi}
\end{equation}
It follows from Onsager's reciprocal relation and is essential to prove the energy inequality~\eqref{energyin}. 
\begin{remark}
In our previous work~\cite{weakstrong}, we did not assume Parosi's relation. We rather assumed the energy inequality~\eqref{energyin} to hold even in the absence of Parodi's relation, which we cannot prove. The same strategy could be used in this paper, but for the sake of simplicity, we concentrate on the case of Parodi's relation. In addition, this seems appropriate since the energy inequality is essential for our analysis
\end{remark}

To ensure the dissipative character of the system, we assume that
\begin{align}
\begin{gathered}
 \mu_4 > 0, \quad (\mu_5+\mu_6)- \lambda ^2>0,\quad  \mu_1 >0 
\,.
\end{gathered}\label{con}
\end{align}
\end{subequations}
Finally, we impose boundary and initial conditions as follows:
\begin{subequations}\label{anfang}
\begin{align}
\f v(\f x, 0) &= \f v_0 (\f x) \quad\text{for } \f x \in \Omega ,
&\f v (  \f x, t ) &= \f 0  &\text{for }( t,  \f x ) \in [0,T] \times \partial \Omega , \\
\f d (  \f x, 0 ) & = \f d_0 ( \f x) \quad\text{for } \f x \in \Omega ,
&\f d (  \f x ,t ) & = \f d_1 ( \f x )  &\text{for }( t,  \f x ) \in [0,T] \times \partial \Omega .
\end{align}
\end{subequations}
We always assume that $\f d_1= \f d_0$ on $\partial \Omega$, which is a compatibility condition providing regularity.

\subsection{The general Oseen--Frank energy}
The \textit{Oseen--Frank} energy is given by~(see~Leslie~\cite{leslie}) \
\begin{align*}
F(\f d , \nabla \f d) := \frac{K_1}{2} (\di \f d )^2 +\frac{K_2}{2}( \f d \cdot \curl \f d )^2  + \frac{K_3}{2} |\f d \times \curl \f d|^2 \,,
\end{align*}
where $K_1,K_2,K_3>0$.
This energy can be reformulated using the norm  restriction, $|\f d|=1$, to
\begin{align}
\begin{split}
 F( \f d , \nabla \f d)&:= \frac{k_1}{2} ( \di \f d) ^2 + \frac{ k_2}{2} | \curl \f d |^2 + \frac{k_3}{2} | \f d |^2 ( \di \f d )^2 +   \frac{k_4}{2} ( \f d\cdot \curl \f d )^2  +  \frac{k_5}{2} | \f d \times \curl \f d |^2 \, .
\end{split} \label{frei}
\end{align} 
where $ k_1=k_3=K_1/2$, $k_2={\min\{K_2,K_3\}}/{2}$, $k_4 = K_2-k_2$, and $ k_5 =K_3-k_2$ are again positive constants.
We remark that $| \f d |^2| \curl \f d |^2 = ( \f d \cdot \curl \f d )^2 + | \f d \times \curl \f d |^2 $ for all $\f d \in \C^1 ( \Omega ; \R^3)$.

We introduce short notations for the derivatives of the free energy~\eqref{frei} with respect to $\nabla \f d$ and $ \f d$. The free energy~\eqref{frei} can be seen as a function $F: \R^3 \times \R^{3\times 3}\ra \R $, where we replace $\f d$ in definition~\eqref{frei} by $\f h\in \R^3$ and $\nabla \f d $ by $ \f S\in \R^{3\times 3 }$. By  an easy vector calculation, we find
\begin{align*}
2 F( \f h, \f S)  &=  k_1 \tr (\f S) ^2 + k_2 | ( \f S)_{\skw}|^2  + k_3 | \f h |^2 \tr ( \f S)^2  + k_4 ( \rot{ \f h } : ( \f S) _{\skw})^2 + 
4k_5| (\f S)_{\skw} \f  h|^2 \, ,
\end{align*}
see Section~\ref{sec:not} for the definition of the matrix $\rot{\cdot}$.

We abbreviate the derivative of $F$ with respect to $\f h$ by $F_{\f h}$ and the derivative with respect to $\f S$ by $F_{\f S}$, where
\begin{align*}
F_{\f S} : \R^3 \times \R^{3\times 3 } \ra \R^{3\times 3 } \, , \quad \text{and  } F_{\f h} : \R^3 \times \R^{3\times 3 } \ra \R^3 \, .
\end{align*}
These derivatives are given by
\begin{align}
\begin{split}
F_{\f S}(\f h ,\f S) & =   k _1 \tr (\f S) I + k_2 (\f S)_{\skw}  + k_3 \tr( \f S) | \f h|^2 I  +  k_4  \rot{ \f h}( \rot{\f h} :(\f S)_{\skw})
 + 4 k_5 ((\f S)_{\skw}\f h \otimes \f h ) _{\skw}\,, \\
F_{\f h} ( \f h , \f S) &= k_3 \tr(\f S)^2 \f h + 2 k_4 ( \rot{\f h} :(\f S)_{\skw}) \rott{( \f S)_{\skw}} + 4 k_5 ( \f S)^T_{\skw}(\f S) _{\skw} \f h 
\, ,
\end{split}\label{FSFh}
\end{align}
see Section~\ref{sec:not} for the definition of $\rott{\cdot}$.

To abbreviate, we define the tensor of order 4, $\f \Lambda \in \R^{3^4}$ and a tensor of order 6, $ \f \Theta \in \R^{3^6}$ via
\begin{align}
\f \Lambda_{ijkl} : ={}& k_1 \f \delta_{ij} \f \delta_{kl} + k_2 ( \f \delta_{ik}\f \delta_{jl}-\f\delta_{il}\f\delta_{jk})\,,\label{Lambda}
\intertext{and}
\f \Theta_{ijklmn} :={}&k_3 \f \delta_{ij}\f \delta_{lm}\f \delta_{kn} 
%
%
+ k_5  \left (  \f \delta_{il}\f \delta_{mn}\f \delta_{jk} - \f \delta_{mi}\f \delta_{ln}\f \delta_{jk} - \f \delta_{lj}\f \delta_{mn}\f \delta_{ik} + \f \delta_{jm}\f \delta_{ln}\f \delta_{ik}  \right )\notag
\\
& 
+k_4 \left (  \f \delta_{kn}\f \delta_{jm}\f \delta_{il} + \f \delta_{km}\f \delta_{jl}\f \delta_{in} + \f \delta_{kl}\f \delta_{jn}\f \delta_{im} - \f \delta_{kn}\f \delta_{jl}\f \delta_{im}- \f \delta_{km}\f \delta_{jn}\f \delta_{il} - \f \delta_{kl}\f \delta_{jm}\f \delta_{in}  \right )
\,, \label{ThetaOF}
\end{align}
respectively.
The free energy can be written as 
\begin{align}
2 F(\f d, \nabla \f d ) = \nabla \f d : \f \Lambda : \nabla \f d +  \nabla \f d \otimes  \f d  \dreidots \f \Theta \dreidots  \nabla \f d \otimes \f d  \,. \label{tensoren}
\end{align}
The partial derivatives~\eqref{FSFh} inserted in definition~\eqref{qdefq} 
provides the variational derivative in the case of the Oseen--Frank energy via
\begin{align}
\begin{split}
\f q ={}&- k_1 \nabla \di \f d + k_2 \curl \curl \f d - k_3\nabla (\di \f d | \f d|^2) - k_4 \di \left ( \rot{\f d} ( \f d \cdot \curl \f d ) \right ) - 4 k_5 \di \left (  ( \nabla \f d)_{\skw} \f d \o \f d\right )_{\skw}\\& + k_3 (\di \f d)^2 \f d  + k_4 ( \f d \cdot \curl \f d) \curl \f  d + 4 k_5  ( \nabla \f d)_{\skw}^T ( \nabla \f d)_{\skw}\f d
\\={}&-  \Lap\f d - \di \left ( \f d \cdot \f \Theta \dreidots \nabla \f d \o \f d \right ) + \nabla \f d : \f \Theta \dreidots \nabla \f d \o \f d
 \,.
\end{split}
\label{qdef}
\end{align}
 
The Tensor $\f \Lambda$ is strongly elliptic, i.e.~there is a $\eta>0$ such that $ \f a \otimes  \f b : \f \Lambda : \f a \otimes \f b   \geq \eta | \f a|^2 | \f b|^2 $ for all $\f a, \f b \in \R^3$. Indeed, it holds 
\begin{align}\label{ellip}
 \f a \otimes  \f b : \f \Lambda : \f a \otimes \f b  = k_1 (\f a \cdot \f b)^2 + k_2 ( | \f a |^2 | \f b|^2-( \f a \cdot \f b )^2 ) \geq \min\{k_1,k_2\} | \f a |^2 | \f b|^2\,.
\end{align}
The second order differential operator $-\Lap$ introduced by a strongly elliptic tensor is coercive on $\Hb$, i.e., there exists a constant $k>0$ such that 
\begin{align}\label{kill}
k\| \nabla \f d \|_{\Le}^2 \leq \frac{ 1}{2} \left ( \nabla \f d ; \f \Lambda : \nabla \f d\right )  \,
\end{align}
holds for all $\f d \in \He$.

\section{Generalized solvability concepts and main result \label{sec:rel}}
\subsection{Dissipative solutions}
The concept of dissipative solutions heavily relies on the formulation of an appropriate relative energy for the Oseen--Frank energy. This relative energy serves as a natural comparing tool for two different solutions $(\f v , \f d)$ and $(\vv,\dd)$. 
The relative energy is defined by 
\begin{align}
\begin{split}
\mathcal{E}(t) :
={}& \frac{1}{2} \left \| \f v(t) - \vv(t) \right \| _{\Le}^2 +  \frac{1}{2}\left ( \nabla \f d(t) - \nabla \dd( t) : \f \Lambda : (\nabla \f  d(t)- \nabla \dd( t)) \right ) \\& +\frac{1}{2} \Big ( \nabla\f d (t)\o \f d(t) - \nabla \dd( t) \o \dd( t)  \dreidotkom \f \Theta  \dreidots (\nabla \f d(t) \o \f d (t) 
 - \nabla \dd( t) \o \dd( t) )\Big ) 
  \,
  \end{split}
  \label{relEnergy}
\end{align}
and the relative dissipation by
\begin{align}
\begin{split}
\mathcal{W}(t) : ={}& (\mu_1+\lambda^2) \| \f d(t) \cdot ( \nabla \f v (t))_{\sym} \f d(t) - \dd(t) \cdot ( \nabla \vv(t))_{\sym } \dd (t)\|_{\Le} ^2\\
&+ ( \mu_5 + \mu_6 -\lambda^2 ) \| ( \nabla \f v(t) )_{\sym} \f d(t) - ( \nabla \vv(t))_{\sym} \dd(t)\|_{\Le}^2\\& + \mu_4 \| ( \nabla \f v(t))_{\sym} - ( \nabla \vv(t))_{\sym} \|_{\Le}^2  +  \| \f d(t) \times \f q(t) - \dd(t) \times \tq(t) \|_{\Le}^2 \, . 
\end{split}
\label{relW}
\end{align}
Inserting the definitions of the tensors $\f \Lambda $ and $\f \Theta $,
the relative energy can be expressed via
\begin{align*}
\mathcal{E}(t)={}&  + 
 \frac{k_1}{2}\|\di \f d (t) - \di  \dd( t) \|_{L^2}^2   +  {k_2} \| (\nabla \f d(t))_{\skw} - ( \nabla\dd(t))_{\skw} \|_{\Le}^2 \notag \\&+\frac{k_3}{2}  \| (\di \f d (t)) \f d(t) - (\di \dd(t)) \dd(t) \|_{L^2}^2 + \frac{k_4}{2}\| \f d (t) \cdot \curl\f d(t)  - \dd (t) \cdot \curl\dd(t)  \|_{L^2}^2    \notag
 \\
  &+  2 k_5  \| (\nabla \f d (t))_{\skw} \f d(t) - ( \nabla\dd(t))_{\skw} \dd (t)\|_{\Le}^2 + \frac{1}{2} \left \| \f v(t) - \vv(t) \right \| _{\Le}^2\,.
\end{align*}
We always assume that $(\f v , \f d)$ and $(\vv,\dd)$ fulfill the regularity requirements~\eqref{reldiss} below such that~\eqref{relEnergy} and~\eqref{relW} are well defined. 
In the following $(\vv, \dd)$ are even assumed to fulfill~\eqref{regtest} below.
%

\begin{definition}[Dissipative solution]\label{def:diss}
The triple $(\f v , \f d , \f q)$ consisting of the velocity field $\f v $, the director field $\f d$ and the variational derivative $\f q$ is  said to be a dissipative  solution to~\eqref{eq:strong}--\eqref{frei} if
\begin{subequations}\label{reldiss}
\begin{align}
\f v &\in \C_w(0,T;\Ha)\cap  L^2(0,T;\V)\,,
\\ \f d& \in \C_w(0,T;\He) \text{ with } | \f d (\f x ,t)| =1 \text{ a.\,e.~in $\Omega\times (0,T)$}\,,\\
\f d \times \f q & \in L^2(0,T; \Le)\, 
\end{align}
\end{subequations}
and if 
\begin{align}
\begin{split}
\frac{1}{2}\mathcal{E} (t) + {}&\frac{1}{2}\int_0^t\mathcal{W}(s) \exp\left ({\int_s^t\mathcal{K}(\tau)\de \tau }\right )\de s\\ \leq{}&  \Big ( \mathcal{E}(0)+\frac{|\f \Theta|^2}{2 k}\|\nabla \dd \o \dd \|_{L^\infty (L^\infty)} ^2 \left \|   \f d(0)- \dd(0)    \right \|_{\Le}^2 \Big) \exp\left ({\int_0^t\mathcal{K}(s)\de s } \right )
\\
& +\Big ((\nabla \f d(0)-\nabla \dd (0))\o (\f d(0)-  \dd(0)) \dreidotkom \f \Theta \dreidots \nabla \dd (0)\o \dd(0) \Big) \exp\left ({\int_0^t\mathcal{K}(s)\de s }\right ) 
\\&
+ \int_0^t \left ( \mathcal{A}(s) , 
\begin{pmatrix}
\vv -\f v \\
\f d \times (\tq -   \f q  )  +  \frac{|\f \Theta|^2}{k}\|\nabla \dd \o \dd \|_{L^\infty(\f L^\infty)} ^2   \f d   \times \dd
\end{pmatrix}\right )
\exp\left ({\int_s^t\mathcal{K}(\tau)\de \tau }\right )\de s  
\, 
\end{split}\label{relenin}
\end{align} 
for all  test functions $(\vv , \dd)$ with  
\begin{align}
\begin{split}
\vv &\in L^\infty(0,T; \Ha)\cap L^2(0,T; \f L^\infty)\cap  L^2 (0,T; \f W^{1,3}_{0,\sigma})\cap L^1(0,T; \f W^{1,\infty})\cap W^{1,2}(0,T;\Vd)\\
\dd &\in    L^\infty(0,T;\f W^{1,\infty
}) \cap  L^2( 0,T ; \f W^{2,3}) \cap  L^4( 0,T ; \f W^{1,6})\cap W^{1,1 }(0,T;  \f W^{1,3}\cap \f L^\infty ) \text{ with }| \dd| =1 \text{ a.e. in }\Omega\times (0,T)\,.
\end{split}\label{regtest}
\end{align}
as well as  
\begin{align}
&\intte{\left ( \f d (t) \times\left ( \t \f d(t)+( \f v(t)\cdot \nabla ) \f d(t) -  \sk{v(t)}\f d(t) + \lambda \sy{v(t)} + \f q(t)\right ), \f \zeta(t)\right )}=0
\label{eq:mdir}
\end{align}
for $\f \zeta \in L^2(0,T; \f L^3)$   and 
$\dd$ fulfilling inhomogeneous boundary conditions such that $\tr(\dd)=\f d _1 $ on $\partial \Omega$.
%

The potential $\mathcal{K}$ is given by 
\begin{align*}
\mathcal{K}(s) ={}& C  \left (  \|\vv(s)\|_{\f L^\infty}^2+ \| \vv(s)\|_{\f W^{1,3}}^2 + \| \nabla^2 \dd\|_{\f L^{3}}^2 + \| \nabla \dd (s)\|_{\f L^{6}}^4 + \| \t \dd(s)\|_{\f L^\infty} + \| \t \dd(s)\|_{\f W^{1,3}} \right ) \\ & +C  \left ( \|(\nabla\vv(s))_{\sym}\|_{\f L^\infty}+\| \nabla \dd \o \dd \|_{L^\infty(\f L^\infty)}^4 \|\f d (s) \times \dd ( s)\|_{\f L^3}^2 +1  \right )\,,
\end{align*}
where $C$ is a possible large constant depending on the norms $\| \f d \|_{L^\infty(\f L^\infty)} $, $\| \dd \|_{L^\infty(\f L^{\infty})}$, which are just $1$ and the norm $\|\nabla \dd \|_{L^\infty (\f L^3)}$. The constant $k$ is given in~\eqref{kill}. It is obvious that $\mathcal{K}$ is bounded in $L^1(0,T)$ due to the regularity assumptions~\eqref{regtest}. 


The operator $\mathcal{A}$ incorporates the classical formulation~\eqref{eq:strong} evaluated at the test functions $(\vv ,\dd)$
\begin{align}
\mathcal{A}(t) = \begin{pmatrix}
 \t {\vv(t)}  + ( \vv(t) \cdot \nabla ) \vv(t) + \di \tilde{\f T}^E(t)- \di  \tilde{\f T}^L(t)- \f g(t)\\
\dd(t) \times \left (\t {\dd (t)}+ ( \vv(t) \cdot \nabla ) \dd(t) -(\nabla \dd(t))_{\skw} \dd(t) + \lambda (\nabla \dd(t))_{\sym} \dd(t) + \tq(t)\right ) 
\end{pmatrix}\label{A}
\end{align}
\end{definition}
\begin{remark}[Weak-strong uniqueness]\label{rem:weakstrong}
 The weak-strong uniqueness of dissipative solutions immediately follows from the definition. Indeed, if there exists a classical solution $(\vv ,\dd)$ fulfilling the initial conditions and equations in a strong sense and admitting the regularity~\eqref{regtest}, then it must be the unique solution. 
 Since $\mathcal{A}$ vanishes for a strong solution, the right-hand side of~\eqref{relenin} vanishes for all functions fulfilling the same initial conditions. 
 Note that it is sufficient if $(\vv ,\dd)$ is a weak solution of~\eqref{eq:strong} such that the equations~\eqref{nav} and \eqref{dir} are fulfilled in $L^2(0,T;\Vd)$ and $L^2(0,T;\Le)$, respectively. Then the formulation of the relative energy inequality~\eqref{relenin} especially~\eqref{A} is already well defined. 
\end{remark}
\begin{remark}[Stability of the relative energy inequality]
The inequality~\eqref{relenin} is stable under the convergence in the spaces~\eqref{reldiss}. 
Indeed, consider a sequence of functions $\{(\fn v , \fn d ,\fn d \times \fn q)\}$ converging to $(\f v ,\f d, \f d \times \f q)$, with
\begin{subequations}\label{remconv}
\begin{align}
\fn v & \ra \f v&\text{ in }  \C_w(0,T;\Ha) &&&&\qquad &\fn v \rightharpoonup \f v & &\text{ in } L^2(0,T;\V)\\
\fn d & \ra \f d & \text{ in } \C_w(0,T;\He)&&&& \qquad &\fn d \times \fn q \rightharpoonup \f d \times \f q &&\text{ in } L^2(0,T;\Le)
\end{align}
\end{subequations}
and $| \fn d(\f x ,t)  | = 1 $ for a.e. $(\f x ,t) \in \Omega\times (0,T)$.  
Due to the weakly-lower semi-continuity of the appearing norms, we may infer $\| \f v (t)\|_{\Le}^2 \leq \liminf_{n\in \N} \| \fn v(t)\|_{\Le}^2 $ as well as $ \left ( \nabla \f d (t) ; \f \Lambda : \nabla \f d(t) \right ) \leq \liminf_{n\in \N} \left ( \nabla \fn d (t) ; \f \Lambda : \nabla \fn d(t) \right )$. 
With $\fn d \ra \f d$ in $  \C_w(0,T;\He)$ and the compact embedding in three dimensions, there exists a strongly converging subsequence such that $\fn d \ra \f d $ in $ \C([0,T];\f L^5)$ implying that $ \f \Theta  \dreidots \nabla \fn d \o \fn d \ra \f \Theta  \dreidots \nabla \f d \o \f d$ in $\C_w([0,T]; \f L^{\nicefrac{10}{7}})$ with the boundedness in $L^\infty(0,T; \Le)$ one obtains with~\cite[Page~297]{magnes} even the convergence in $\C_w([0,T]; \Le)$. Together we observe that $\mathcal{E}$ is weakly-lower semi-continuous with respect to the convergence in~\eqref{reldiss}. Due to the boundedness of the director and the weak convergences on the right-hand side of~\eqref{remconv}, the realtive dissipation~\eqref{relW} is also weakly-lower semi-continuous.  
\end{remark}
%

\begin{remark}[Other dissipative solutions]\label{rem:feireisl}
The concept of dissipative solutions for the Navier--Stokes--Fourier system introduced by Feireisl~\cite{feireislsingular} differs from the solution concept of Definition~\ref{def:diss}. 
To get formulation~\eqref{relenin}, we already applied Gronwall's estimate, where in the case of the relative energy inequality for the Navier--Stokes--Fourier system, the right-hand side remains untouched and  Gronwall's argument is applied during the proof of the weak-strong uniqueness~\cite{novotny}.  

In the case of the Ericksne--Leslie system equipped with the Oseen--Frank energy, this would still lead to a  measure-valued formulation.   

Note that a solvability concept relying on an inequality is well known in the context of Gradient flows (see for instance~\cite[Proposition~23.1]{opttrans}). For a vanishing velocity field, the Gradient flow of the Oseen--Frank energy can be formulated by an upper energy dissipation estimates (see~\cite[Theorem~3.2]{mielke}) which corresponds to the dissipative formulation\eqref{def:diss} for $\f v = \vv = \dd = 0$.
Indeed, with $\f v \equiv 0$, we may infer from~\eqref{eq:mdir} that $\f d \times \t \f d + \f d \times \f q = 0$. The relative energy inequality~\eqref{relenin} simplifies to 
\begin{align}
\mathcal{F}(\f d(t) ) + \int_0^t \left (\frac{1}{2} \| \f d \times \t \f d \|_{\Le}^2 + \frac{1}{2} \| \f d \times \f q \|_{\Le}^2\right )  \de s = \mathcal{F}(\f d_0) \label{uneq}
\end{align}
where we used that $1/2  \| \f d \times \f q \|_{\Le}^2 = 1/2 \| \f d \times \t \f d \|_{\Le}^2 $.  The inequality~\eqref{uneq} corresponds to the upper energy dissipation estimate (see~\cite[Theorem~3.2]{mielke}). 
\end{remark}

In the next section, we introduce the concept of measure-valued solutions. The proof of global existence of measure-valued solutions to the Ericksen--Leslie model equipped with the Oseen--Frank energy is executed by the author in~\cite{masswertig} and the weak-strong uniqueness of these solutions is proven in~\cite{weakstrong}. We also refer to~\cite{masswertig} for a more extensive introduction into the concept of generalized gradient Young measures.
\subsection{Measure-valued solutions}
\begin{definition}[measure-valued solutions]\label{def:meas}
The tupel $( ( \f v ,\f d ) , ( \nu^o,m , \nu^\infty) , ( \mu , \nu^\mu))$ consisting of the pair $(\f v , \f d)$ of velocity field $\f v$ and director field  $\f d$, the generalized gradient Young measure $( \nu^o,m , \nu^\infty) $ and the defect measure $(\mu , \nu^\mu)$ (see below)  is  said to be a measure-valued solution to~\eqref{eq:strong} if
\begin{align}
\begin{split}
\f v &\in L^\infty(0,T;\Ha)\cap  L^2(0,T;\V)
\cap W^{1,2}(0,T; ( \f W^{1,3}_{0,\sigma})^*),
\\ \f d& \in L^\infty(0,T;\He)\cap   W^{1,2}  (0,T;  \f  L^{\nicefrac{3}{2}} ),\\
\{\nu^o _{(\f x,t)}\}&  \subset \mathcal{P} ( \R^{3\times 3})\,, \text{ a.\,e.~in $\Omega\times (0,T)$} \, ,\\
 \{m_t\} &\subset \mathcal{M}^+(\ov\Omega)\,,\text{ a.\,e.~in $ (0,T)$} \, ,  \\
 \{\nu^\infty _{(\f x,t)}\} &\subset \mathcal{P}(\ov B_3\times \Se^{3^2-1})\,,\text{ $m_t $-a.\,e.~in }\ov\Omega \text{ and a.\,e.~in }   (0,T)\, ,\\
 \{\mu_t\} &\subset \mathcal{M}^+(\ov\Omega)\,,\text{ a.\,e.~in $ (0,T)$} \, ,  \\
\{ \nu^{\mu}_{(\f x ,t)}\}&\subset \mathcal{P}(\Se^{3^3-1})\,, \text{ $\mu_t$-a.\,e.~in }\ov\Omega \text{ and a.\,e.~in }   (0,T)\, 
\end{split}\label{measreg}
\end{align}
and if
\begin{subequations}\label{meas}
\begin{align}
\begin{split}
\int_0^T (\t \f v(t), \f \varphi(t)) \de t + \int_0^T ((\f v(t)\cdot \nabla) \f v(t), \f \varphi(t)) \de t  - \intte{ \ll{\nu_t,\f S^T F_{\f S}(\f h, \f S):\nabla \f \varphi(t)   }  }&
\\-2 \int_0^T \ll{\mu_t, \f \Gamma \dreidots (\f \Gamma\cdot \nabla\f \varphi(t))}\de t + \intte{(\f T^L(t): \nabla \f \varphi(t) ) } &={}\intte{ \left \langle \f g (t),\f \varphi(t)\right \rangle }\, ,
\end{split}\label{eq:velo}
\intertext{as well as~\eqref{eq:mdir} with }
\begin{split}
 \intte{ \ll{\nu_t, \f \Upsilon :\left (\f  S    (F_{\f S}(\f h, \f S))^T\right ) \cdot\f \psi(t)   }}+\intte{ \ll{\nu_t, \left (\f h \times F_{\f h}(\f h, \f S)\right ) \cdot\f \psi(t)   }}&
\\
+ \intte{(\rot{\f d(t)}  F_{ \f S} ( \f d(t) , \nabla \f d (t)) ; \nabla \f \psi(t) )}& 
 ={}\intte{\left ( \f d (t) \times \f q(t) , \f \psi(t)\right )}  \,, 
 \end{split}
\label{eq:q}
\end{align}%
\end{subequations}
holds for all $ \f \varphi \in \mathcal{C}_c^\infty(\Omega \times ( 0,T);\R^3))$ with $ \di \f \varphi =0$, $\f \zeta \in L^2(0,T;\Le)$ and $ \f \psi \in  \mathcal{C}_c^\infty(\Omega \times ( 0,T);\R^3))$, respectively.
Additionally, the norm restriction of the director holds, i.\,e. $|\f d (\f x,t)|=1$ for a.\,e.~$(\f x, t)\in \Omega\times (0,T)$, the oscillation measure of a linear function is the gradient of the director
\begin{align}
  \int_{\R^{3\times 3} } \f S  \nu^o_{(\f x, t)} ( \de \f S)   = \nabla \f d(\f x,t) \, \quad \text{for a.e. } (\f x ,t) \in \Omega\times (0,T)\,,\label{identify}
\end{align}
and the
initial conditions $( \f v_0, \f d_0)\in \Ha \times \Hc$ with $ \f d_0 \in \Hrand{7}$ shall be fulfilled in the weak sense and the boundary conditions in the sense of the trace.
We remark that the trace is well defined for the function $\f d \in L^\infty(0,T;\He)$, which is the expected value of the oscillation measure $\nu^o$. 

The dual pairings are defined as
\begin{align*}
 \ll{\mu_t,f} :={}& \int_{\ov \Omega} \int_{\Se ^{3^3-1}}  f(\f x,t,\f \Gamma) \nu^\mu_{(\f x ,t)} ( \de \f \Gamma) \mu_t(\de \f x )\,\intertext{ for $f \in \C(\Se^{3^3-1};\R) $  and}
\ll{\nu_t, f } :={}& 
 \int_{\Omega} \int_{\R^{3\times 3} } f(\f x, t,\f d(\f x, t), \f S)  \nu^o_{(\f x, t)} ( \de \f S)\de \f x 
 + \int_{\ov\Omega}\int_{\Se^{3^2-1} \times \ov B_3} \tilde{f} (\f x, t, \tilde{\f h} , \tilde{\f S}) \nu_{(\f x, t)}^\infty ( \de \tilde{\f S}, \de \tilde{\f h}) m_t (\de \f x)
\end{align*}
for $f\in \mathcal{R}$ (see~\eqref{transi} below).
\end{definition}
We refer to the section~\ref{sec:not} for the definition of the tensor $\f \Upsilon$ and to \eqref{transi} for the definition of the transformed function~$\tilde{f}$.
\begin{remark}
We often abuse  the notation by writing $ \ll{\nu_t , f(\f h , \f S)}$. Thereby, we mean the generalized Young measure applied to the continuous function $(\f h, \f S)\mapsto f (\f h ,\f S)$. 
\end{remark}
The transformed function $\tilde{f}:\ov \Omega \times [0,T]   \times  B_3 \times B_{3 \times 3}\ra \R$, the so-called recession function is given by
\begin{align}
\tilde{f} ( {\f x },t, \tilde{\f h} ,\tilde{\f  S} ) : = 
f ( \f x ,t, \frac{\tilde{\f h}}{\sqrt{1-|\tilde{\f h}|^2}}, \frac{\tilde{\f S}}{\sqrt{1-|\tilde{\f S}|^2}}))  ( 1-|\tilde{ \f h}|^2  )( 1- | \tilde{\f S}|^2) \,. \label{transi}
\end{align}
The class of functions for which the above representation is valid are those functions, $f\in \C(\ov \Omega  \times \R^3 \times \R^{3\times 3})$ such that $\tilde{f}$ admits a continuous extension on the closure of its domain
\begin{align*}
\mathcal{F}:= \Big \{ f \in \C ( \ov \Omega \times [0,T] \times \R^3 \times \R^{3\times 3 }  ) |& \exists \tilde{ g}\in \C(\ov \Omega \times [0,T] \times \ov B_3\times \ov B_{3\times 3}\, ;
\, \tilde{f}= \tilde{g}\text{ on }\ov \Omega \times [0,T] \times B_d \times B_{d\times d}  	\Big \}\,.
\end{align*} 
In comparison to weak solutions (see~\cite{unsere}) the Ericksen stress $\f T^E$ and the variational derivative $\f d \times \f q$ are in this measure-valued formulation represented by generalized Young measures. 
A \textbf{generalized Young measure} on $\ov \Omega \times [0,T]$ with values in $\R^d \times \R^{d\times d}$ is a triple $( \nu_{(\f x,t)}^o , m_t , \nu_{(\f x,t)}^\infty) $ consisting of  
\begin{itemize}
\item a parametrized family of probability measures $\{\nu_{(\f x,t)}^o\} \subset \mathcal{P}(\R^{d\times d})$ for a.\,e. $(\f x,t)  \in \Omega \times (0,T)$,
\item a positive measure $\{m_t\}\subset \M^+(\ov \Omega)$, for a.\,e. $t\in (0,T)$ and
\item a parametrized family of probability measures $\{ \nu_{(\f x,t)}^\infty\}\subset \mathcal{P}( \ov B_d \times \Se^{d^2-1})$, for $m_t$-a.\,e.~$\f x  \in \ov\Omega $ and a.\,e.~$t\in(0,T)$. 
\end{itemize}
As in~\cite[page 552]{rindler} we call $\nu^o$ \textit{oscillation measure}, $m_t$ \textit{concentration measure} and $\nu^\infty$ the \textit{concentration angle measure}. 

%
In the case of the Ericksen stress, an additional defect measure is of need to describe the limit of the regularised system we considered in~\cite{unsere}.
A \textbf{defect measure} on $\ov \Omega \times [0,T]$ with values in $ \R^{d\times d\times d }$
is a pair $(\mu_t, \nu^\mu )$ consisting of 
\begin{itemize}
\item a positive measure $\mu_t\in\M^+(\ov\Omega)$, for a.\,e. $t\in (0,T)$ and
\item a parametrized family of probability measures $\{ \nu_y^\mu\}_{y\in \ov Q} \in \mathcal{P}(  \Se^{d^3-1})$, for $\mu_t$-a.\,e.~$\f x \in \ov \Omega$ and a.\,e.~$t\in(0,T)$. 
\end{itemize}
We refer again to~\cite{masswertig} for more details on the convergence in the sense of generalized Young measures.

\begin{definition}[Suitable measure-valued solutions]
A measure-valued solution is said to be a suitable measure-valued solution if it fulfills Definition~\ref{def:meas} and additionally the energy inequality
\begin{align}
\begin{split}
 &\frac{1}{2}\|\f v (t)\|_{\Le}^2 + \ll{\nu_t, F} + \frac{1}{2}\ll{\mu_t , 1 }   + \inttet{(\mu_1+\lambda^2)\|\f d\cdot \sy{v}\f d\|_{L^2}^2 }  \\
& +\inttet{ \left [
  \mu_4 \|\sy{v}\|_{\Le}^2+( \mu_5+\mu_6-\lambda^2)\|\sy{v}\f d\|_{\Le}^2  +  \|\f d \times \f q\|_{\Le}^2\right ]}
\\
& \qquad\qquad \leq  \left ( \frac{1}{2}\|\f v_0 \|_{\Le}^2 + \F( \f d_0)\right )
 + \inttet{\left [\langle \f g , \f v \rangle 
 \right ]}\, .
\end{split}
\label{energyin}
\end{align}
a.e. in $(0,T)$.

\end{definition}
\begin{theorem}\label{thm:main}
 Let $((\f v , \f d), ( \nu^o,m , \nu^\infty) , ( \mu , \nu^\mu))$ be a suitable measure-valued solution to the Ericksen--Leslie model according to Definition~\ref{def:meas}.
Then the solutions $(\f v , \f d)$, with $\nabla \f d(\f x ,t) = \mathbb{E}(\nu^o_{(\f x ,t)})= \int_{\R^{3\times 3}} \f S \nu^o_{(\f x ,t ) } (\de \f S) $ and $\f d \times  \f q $ given in~\eqref{eq:q} is a dissipative solution to the Ericksen--Leslie model according to Definition~\ref{def:diss}.
 \end{theorem}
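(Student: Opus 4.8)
The plan is to run a relative-energy argument in the spirit of~\cite{weakstrong}, using the energy inequality~\eqref{energyin} of the suitable measure-valued solution as the starting point and testing the measure-valued momentum and director-stress equations~\eqref{eq:velo},~\eqref{eq:q} against the regular comparison pair $(\vv,\dd)$ admitted in~\eqref{regtest}.

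First I would dispose of the regularity~\eqref{reldiss} and of the director equation, which form the easy part. The weak continuities $\f v\in\Cw$ and $\f d\in\C_w(0,T;\He)$ follow from the boundedness recorded in~\eqref{measreg} together with the weak time-derivative bounds there, by a standard argument exploiting those bounds (cf.~\cite[Page~297]{magnes}); the pointwise constraint $|\f d|=1$ is part of Definition~\ref{def:meas}, and $\f d\times\f q\in L^2(0,T;\Le)$ is read off from the dissipation term in~\eqref{energyin}. Since~\eqref{eq:mdir} is literally shared by Definitions~\ref{def:meas} and~\ref{def:diss}, nothing further is needed there.

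The heart of the proof is the relative-energy inequality~\eqref{relenin}. The first step is to pass from the measure-valued energy to the energy of the expectation: since, for fixed $\f h$, the map $\f S\mapsto F(\f h,\f S)$ is a nonnegative quadratic form---each of the $k_3,k_4,k_5$ contributions in~\eqref{frei} being a square of a linear function of $\f S$---it is convex, so Jensen's inequality together with the identification~\eqref{identify} yields $\ll{\nu_t,F}\ge\mathcal F(\f d)$, while the concentration and defect contributions $\tfrac12\ll{\mu_t,1}$ are nonnegative. This lets me replace the measure-valued quantities in~\eqref{energyin} by the function energy $\mathcal F(\f d)$ that, via the tensors $\f\Lambda$ and $\f\Theta$, makes up the diagonal part of $\mathcal E$. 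I would then differentiate the remaining, mixed terms of $\mathcal E$---namely $-(\f v,\vv)$ and the $\f\Lambda$- and $\f\Theta$-bilinear cross terms---substituting the momentum equation~\eqref{eq:velo}, the director-stress equation~\eqref{eq:q}, and the residual operator~\eqref{A}, so that the whole test-function dependence reorganizes into the duality pairing $(\mathcal A,(\vv-\f v,\f d\times(\tq-\f q)+\dots))$ on the right-hand side of~\eqref{relenin}. In this step the viscous and rotational dissipation terms assemble---thanks to Parodi's relation~\eqref{parodi} and the positivity~\eqref{con}---into the relative dissipation $\mathcal W$ of~\eqref{relW}.

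The main obstacle is the nonconvex $\f\Theta$ part. Writing $\nabla\f d\o\f d-\nabla\dd\o\dd=\nabla(\f d-\dd)\o\f d+\nabla\dd\o(\f d-\dd)$, the first summand pairs against the coercive direction and is absorbed into the $\f\Lambda$-coercivity~\eqref{kill}, but the second produces non-sign-definite contributions proportional to $\|\f d-\dd\|_{\Le}^2$ weighted by $\|\nabla\dd\o\dd\|_{L^\infty(\f L^\infty)}$. These cannot be hidden and are precisely what forces the extra term $\tfrac{|\f\Theta|^2}{2k}\|\nabla\dd\o\dd\|^2_{L^\infty(L^\infty)}\|\f d-\dd\|_{\Le}^2$ and, from the analogous treatment of the director-equation cross term, the correction $\tfrac{|\f\Theta|^2}{k}\|\nabla\dd\o\dd\|^2\,\f d\times\dd$ appearing inside the $\mathcal A$-pairing in~\eqref{relenin}. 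After collecting all remainders and bounding them by $\mathcal K(t)\,\mathcal E(t)$---using~\eqref{ellip},~\eqref{kill}, H\"older's inequality and the constraint $|\dd|=1$---plus the duality pairings against $\mathcal A$, I arrive at a differential inequality of the form $\tfrac{d}{dt}\mathcal E+\mathcal W\le\mathcal K\,\mathcal E+(\mathcal A,\cdot)+\dots$, whose integration via Gronwall's lemma produces exactly~\eqref{relenin}.
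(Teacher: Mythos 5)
Your overall strategy matches the paper's (relative energy plus Gronwall, the residual $\mathcal{A}$ arising from adding and subtracting the equations evaluated at $(\vv,\dd)$, Parodi's relation assembling $\mathcal{W}$, and the Young-inequality corrections $\frac{|\f \Theta|^2}{2k}\|\nabla\dd\o\dd\|^2_{L^\infty(\f L^\infty)}\|\f d-\dd\|_{\Le}^2$ and the $\f d\times\dd$ term), but there is a genuine gap in the order of operations. You apply Jensen's inequality at the very beginning, replacing $\ll{\nu_t,F}$ by $\mathcal{F}(\f d)$ in the energy inequality~\eqref{energyin}, and then try to close a differential inequality for the \emph{function-level} relative energy $\mathcal{E}$. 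This cannot work: the measure-valued equations~\eqref{eq:velo} and~\eqref{eq:q} inject genuinely measure-valued terms into the computation — $\ll{\nu_t,\f S^T F_{\f S}(\f h,\f S):\nabla\vv}$, $\ll{\mu_t,\f \Gamma\dreidots(\f \Gamma\cdot\nabla\vv)}$, and the $\nu$-terms in~\eqref{eq:q} — and the integration-by-parts formulae for the cross terms (Proposition~\ref{prop:int}) likewise produce remainders that are quadratic in $\f S-\nabla\dd$ \emph{under the measure}, e.g.\ $\ll{\nu_s,|\f S-\nabla\dd|^2}$ and $\ll{\nu_s,|\f S-\nabla\dd|^2|\f h-\dd|^2}$ (cf.\ Lemma~\ref{Sobolev} and \cite[Lemma~4.1, Proposition~6.1]{weakstrong}). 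These remainders are controlled by the measure-valued relative energy $\mathcal{E}^M$ of~\eqref{relEn} — which contains exactly the variance of $\nu^o$ plus the concentration and defect contributions that Jensen discards — but they are \emph{not} controlled by $\mathcal{E}$: the inequality $\mathcal{E}\leq\mathcal{E}^M$ of Proposition~\ref{prop:relenab} goes in the wrong direction for absorption. Hence your claimed differential inequality $\frac{\de}{\de t}\mathcal{E}+\mathcal{W}\leq\mathcal{K}\mathcal{E}+(\mathcal{A},\cdot)$ does not close.

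The paper's proof resolves this by reversing your two steps: it first defines $\mathcal{E}^M$ and propagates \emph{it} through the whole argument — the shifted energy equality of Proposition~\ref{prop:shift} for the test functions, the energy inequality~\eqref{energyin} for the measure-valued solution, the time-derivative Lemma~\ref{lem:timederi}, the integration-by-parts inequalities of Proposition~\ref{prop:int} and Corollary~\ref{cor:variational}, and the absorption of all remainders by $\mathcal{K}\mathcal{E}^M+\delta\mathcal{W}$ — arriving at the Gronwall inequality~\eqref{relEnergyinequ} for $\mathcal{E}^M$; only \emph{after} Gronwall does it invoke $\mathcal{E}\leq\mathcal{E}^M$ (proved via quasiconvexity of $\f S\mapsto\f S:\f\Lambda:\f S$ and $\f S\mapsto\f S\o\f d\dreidots\f\Theta\dreidots\f S\o\f d$ together with Jensen's inequality for gradient Young measures; your plain-convexity observation suffices for the oscillation part, and positivity handles $m_t$, $\mu_t$), while the right-hand side involves only the initial data, which are genuine functions, so the initial terms assemble into $\mathcal{E}(0)$. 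Two smaller points: your "differentiate the mixed terms" step also needs care, since $\f d$ has only $W^{1,2}(0,T;\f L^{\nicefrac{3}{2}})$ time regularity, so the cross terms obey the \emph{inequalities}~\eqref{intd2},~\eqref{intdd} obtained by approximating the generalized Young measure rather than a naive chain rule; and your treatment of the easy part (regularity~\eqref{reldiss}, sharing of~\eqref{eq:mdir}) is fine.
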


\begin{remark}
The theorem grants the global existence of dissipative solutions, since the global existence of measure-valued solutions was proven in~\cite{masswertig} for $\f g \in L^2(0,T; \Vd)$. The weak-strong uniqueness of dissipative solutions is intrinsically fulfilled (see Remark~\ref{rem:weakstrong}).
\end{remark}

The proof of this main result is very similar to the weak-strong uniqueness proof in~\cite{weakstrong}.
In comparison to the proof in~\cite{weakstrong}, we do not assume that $(\vv, \dd)$ is a strong solution. These are only assumed to be appropriate test functions (see Definition~\ref{def:diss}).

 \section{Proof of the main result\label{sec:pre}}
  \begin{proposition}[Shifted energy equality]\label{prop:shift}
 Let $(\vv,\dd)$ be a test function fulfilling the regularity requirements~\eqref{regtest}. 
 Then it fulfills the following shifted energy equality
 \begin{align*}
\begin{split}
 &\frac{1}{2}\|\vv (t)\|_{\Le}^2 + \mathcal{F}(\dd(t))   + \inttet{\left [(\mu_1+\lambda^2)\|\dd(s)\cdot (\nabla \vv (s))_{\sym}\dd(s)\|_{L^2}^2 +
  \mu_4 \|(\nabla \vv (s))_{\sym}\|_{\Le}^2 \right ]}  \\
& +\inttet{\left [( \mu_5+\mu_6-\lambda^2)\|(\nabla \vv (s))_{\sym}\dd(s)\|_{\Le}^2  +  \|\dd(s) \times \tq(s)\|_{\Le}^2\right ]}
\\
& \qquad =  \left ( \frac{1}{2}\|\vv(0) \|_{\Le}^2 + \F( \dd(0))\right )
 + \inttet{\left [\langle \f g(s) , \vv(s) \rangle + \left (\mathcal{A}(s), \begin{pmatrix}
 \vv (s)\\ \dd (s)\times \tq(s)
 \end{pmatrix}\right )
 \right ]}\, ,
\end{split}
\end{align*}
for all $t\in[0,T]$, 
where $\mathcal{A}$ is given in~\eqref{A}. 
 \end{proposition}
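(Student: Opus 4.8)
The plan is to carry out the classical Ericksen--Leslie energy computation directly on the regular test pair $(\vv,\dd)$. Because $(\vv,\dd)$ obey \eqref{regtest}, no approximation or passage to a limit is involved, so every manipulation is an exact equality and one arrives at an identity rather than an inequality; the operator $\mathcal{A}$ appears only because $(\vv,\dd)$ need not satisfy the momentum balance \eqref{nav}. First I would verify that $t\mapsto\tfrac12\|\vv(t)\|_{\Le}^2+\mathcal{F}(\dd(t))$ is absolutely continuous, with $\tfrac{\de}{\de t}\tfrac12\|\vv\|_{\Le}^2=\langle\t\vv,\vv\rangle$ by the standard rule for the Gelfand triple $\V\hookrightarrow\Ha\hookrightarrow\Vd$ (using $\vv\in L^2(\V)$ and $\t\vv\in L^2(\Vd)$), and $\tfrac{\de}{\de t}\mathcal{F}(\dd)=(\tq,\t\dd)$ by the chain rule for the smooth density $F$ under \eqref{regtest}. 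In the latter the boundary term from the integration by parts defining $\tq$ in \eqref{qdefq} vanishes because $\dd=\f d_1$ is constant in time on $\partial\Omega$, so that $\t\dd=0$ there; this is the only point where the Dirichlet datum enters.

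I would then pair the first component of $\mathcal{A}$ with $\vv$ and restore the forcing, so that the two $\f g$-contributions cancel. The inertial term drops by $\di\vv=0$ and $\vv|_{\partial\Omega}=0$; the Ericksen identity $\di\tilde{\f T}^E=\nabla F-\nabla\dd^{T}\tq$ (obtained by differentiating \eqref{Erik} and inserting \eqref{qdefq}) together with incompressibility reduces the elastic stress work to $-(\tq,(\vv\cdot\nabla)\dd)$, while one integration by parts gives $-(\di\tilde{\f T}^{L},\vv)=(\tilde{\f T}^{L};\nabla\vv)$. After the two time derivatives are inserted the kinetic terms cancel, and, writing $\t\dd+(\vv\cdot\nabla)\dd=\te+\skv\dd$ with $\te$ the co-rotational derivative \eqref{e}, the remaining claim reduces to the pointwise-in-time identity
\begin{align*}
(\tq,\te+\skv\dd)+\mathcal{D}=(\tilde{\f T}^{L};\nabla\vv)+(\mathcal{A}_2,\dd\times\tq),
\end{align*}
where $\mathcal{A}_2$ is the second component of \eqref{A} and $\mathcal{D}$ abbreviates the four dissipation integrands.

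This identity is purely algebraic and is closed by the director equation \eqref{eq:mdir} obeyed by the test functions, namely $\dd\times(\te+\lambda\syv\dd+\tq)=0$. Since $|\dd|=1$ forces $\dd\cdot\te=0$, the equation expresses $\te$ as minus the $\dd$-orthogonal part of $\lambda\syv\dd+\tq$; pairing it with $\dd\times\tq$ and using $(\dd\times\f a)\cdot(\dd\times\f b)=\f a\cdot\f b-(\dd\cdot\f a)(\dd\cdot\f b)$ shows $(\mathcal{A}_2,\dd\times\tq)=0$ while producing the term $\|\dd\times\tq\|_{\Le}^2$. Contracting the explicit Leslie stress \eqref{Leslie} against $\nabla\vv=\syv+\skv$ yields $\mu_1\|\dd\cdot\syv\dd\|_{L^2}^2$, $\mu_4\|\syv\|_{\Le}^2$, $(\mu_5+\mu_6)\|\syv\dd\|_{\Le}^2$, a cross term $(\mu_2+\mu_3)(\syv\dd,\te)$ and skew couplings in $\skv\dd$. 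The main obstacle is matching all of this against $\mathcal{D}$: only Parodi's relation \eqref{parodi}, $\lambda=\mu_2+\mu_3$, turns the cross coefficient into $\lambda$, so that substituting $\te$ from the director equation converts $\lambda(\syv\dd,\te)$ into $-\lambda^2$ times the squared $\dd$-orthogonal part of $\syv\dd$; this is precisely what shifts $\mu_1\mapsto\mu_1+\lambda^2$ and $\mu_5+\mu_6\mapsto\mu_5+\mu_6-\lambda^2$, reproduces $\mathcal{D}$, and lets the $\|\dd\times\tq\|_{\Le}^2$ contributions cancel. Integrating the identity over $[0,t]$ then yields the shifted energy equality.
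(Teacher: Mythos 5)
Your setup coincides with the paper's route up to the reduction to the pointwise identity $(\tq,\te+\skv\dd)+\mathcal{D}=(\tilde{\f T}^L;\nabla\vv)+(\mathcal{A}_2,\dd\times\tq)$: the paper likewise tests \eqref{nav} and \eqref{dir} \emph{evaluated at} $(\vv,\dd)$ with $\vv$ and $\dd\times\tq$, uses the chain rule for $\mathcal{F}$, the Ericksen identity, and Parodi's relation. But the step that closes your identity rests on a false premise: you assert that the test functions obey the director equation \eqref{eq:mdir}, i.e., $\dd\times\bigl(\te+\lambda\syv\dd+\tq\bigr)=0$. In Definition~\ref{def:diss}, \eqref{eq:mdir} is a condition on the dissipative solution $(\f v,\f d,\f q)$, not on the test pair $(\vv,\dd)$, which is only required to satisfy the regularity \eqref{regtest} and the boundary condition. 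The second component $\mathcal{A}_2$ of \eqref{A} (see also \eqref{A2}) is nonzero precisely because $(\vv,\dd)$ need not solve \eqref{dir} --- you grant this freedom for the momentum balance but revoke it for the director equation. Under your assumption $\mathcal{A}_2\equiv 0$, so the pairing $(\mathcal{A},(\vv,\dd\times\tq)^T)$ degenerates to its first component and what you prove is Proposition~\ref{prop:shift} restricted to test pairs solving \eqref{dir}. That special case is insufficient: in the proof of Theorem~\ref{thm:main} the shifted energy equality is applied, via \eqref{strongeq} and \eqref{vormeins}, to pairs whose director residual contributes nontrivially to the right-hand side of \eqref{relenin}.

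The gap is not cosmetic. Without \eqref{eq:mdir} for $(\vv,\dd)$, the substitution you perform --- which is exactly the content of \cite[Corollary~3.1]{weakstrong}, an identity derived there for solutions of \eqref{dir} --- must be replaced by the exact decomposition (valid since $\dd\cdot\te=0$ for $|\dd|=1$ and $\rot{\dd}^T\rot{\dd}=I-\dd\o\dd$)
\begin{align*}
\te = -\left(I-\dd\o\dd\right)\left(\lambda\syv\dd+\tq\right)+\rot{\dd}^T\mathcal{A}_2\,,
\end{align*}
and propagating the correction $\rot{\dd}^T\mathcal{A}_2$ through the $\te$-dependent Leslie contributions $\lambda(\te,\syv\dd)$ and $(\te,\skv\dd)$ generates additional couplings of $\mathcal{A}_2$ with $\dd\times\syv\dd$ and $\dd\times\skv\dd$, which are not of the form $(\mathcal{A}_2,\dd\times\tq)$ and do not cancel for a general test pair. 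So your closing ``purely algebraic'' identity fails as soon as $\mathcal{A}_2\neq 0$. The paper's proof is organized differently on exactly this point: it never claims that $(\vv,\dd)$ solves \eqref{eq:mdir}; instead the full residual $(\mathcal{A},(\vv,\dd\times\tq)^T)$ is produced by the testing itself, at the very beginning, and only then is the algebra with $\te$ carried out. To repair your argument you must keep $\mathcal{A}_2$ explicit throughout the Leslie-stress computation rather than setting it to zero.
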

 \begin{proof}
 The proof is similar to the proof of~Proposition~3.1 in~\cite{unsere} despite the fact that we assume that $(\vv,\dd)$ are test functions and not solutions of~\eqref{eq:strong}. 
  Equation~\eqref{nav} and equation~\eqref{dir} evaluated at $(\vv, \dd)$ are tested with $\vv$ and $\dd \times \tq $, respectively. Then both equations are summed up. 
This gives the last term on the right hand side.
 On the left hand side, we use the same manipulations as in~\cite[Proposition~3.1]{weakstrong}. 
Remark that the identities  $ \rot{\dd}^T \rot{\dd} = I - \dd \o \dd $ , $ \t | \dd|^2=0 $ as well as $  (\vv \cdot\nabla)| \dd|^2=0$ hold for the unit vector $\dd$. 
For the Leslie-stress~$\f T^L$ tested with $\syv$, we use \cite[Corollary~3.1]{weakstrong} stating that $\tilde{\f e } = - \rot{\dd}^T \rot{\dd} ( \lambda \syv \dd + \tq)$. This leads to 
\begin{align*}
  (\tilde{\f T}^L& ; \syv )- ( \dd \times \skv \dd , \dd \times \tq )+ \lambda ( \dd \times \syv \dd , \dd \times \tq ) \\
  ={}& \mu_1 | \dd \cdot \syv \dd|^2 +  \mu_4 |\syv|^2 + ( \mu_5 + \mu_6) | \syv \dd|^2 + \lambda (\te ,\syv \dd) 
  \\&+ \lambda ( \syv \dd , \skv \dd) + ( \te , \skv \dd) - ( \dd \times \skv \dd , \dd \times \tq )+ \lambda ( \dd \times \syv \dd , \dd \times \tq ) \\
  ={}&(\mu_1+ \lambda^2) | \dd \cdot \syv \dd|^2 +  \mu_4 |\syv|^2 + ( \mu_5 + \mu_6- \lambda^2) | \syv \dd|^2 \\
  &-  \lambda  (\dd \times \tq  ,\dd \times \syv \dd) + \lambda ( \dd \times \syv \dd , \dd \times \tq ) 
  + ( \dd \times \skv \dd , \dd \times \tq ) -( \dd \times \skv \dd , \dd \times \tq )
   \\={}&(\mu_1+ \lambda^2) | \dd \cdot \syv \dd|^2 +  \mu_4 |\syv|^2 + ( \mu_5 + \mu_6- \lambda^2) | \syv \dd|^2 
  \,.
\end{align*}
Summing up both tested equations and integrating in time gives the desired shifted energy equality.

 \end{proof}


\subsection{Relative measure-valued energy}
The relative measure-valued energy is defined by 
\begin{align}
\begin{split}
\mathcal{E}^M(t) :
={}&\frac{1}{2} \ll{\mu,1}+ \frac{1}{2} \left \| \f v(t) - \vv(t) \right \| _{\Le}^2 +  \frac{1}{2}\ll{\nu_t ,(\f S - \nabla \dd( t) ) : \f \Lambda : (\f  S- \nabla \dd( t)) } \\& +\frac{1}{2} \ll{\nu_t ,(\f S\o \f h - \nabla \dd( t) \o \dd( t) ) \dreidots \f \Theta  \dreidots (\f S\o \f h  
 - \nabla \dd( t) \o \dd( t) )}
  \,.
  \end{split}
  \label{relEn}
\end{align}
Inserting the definitions of the tensors $\f \Lambda $ and $\f \Theta $,
the relative measure-valued energy can be expressed as
\begin{align*}
\mathcal{E}^M(t)={}& \frac{1}{2}\int_\Omega \mu_t(\de \f x) + 
 \frac{1}{2}\ll{\nu_t ,k_1(\tr{(\f S)} - \tr{(\nabla \dd( t) )})^2   + 2 k_2 | (\f S)_{\skw} - ( \nabla\dd(t))_{\skw} |^2 }\notag \\&+\frac{1}{2} \ll{\nu_t, k_3| \tr(\f S) \f h - (\di \dd(t)) \dd(t) |^2 + k_4((\f S)_{\skw}:  [ \f h]_{\f X} - ( \nabla \dd(t))_{\skw}:  [ \dd(t) ]_{\f X})^2    }\notag
 \\
  &+2 k_5\ll{\nu_t,    | (\f S)_{\skw} \f h - ( \nabla\dd(t))_{\skw} \dd (t)|^2   } + \frac{1}{2} \left \| \f v(t) - \vv(t) \right \| _{\Le}^2\,.
\end{align*}
We remark that due to the regularity shown in~\cite{masswertig}, it holds $\mathcal{E}^M\in L^\infty(0,T)$. 

\begin{proposition}\label{prop:relenab}
Let $((\f v ,\f d ),(\nu^o,m,\nu^\infty),(\mu,\nu^\mu))$ be a measure valued solution and $(\vv, \dd)$ fulfilling~\eqref{regtest}.
Then 
\begin{align*}
\mathcal{E}(t)\leq \mathcal{E}^M(t) \, 
\end{align*}
for a.e.~$t\in(0,T)$ with $\nabla \f d (\f x ,t) =  \mathbb{E}(\nu^o_{(\f x ,t)})= \int_{\R^{3\times 3}} \f S \nu^o_{(\f x ,t ) } (\de \f S) $.

\end{proposition}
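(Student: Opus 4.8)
The plan is to reduce the inequality to a pointwise application of Jensen's inequality for the oscillation measure $\nu^o$, together with the observation that the concentration and defect contributions are nonnegative. First I would note that $\mathcal{E}(t)$ and $\mathcal{E}^M(t)$ share the identical velocity term $\tfrac12\|\f v(t)-\vv(t)\|_{\Le}^2$, which therefore drops out of the comparison, and that the defect term $\tfrac12\ll{\mu,1}=\tfrac12\int_\Omega\mu_t(\de\f x)\ge 0$ only enlarges $\mathcal{E}^M$ and may be kept aside. It then remains to compare the five director contributions, for which I would use the expanded forms of $\mathcal{E}$ and $\mathcal{E}^M$ displayed in the excerpt, since there each integrand is an explicit square, and split each bracket $\ll{\nu_t,\cdot}$ into its oscillation and concentration parts.

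The core point is that each director integrand, regarded as a function of $\f S\in\R^{3\times3}$ with $\f h=\f d(\f x,t)$ held fixed, is a \emph{convex} (indeed nonnegative) quadratic form, being the square of an expression affine in $\f S$: this is clear for $\f S\mapsto(\tr\f S-\di\dd)^2$, $|\f S_{\skw}-(\nabla\dd)_{\skw}|^2$, $|\tr(\f S)\f d-(\di\dd)\dd|^2$, $(\f S_{\skw}:\rot{\f d}-(\nabla\dd)_{\skw}:\rot{\dd})^2$, and $|\f S_{\skw}\f d-(\nabla\dd)_{\skw}\dd|^2$. Equivalently, in tensor form one checks directly from~\eqref{Lambda} that $\f S:\f\Lambda:\f S=k_1(\tr\f S)^2+2k_2|\f S_{\skw}|^2\ge0$ and from the free energy that $(\f S\o\f d)\dreidots\f\Theta\dreidots(\f S\o\f d)=k_3|\f d|^2(\tr\f S)^2+k_4(\rot{\f d}:\f S_{\skw})^2+4k_5|\f S_{\skw}\f d|^2\ge0$ for every matrix $\f S$; note this is genuine convexity on all of $\R^{3\times3}$, not merely the rank-one ellipticity~\eqref{ellip}, which is exactly what is needed since the support of $\nu^o_{(\f x,t)}$ consists of arbitrary matrices. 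Fixing $(\f x,t)$, since $\nu^o_{(\f x,t)}$ is a probability measure with barycenter $\nabla\f d(\f x,t)=\int_{\R^{3\times3}}\f S\,\nu^o_{(\f x,t)}(\de\f S)$, Jensen's inequality gives $\phi(\nabla\f d(\f x,t))\le\int_{\R^{3\times3}}\phi(\f S)\,\nu^o_{(\f x,t)}(\de\f S)$ for each such integrand $\phi$. Using linearity of $\tr$, of the skew part, and of $\f S\mapsto\f S\f d$ to identify $\mathbb{E}(\tr\f S)=\di\f d$, $\mathbb{E}(\f S_{\skw})=(\nabla\f d)_{\skw}$, $\mathbb{E}(\tr(\f S)\f d)=(\di\f d)\f d$, and $(\nabla\f d)_{\skw}:\rot{\f d}=\f d\cdot\curl\f d$, then integrating over $\Omega$ and collecting the coefficients $k_1/2,k_2,k_3/2,k_4/2,2k_5$ reproduces precisely the five terms of $\mathcal{E}(t)$, now bounded above by the oscillation parts of $\mathcal{E}^M(t)$.

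Finally I would show that the concentration parts of the brackets are nonnegative and hence can only increase $\mathcal{E}^M$. Each integrand $\phi$ above is nonnegative as a function of $(\f h,\f S)$ jointly (not only for $\f h=\f d$, since it is a squared norm for every $\f h$), so its recession transform $\tilde\phi$ from~\eqref{transi} equals a nonnegative value of $\phi$ times the nonnegative factors $(1-|\tilde{\f h}|^2)(1-|\tilde{\f S}|^2)$ on $\ov B_3\times\ov B_{3\times3}$ and is therefore nonnegative; integrating it against the positive measures $m_t$ and $\nu^\infty_{(\f x,t)}$ yields a nonnegative contribution. Combining the three steps gives $\mathcal{E}(t)\le\mathcal{E}^M(t)$ for a.e.\ $t$. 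The only step requiring genuine care is this last one: one must verify that the integrands lie in the admissible class so that $\ll{\nu_t,\cdot}$ is well defined and splits linearly term by term, and interpret the recession functions correctly; this is where the generalized Young measure structure enters, but once the integrands are recognized as globally nonnegative, the sign of the concentration contribution is immediate.
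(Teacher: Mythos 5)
Your proposal is correct, and it reaches the conclusion by a genuinely more elementary route than the paper. The paper proceeds exactly as you do in disposing of the nonnegative extra terms: $\ll{\mu_t,1}\geq 0$ by positivity of the defect measure, and the concentration parts of the brackets are dropped because $m_t$ and $\nu^\infty_{(\f x,t)}$ are positive measures integrated against nonnegative (recession-transformed) integrands. The difference lies entirely in the Jensen step. The paper treats the maps $\f S\mapsto \f S:\f\Lambda:\f S$ and $\f S\mapsto \f S\o\f d\dreidots\f\Theta\dreidots\f S\o\f d$ as \emph{quasiconvex} functions, verifying the quasiconvexity condition~\eqref{quasiconvex} by hand via the divergence theorem (the cross term is a divergence that vanishes for compactly supported $\f \xi$, the remainder is nonnegative), and then invokes Jensen's inequality for quasiconvex functions and gradient Young measures from~\cite[Theorem~1.1]{KinderlehrerPedregal}. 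You instead observe that for fixed $\f h=\f d(\f x,t)$ each integrand is a sum of squares of affine functions of $\f S$ --- equivalently $\f S:\f\Lambda:\f S = k_1(\tr\f S)^2+2k_2|(\f S)_{\skw}|^2$ and $\f S\o\f h\dreidots\f\Theta\dreidots\f S\o\f h = k_3|\f h|^2(\tr\f S)^2+k_4(\rot{\f h}:(\f S)_{\skw})^2+4k_5|(\f S)_{\skw}\f h|^2$ --- hence genuinely convex on all of $\R^{3\times 3}$, so that the classical Jensen inequality applies to the probability measure $\nu^o_{(\f x,t)}$ using only the barycenter identity~\eqref{identify} and linearity of $\tr$, of the skew part, and of $\f S\mapsto\f S\f h$. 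Your parenthetical point that rank-one ellipticity~\eqref{ellip} would not suffice is well taken, and your route buys something concrete: it never uses that $\nu^o$ is \emph{generated by gradients}, only that it is a parametrized probability measure with the prescribed barycenter, whereas the paper's quasiconvexity machinery is what one would need if the forms were quasiconvex but not convex. Conversely, the paper's divergence-theorem computation is essentially your sum-of-squares positivity in disguise: the nonnegativity of the remainder terms $\int_D \nabla\f\xi:\f\Lambda:\nabla\f\xi\,\de\f y$ and $\int_D \nabla\f\xi\o\f d\dreidots\f\Theta\dreidots\nabla\f\xi\o\f d\,\de\f y$ holds for exactly the pointwise reason you isolate. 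Your closing caveat about the admissible class is the right one to flag, and it is unproblematic here: the integrands are polynomial of degree two in $\f S$ and at most two in $\f h$, so the transform~\eqref{transi} extends continuously to $\ov B_3\times\ov B_{3\times 3}$, and global nonnegativity of the integrands makes the transforms nonnegative, which is all the concentration estimate requires.
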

\begin{proof}
The main idea to prove the assertion is to apply Jensen's inequality on the probability measure~$\nu^o$. 

First, we observe that $\mu$ is a positive measure such that $\ll{\mu_t,1}\geq 0$. 
Similarly, the defect measure $\nu^\infty$ can be estimated by zero. Indeed, using the definition of the generalized gradient Young measure, we observe that 
\begin{align*}
\ll{\nu_t ,(\f S - \nabla \dd( t) ) : \f \Lambda : (\f  S- \nabla \dd( t)) } = {}& \int_{\Omega} \int_{\R^{d\times d} }(\f S - \nabla \dd( t) ) : \f \Lambda : (\f  S- \nabla \dd( t))  \nu^o_{(\f x, t)} ( \de \f S)\de \f x \\&+ \int_{\ov\Omega}\int_{\Se^{d^2-1} \times \ov B_d} \tilde{\f S} : \f \Lambda : \tilde{\f S} ( 1- | \tilde{\f h}|^2) \nu_{(\f x, t)}^\infty ( \de \tilde{\f S}, \de \tilde{\f h}) m_t (\de \f x)\\
\geq {}& \int_{\Omega} \int_{\R^{d\times d} }(\f S - \nabla \dd( t) ) : \f \Lambda : (\f  S- \nabla \dd( t))  \nu^o_{(\f x, t)} ( \de \f S)\de \f x \,,
\end{align*}
since $\nu^\infty$ and $m_t$ are positive measures and the integrand is also positive. 
We observe that $ \f S \mapsto \f S : \f \Lambda : \f S $ is a quasi-convex function (see~\cite{quasiconvex}).
A function $f: \R^{3\times 3} \ra \R$ is called quasi-convex, if 
\begin{align}\label{quasiconvex}
f( \f A) \leq \frac{1}{|D |} \int_{D} f( \f A + \nabla \f  \xi(\f y) ) \de \f y  \qquad\text{for all } \f \xi \in \C_c^\infty(D;\R^3) \text{ and }\f A \in \R^{3\times 3}\,.
\end{align}
For the function $f(\f A) = \f A : \f \Lambda : \f A $ it holds
\begin{align*}
\int_{D} ( \f A + \nabla  \f  \xi(\f y)  ) : \f \Lambda : ( \f A + \nabla \f  \xi(\f y)  ) \de \f y   - \int_{D}  \f A : \f \Lambda : \f A  \de \f y = 2 \int_{D} \di (   (\f \Lambda :  \f  A)^T \f \xi (\f y) )   \de \f y + \int_{D}  \nabla  \f  \xi(\f y)   : \f \Lambda :  \nabla \f  \xi(\f y)   \de \f y \geq 0 \,.
\end{align*}
The first term on the right-hand side vanishes with the divergence theorem and $\f \xi \in  \C_c^\infty(D;\R^3)$ and the last term on the right-hand side is positive. 
The definition~\eqref{quasiconvex} is fulfilled and with Jensen's inequality for quasi-convex functions~\cite[Theorem~1.1]{KinderlehrerPedregal}, we find
\begin{align*}
 \int_{\R^{d\times d} }(\f S - \nabla \dd(\f x, t) ) : \f \Lambda : (\f  S- \nabla \dd(\f x, t))  \nu^o_{(\f x, t)} ( \de \f S)  \geq  ( \nabla \f d(\f x ,t) - \nabla \dd(\f x ,t)) : \f \Lambda :( \nabla \f d(\f x ,t) - \nabla \dd(\f x ,t))\,.
\end{align*}
Note that the equality~\eqref{identify} holds.

The same argumentation is applied to the term in the second line of~\eqref{relEn}.
The defect-measure $\nu^\infty$ is a positive measure such that 
\begin{align*}
&\ll{\nu_t ,(\f S\o \f h - \nabla \dd( t) \o \dd( t) ) \dreidots \f \Theta  \dreidots (\f S\o \f h  
 - \nabla \dd( t) \o \dd( t) )} \\ {}&\qquad= \int_{\Omega} \int_{\R^{d\times d} }(\f S\o \f d(\f x ,t) - \nabla \dd( \f x ,t) \o \dd (\f x ,t)) \dreidots \f \Theta \dreidots (\f  S\o \f d(\f x ,t)- \nabla \dd( \f x ,t)\o \dd(\f x ,t))  \nu^o_{(\f x, t)} ( \de \f S)\de \f x \\&\qquad\quad+ \int_{\ov \Omega }\int_{\Se^{d^2-1} \times \ov B_d} \tilde{\f S}\o \tilde{\f h} \dreidots \f \Theta \dreidots \tilde{\f S}\o \tilde{\f h}  \nu_{(\f x, t)}^\infty ( \de \tilde{\f S}, \de \tilde{\f h}) m_t (\de \f x)\\
&\qquad\geq {} \int_{\Omega} \int_{\R^{d\times d} }(\f S\o \f d(\f x ,t) - \nabla \dd( \f x ,t) \o \dd (\f x ,t)) \dreidots \f \Theta \dreidots (\f  S\o \f d(\f x ,t)- \nabla \dd( \f x ,t)\o \dd(\f x ,t)) \nu^o_{(\f x, t)} ( \de \f S)\de \f x \,.
\end{align*}
The function~$f(\f A) = \f A \o \f d  \dreidots \f \Theta \dreidots \f A \o \f d  $ is again quasi-convex
\begin{align*}
\int_{D} ( \f A + \nabla  \f  \xi(\f y)  )& \o \f d  \dreidots \f \Theta \dreidots( \f A + \nabla \f  \xi(\f y)  ) \o \f d  \de \f y  - \int_{D}   \f A \o \f d  \dreidots \f \Theta \dreidots \f A \o \f d  \de \f y \\={}& 2 \int_{D} \di (  (\f d  \cdot \f \Theta \dreidots \f A \o \f d)\cdot \f \xi (\f y)  )   \de \f y + \int_{D}  \nabla  \f  \xi(\f y)   \o \f d  \dreidots \f \Theta \dreidots   \nabla \f  \xi(\f y) \o \f d   \de \f y \geq{}0\,.
\end{align*}
Note that the first term on the right-hand side of the equality-sign is zero since $\xi \in \C_c^\infty(D;\R^3)$ and the last term is non-negative. We investigated the quasiconvexity in every point $(\f x ,t)$ such that $\f d $ is a constant vector. The linearity of $ \f \xi$ in the second term on the right-hand side is essential.  
Jensen's inequality for quasi-convex functions~\cite[Theorem~1.1]{KinderlehrerPedregal} yields
\begin{multline*}
\int_{\R^{d\times d} }(\f S\o \f d(\f x ,t) - \nabla \dd(\f x , t) \o \dd (\f x ,t)) \dreidots   \f \Theta \dreidots (\f  S\o \f d(\f x ,t)- \nabla \dd(\f x , t)\o \dd(\f x ,t)) \nu^o_{(\f x, t)} ( \de \f S)
\\\geq \left ( \nabla \f d (\f x ,t) \o \f d(\f x,t) - \nabla \dd(\f x ,t) \o \dd(\f x ,t) \right ) \dreidots \f \Theta \dreidots\left ( \nabla \f d (\f x ,t) \o \f d(\f x,t) - \nabla \dd(\f x ,t) \o \dd(\f x ,t) \right )\,.
\end{multline*}

\end{proof}
\begin{remark}[Variance of the measure-valued solution ]
Similar to the expectation, which is the first moment of the probability measure, the centered second moment or variance can be calculated. 
The variance of a measure $\nu^o\in \mathcal{P}(\Omega;\R^{d\times d}) $ with expectation $\nabla \f d $, measured in the semi-norm $\sqrt{ \cdot : \f \Lambda : \cdot }$ on $\R^{d\times d }$, is given by 
\begin{align*}
\sigma(\nu^o) = \int_{\R^{d\times d} } ( \f S- \nabla \f d  ):\f \Lambda :  ( \f S- \nabla \f d  ) \nu^0_{(\f x ,t)}(\de \f S) = \int_{\R^{d\times d} }  \f S:\f \Lambda :   \f S \nu^0_{(\f x ,t)} (\de \f S) - \int_{\R^{d\times d} } \nabla \f d  :\f \Lambda :   \nabla \f d   \nu^0_{(\f x ,t)}(\de \f S) \,.
\end{align*}
The second equality can be verified by~\eqref{identify}. 
The inequality of Proposition~\eqref{prop:relenab} can be seen as a gab induced by the lack of regularity. If  this inequality becomes an equality, this would imply that the defect measure $m_t$ vanishes and that the variation~$\sigma(\nu^0)$ vanishes. 
This would further imply that  the oscilation measure is just a point measure, i.e., $\nu^o_{(\f x ,t)}  = \delta _{\nabla \f d(\f x ,t)}$. 
\end{remark}
\begin{lemma}\label{lem:timederi}
Let $\f d $ be a measure-valued solution according to Definition~\ref{def:meas} and $\dd$ an appropriate test function admitting the regularity~\eqref{regtest} as well as $\f a \in L^1(0,T, \f L^3)$. 
For every $\delta>0$ there exists a $C_\delta>0$ such that
\begin{multline*}
\int_0^t \left (  {\dd(s)}\times  \t \dd(s)  -  {\f d(s) } \times \t \f d(s)  ,\left ( {\f d(s) } - {\dd(s)} \right )\times \f a(s) \right ) \de s\\ \leq  \int_0^t\left [\mathcal{K}(s)\mathcal{E}(s)  + \delta  \mathcal{W}(s)\right ]\de s+ \int_0^t\left (\mathcal{A}_2(s) , ({ \f d(s)  -\dd(s)  })\times  \f a(s) \right ) \,. 
\end{multline*}
Here $\mathcal{K}$ is given by $\mathcal{K}= C_\delta \left ( \| \vv\| _{\f L^\infty}^2 + \| \vv\|_{\f W^{1,3}}^2+ \| \f a \|_{L^3}^2\right )(\| \f d \|_{L^\infty(\f L^\infty)}\| \dd \|_{L^\infty(\f L^\infty)}^2  \| \nabla \dd \|_{L^\infty(\f L^{3})}^2+1)$.
By $\mathcal{A}_2$, we denote the second component of~\eqref{A}, given by
\begin{align}
\mathcal{A}_2(s):=  {\dd(s)}\times \left ( \t \dd(s) + (\vv(s)\cdot \nabla) \dd (s)- (\nabla \vv(s))_{\skw} \dd(s) + \lambda (\nabla \vv(s))_{\sym} \dd(s) + \tq (s)\right )\,.\label{A2}
\end{align}
\end{lemma}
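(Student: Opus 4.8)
The plan is to remove both time derivatives by inserting the two director equations and then to absorb the resulting lower-order remainder into $\mathcal{E}$ and $\mathcal{W}$. Since $|\f d|=|\dd|=1$, the field $\f d-\dd$ is pointwise bounded, so $(\f d-\dd)\times\f a$ is an admissible test function for the measure-valued director equation~\eqref{eq:mdir}; testing on $[0,t]$ gives
\[
\int_0^t\big(\f d\times\t\f d,\,(\f d-\dd)\times\f a\big)\de s=-\int_0^t\big(\f d\times\f B,\,(\f d-\dd)\times\f a\big)\de s,
\]
where $\f B:=(\f v\cdot\nabla)\f d-\sk{v}\f d+\lambda\sy{v}\f d+\f q$. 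On the other hand, the definition~\eqref{A2} reads $\dd\times\t\dd=\mathcal{A}_2-\dd\times\ft B$ with $\ft B:=(\vv\cdot\nabla)\dd-(\nabla\vv)_{\skw}\dd+\lambda(\nabla\vv)_{\sym}\dd+\tq$. Subtracting, the $\mathcal{A}_2$-contribution reproduces exactly the last term of the asserted inequality, and it remains to show that the remainder
\[
R:=\int_0^t\big(\f d\times\f B-\dd\times\ft B,\,(\f d-\dd)\times\f a\big)\de s
\]
is bounded by $\int_0^t[\mathcal{K}\mathcal{E}+\delta\mathcal{W}]\de s$.

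The central step is to estimate $R$ after expanding $\f d\times\f B-\dd\times\ft B$ \emph{perturbatively} in the two error fields $\f v-\vv$ and $\f d-\dd$, i.e.\ substituting $\f d=\dd+(\f d-\dd)$ and $\f v=\vv+(\f v-\vv)$ throughout. Every summand then carries at least one error factor and, decisively, none contains the product $(\f v\cdot\nabla)\f d$ of the two low-regularity solution quantities. I would bound each summand by Hölder's inequality, mostly with the exponent triples $(6,2,3)$ and $(6,3,3)$, invoking the pointwise bound $|\f d-\dd|\le 2$ to trade a surplus power of $\f d-\dd$ for integrability when needed. The required smallness comes from two sources: the coercivity~\eqref{kill} together with Poincaré's and Sobolev's inequalities (note $\f d-\dd$ vanishes on $\partial\Omega$) yields $\|\f d-\dd\|_{\f L^6}+\|\nabla(\f d-\dd)\|_{\Le}\lesssim\mathcal{E}^{1/2}$, while Korn's inequality for the divergence-free field $\f v-\vv$ yields $\|\f v-\vv\|_{\f L^6}+\|\nabla(\f v-\vv)\|_{\Le}\lesssim\mathcal{W}^{1/2}$. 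The stretching and variational summands are matched directly with the quantities $(\nabla\f v)_{\sym}\f d-(\nabla\vv)_{\sym}\dd$ and $\f d\times\f q-\dd\times\tq$ appearing in~\eqref{relW}, and each product is split by Young's inequality into a $\delta\,\mathcal{W}$ part and a $C_\delta(\dots)\,\mathcal{E}$ part whose prefactor is collected into $\mathcal{K}$.

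I expect the convective and the rotational summands to be the main obstacle. For the convection the difficulty is purely one of integrability, and it is exactly the perturbative expansion that prevents the appearance of the uncontrollable product of $\f v$ with $\nabla\f d$; one must then verify that each admissible piece—such as $\dd\times((\f v-\vv)\cdot\nabla)\dd$, $\dd\times(\vv\cdot\nabla)(\f d-\dd)$ and the genuinely quadratic remainders—balances its Hölder exponents against the available $\mathcal{E}^{1/2}$ and $\mathcal{W}^{1/2}$ factors, which works precisely because $\nabla\dd$ and $\f a$ both lie in $\f L^3$. The rotational term is the more delicate one, since the relative dissipation~\eqref{relW} contains \emph{no} antisymmetric-gradient contribution; the key observation here is that Korn's inequality controls the \emph{full} gradient $\nabla(\f v-\vv)$, hence also its skew part, by $\mathcal{W}^{1/2}$, so that $\dd\times(\nabla(\f v-\vv))_{\skw}\dd$ can still be absorbed into $\delta\,\mathcal{W}+\mathcal{K}\mathcal{E}$. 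Summing the finitely many contributions and enlarging the constant $C_\delta$ in $\mathcal{K}$ accordingly then gives the claim.
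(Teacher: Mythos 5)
Your proposal is correct and follows essentially the same route as the paper: the paper likewise inserts the measure-valued director equation~\eqref{eq:mdir} for $\f d$, simultaneously adds and subtracts the director equation evaluated at the test function to produce the residual $\mathcal{A}_2$, and then absorbs the remaining difference terms into $\delta\,\mathcal{W}+\mathcal{K}\,\mathcal{E}$ via exactly the H\"older/Young/Sobolev/Korn-type estimates you sketch (the paper delegates this last step to Lemma~4.2 of~\cite{weakstrong}). Your perturbative expansion in $\f v-\vv$ and $\f d-\dd$, including the observation that Korn's inequality controls the skew part of $\nabla(\f v-\vv)$ by the symmetric-gradient term in $\mathcal{W}$, is precisely the mechanism used there.
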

\begin{proof}
First, we insert equation~\eqref{eq:mdir}  for the measure-valued solution and add and subtract  simultaneously \eqref{dir} for the test function. This gives
\begin{subequations}\label{gleichung}
\begin{align}
\int_0^t &\left (  \rot{\f d      -\dd      }^T \left ( \rot{\f d       } \t \f d       - \rot{\dd      } \t \dd       \right ) , \f a       \right ) \de s \notag\\
={}& \int_0^t \left ( \left ( \rot{\f d       } - \rot{\dd      } \right )^T \left ( \rot{\dd      } ( \vv       \cdot \nabla ) \dd      - \rot{\f d       } ( \f v        \cdot \nabla ) \f d    +\rot{\f d       } \sk v  \f d -\rot{\dd      } \skv \dd      \right ) , \f a       \right ) \de s
\label{gleichung2}\\
&+  \int_0^t \left ( \left ( \rot{\f d}     -\rot{\dd      } \right )^T \left ( \lambda\rot{\dd      } \syv \dd      - \lambda\rot{\f d       } \sy v\f d    +\rot{\dd      } \tq      - \rot{\f d       } \f q          \right ) , \f a       \right ) \de s
\label{gleichung4}
\\
&+ \int_0^t\left ( \rot{\dd} \left ( \t \dd  + (\vv \cdot \nabla) \dd  - (\nabla \vv )_{\skw} \dd  + \lambda (\nabla \vv )_{\sym} \dd  + \tq  \right ) , \rot{\dd  -\f d } \f a  \right )\,.\notag
\end{align}
\end{subequations}
The dependence on $s$ is not written out to remain the lucidity.
The terms in the lines~\eqref{gleichung2} and~\eqref{gleichung4} can be estimated in the same way as in~\cite[Lemma~4.2]{weakstrong}. This gives the assertion. 

\end{proof}
\subsection{Integration-by-parts formulae\label{sec:int}}
\begin{proposition}[Integration-by-parts fomula]\label{prop:int}
Let $(\f v, \f d)$ be a measure-valued solution according to Definition~\ref{def:meas} and $( \vv , \dd )$ an appropriate test function admitting the regularity~\eqref{regtest}.
 Then the  integration-by-parts formulae 
 \begin{subequations}\label{intd}
\begin{align}
( \f v ( t) , \vv(t)) - ( \f v (0) , \vv(0)) ={}& \int_0^t( \f v ( s) , \t \vv (s) ) + ( \t \f v (s) , \vv(s)) \de s \, ,\label{intv}\\
\frac{1}{2}\| \f d(t) - \dd(t) \|_{\Le}^2- \frac{1}{2}\| \f d(0) -\dd(0)\|_{\Le}^2  ={}& \int_0^t \left (  \f d(s) \times \t \f d(s) - \dd(s) \times \t \dd (s), \dd (s)\times \f d (s)  \right )\de s 
 \, , \label{intd1}
\end{align}
\begin{align}
\begin{split}
\left ( \nabla \f d(t) ;\, \f \Lambda : \nabla \dd(t)\right ) &- \left (  \nabla \f d(0) ;\, \f \Lambda : \nabla\dd(0)\right )\\ 
\geq{}&  \inttet{\left [ \ll{\nu_t,  \f S   :\f \Lambda :\left (\f \Upsilon : \left (\rot{\dd}\t \dd \o \f S \right ) \right )}+ \left ( \nabla \f d ; \f \Lambda : \rot{\f d }^T \nabla ( \rot{\dd} \t \dd )   \right )-\left ( \rot{\dd}  \Lap \dd , \rot{\f d } \t \f d  \right )\right ]}\\
&
- \inttet{\mathcal{K}\mathcal{E}}+ \inttet{\left (( \rot{\dd}-\rot{\f d } )^T ( \rot{\dd}\t \dd - \rot{\f d }\t \f d ) , - \Lap \dd \right )}\,,
\end{split}\label{intd2}
\end{align}
 and
\begin{align}
\begin{split}
\big ( \nabla \f d(t) \o \f d (t) \dreidotkom \f \Theta&  \dreidots \nabla \dd(t) \o \dd (t) \big )
-
\left ( \nabla \f d(0) \o \f d (0) \dreidotkom \f \Theta \dreidots \nabla \dd(0) \o \dd (0) \right )\\
&-
 \left ((\nabla \f d (t)-\nabla \dd(t)  )\o (\f d(t) -  \dd(t) )  \dreidots \f \Theta \dreidots \nabla \dd(t)  \o \dd(t)\right  )
  \\&+\left ((\nabla \f d (0)-\nabla \dd(0)  )\o (\f d(0) -  \dd(0)  ) \dreidots \f \Theta \dreidots \nabla \dd(0)  \o \dd(0) \right )
\\
\geq {}& \inttet{\left ( \rot{\f d} \t \f d , \rot{\dd} \left (- \di \left ( \dd \cdot \f \Theta \dreidots \nabla \dd \o \dd \right )+ \nabla \dd : \f \Theta \dreidots \nabla \dd \o \dd \right ) \right )}
\\
&+ \inttet{ \ll{\nu_\tau , \f S \o \f h \dreidots \f \Theta \dreidots \left (\f \Upsilon : ( \rot \dd^T  \t \dd \o \f S )\right ) \o \f h }}
\\&+ \inttet{ \left ( \nabla \f d \o \f d \dreidotkom \f \Theta \dreidots \rot{\f d}^T \nabla \left (\rot \dd \t \dd \right ) \o \f d \right )}
\\
&+ \inttet{ \ll{\nu_\tau , \f S \o \f h \dreidots \f \Theta \dreidots \f S \o \rot{\f h}^T \rot \dd \t \dd }} - c \inttet{\mathcal{K}\mathcal{E}}
\\
&+\inttet{\left ( \rot{\f d} \t \f d - \rot \dd \t \dd , \left ( \rot{\f d}-\rot{\dd}\right ) \left (- \di \left ( \dd \cdot \f \Theta \dreidots \nabla \dd \o \dd \right )+ \nabla \dd : \f \Theta \dreidots \nabla \dd \o \dd \right ) \right )} \,.
\end{split}\label{intdd}
\end{align}
\end{subequations}
hold for a.e. $t\in(0,T)$. 
In the formulas~\eqref{intd2} and~\eqref{intdd}, the dependence on $s$ under the time integral is not written out to remain the lucidity.
\end{proposition}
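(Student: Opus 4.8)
The plan is to establish the four formulae separately, using throughout the unit-length constraints $|\f d|=|\dd|=1$ to rewrite the weak time derivatives by means of the cross products $\rot{\f d}\t\f d=\f d\times\t\f d$ and $\rot{\dd}\t\dd=\dd\times\t\dd$, which are the only combinations of $\t\f d$, $\t\dd$ controlled by the director equation~\eqref{eq:mdir}. Formula~\eqref{intv} is just the classical integration-by-parts rule for the duality pairing: since $\f v\in L^2(0,T;\V)\cap W^{1,2}(0,T;\Vd)$ by~\eqref{measreg} and $\vv\in L^\infty(0,T;\Ha)\cap W^{1,2}(0,T;\Vd)$ by~\eqref{regtest}, the scalar function $t\mapsto(\f v(t),\vv(t))$ is absolutely continuous with a.e.\ derivative $\langle\t\f v,\vv\rangle+\langle\f v,\t\vv\rangle$, and I integrate. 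For~\eqref{intd1} I differentiate $\tfrac12\|\f d-\dd\|_{\Le}^2$ in time, which is justified because $\t\f d\in L^2(\f L^{3/2})$ and $\t\dd\in L^1(\f L^\infty)$ pair against $\f d,\dd\in L^\infty(\f L^\infty)$; the pointwise identity
\[(\f d-\dd)\cdot(\t\f d-\t\dd)=(\f d\times\t\f d-\dd\times\t\dd)\cdot(\dd\times\f d)\]
then follows from the Lagrange identity $(\f a\times\f b)\cdot(\f c\times\f e)=(\f a\cdot\f c)(\f b\cdot\f e)-(\f a\cdot\f e)(\f b\cdot\f c)$ together with $\f d\cdot\t\f d=\dd\cdot\t\dd=0$ and $|\f d|=|\dd|=1$, and integration in time gives~\eqref{intd1}.

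The two remaining inequalities are the core of the proposition. For~\eqref{intd2} I would differentiate the bilinear form,
\[\tfrac{d}{dt}\left(\n\f d;\f\Lambda:\n\dd\right)=\left(\n\t\f d;\f\Lambda:\n\dd\right)+\left(\n\f d;\f\Lambda:\n\t\dd\right),\]
and integrate the first summand by parts in space, moving the elliptic operator onto $\dd$; since $\di(\f\Lambda:\n\dd)=\Lap\dd$ and the boundary term $\int_{\partial\Omega}\t\f d\cdot(\f\Lambda:\n\dd)\f n$ vanishes because $\f d=\dd=\f d_1$ is constant in time on $\partial\Omega$ (so $\t\f d=0$ there), this produces $-(\t\f d,\Lap\dd)$. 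Writing $\t\f d=\rot{\f d}^T\rot{\f d}\t\f d=\rot{\f d}^T(\f d\times\t\f d)$ and adding and subtracting $\rot{\dd}$ for $\rot{\f d}$ yields the target pairing $-(\rot{\dd}\Lap\dd,\rot{\f d}\t\f d)$ together with the correction term in the last line of~\eqref{intd2}, while the second summand is rewritten through $\t\dd=\rot{\dd}^T(\rot{\dd}\t\dd)$ and the product rule for $\n$, giving $(\n\f d;\f\Lambda:\rot{\f d}^T\n(\rot{\dd}\t\dd))$. The genuinely measure-valued pairing $\ll{\nu_t,\f S:\f\Lambda:(\f\Upsilon:(\rot{\dd}\t\dd\o\f S))}$ arises when $\n\f d$ is read as the expectation $\Sr(\nu^o)=\ll{\nu_t,\f S}$ and the Levi--Civita identities relating $\rot{\cdot}$ and $\f\Upsilon$ are used on the term where the derivative falls on $\rot{\cdot}^T$; all other contributions are lower order and are dominated by $\mathcal K\mathcal E$ via H\"older's and Young's inequalities exactly as in~\cite[Lemma~4.2]{weakstrong}.

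Formula~\eqref{intdd} is treated analogously, replacing the second-order form $\n\f d:\f\Lambda:\n\dd$ by the sixth-order form $\n\f d\o\f d\dreidots\f\Theta\dreidots\n\dd\o\dd$ and the constant tensor operator $\Lap$ by the $\f\Theta$-part of the variational derivative, $-\di(\dd\cdot\f\Theta\dreidots\n\dd\o\dd)+\n\dd:\f\Theta\dreidots\n\dd\o\dd$, which is exactly the nonlinear piece of $\tq$ in~\eqref{qdef}; the same substitution $\t\f d=\rot{\f d}^T(\f d\times\t\f d)$, $\t\dd=\rot{\dd}^T(\rot{\dd}\t\dd)$ and the measure-valued representation of $F_{\f S},F_{\f h}$ from~\eqref{FSFh}, inserted through the weak director equation~\eqref{eq:mdir}, generate the four $\ll{\nu_\tau,\cdot}$ terms (again quadratic in $\f S$) and the two $\rot{\f d}\t\f d$-pairings on the right of~\eqref{intdd}. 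In both~\eqref{intd2} and~\eqref{intdd} the reason one obtains an inequality rather than an equality is precisely as in Proposition~\ref{prop:relenab}: the exact time-differentiation also produces contributions from the concentration measure $m_t$ and the recession part $\nu^\infty$ of the generalized Young measure, which are nonnegative and sit on the favourable side, so they may be discarded, leaving the stated $\geq$ with the remainders collected into $-\mathcal K\mathcal E$ (resp.\ $-c\,\mathcal K\mathcal E$).

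The main obstacle is to make the formal time-differentiation of the $\f\Lambda$- and $\f\Theta$-forms rigorous, since $\f d$ is only a measure-valued solution: $\n\f d$ is merely the expectation $\Sr(\nu^o)$ and $\t\f d$ lives only in $L^2(\f L^{3/2})$, so none of the products may be differentiated pointwise in time. I would therefore work entirely within the weak formulation~\eqref{eq:mdir}, with admissible test functions $\f\zeta\in L^2(0,T;\f L^3)$ built from $\rot{\dd}\Lap\dd$ and from $-\di(\dd\cdot\f\Theta\dreidots\n\dd\o\dd)+\n\dd:\f\Theta\dreidots\n\dd\o\dd$, checking their regularity against~\eqref{regtest} (e.g.\ $\Lap\dd\in L^2(\f L^3)$ from $\dd\in L^2(\f W^{2,3})$, and $\n\dd\o\dd\in L^\infty(\f L^\infty)$), and using a time mollification to give meaning to $(\n\t\f d;\cdot)$ and to justify the spatial integration by parts with the inhomogeneous boundary data---where the compatibility $\tr(\f d)=\tr(\dd)=\f d_1$ is what makes the boundary contributions cancel. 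The most delicate part is the bookkeeping of all remainder terms and the verification that each is bounded by $\mathcal K\mathcal E$ with $\mathcal K$ of the form stated in Definition~\ref{def:diss}; this is where the argument relies most heavily on the parallel estimates of~\cite{weakstrong}.
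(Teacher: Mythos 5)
Your proposal is correct and follows essentially the same route as the paper: the paper proves only~\eqref{intd1} in detail, via the identity $\t\f d=\rot{\f d}^T\rot{\f d}\t\f d$ and $\rot{\f a}\f a=0$ (your Lagrange-identity computation is the same argument in vector rather than matrix notation), and for~\eqref{intv},~\eqref{intd2}, and~\eqref{intdd} it cites \cite[Proposition~5.1]{weakstrong}, keeping the time-derivative difference terms on the right-hand side instead of estimating them by \cite[Lemma~4.2]{weakstrong} --- exactly the strategy you sketch, including the deferral of the remainder bookkeeping to the estimates of~\cite{weakstrong}. The only imprecision worth noting is that~\eqref{measreg} places $\t\f v$ in $W^{1,2}(0,T;(\f W^{1,3}_{0,\sigma})^*)$ rather than $W^{1,2}(0,T;\Vd)$, so~\eqref{intv} needs the two-Gelfand-triple version of the standard integration-by-parts lemma, and the quadratic-in-$\f S$ pairing in~\eqref{intd2} cannot come from reading $\nabla\f d$ as the expectation of $\nu^o$ alone but enters through the measure-valued equation~\eqref{eq:q}, as you correctly state for~\eqref{intdd}.
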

\begin{proof}
The proof is similar to the one of~\cite[Proposition~5.1]{weakstrong}.
Formula~\eqref{intv} is already proven there.
To get formula~\eqref{intd1}, we observe for $\phi \in \C_c^\infty(0,T)$ that 
\begin{align}
\begin{split}
-\frac{1}{2}\intte{\phi'\left  \| \f d - \dd \right \|_{\Le}^2 } ={}& \intte{ \phi \left ( \t \f d - \t \dd , \f d - \dd \right )}=   \intte{ \phi \left ( \rot{\f d}^T \rot{\f d} \t \f d - \rot \dd ^T \rot \dd \t \dd , \f d - \dd \right )}\\
={}&  \intte{\phi \left (  \rot {\f d} \t \f d , \rot{\f d- \dd} ( \f d - \dd)\right )} +  \intte{\phi \left ( \rot { \f d} \t \f d - \rot\dd \t \dd , (\rot{ \f d} - \rot \dd ) \dd \right )}\,.
\end{split}\label{equch}
\end{align} 
The first equality is an integration-by-parts as well as the chain rule, the second equation is valid due to~\cite[Lemma~3.2]{weakstrong}, and the last equation follows from a rearrangement. 
The first term on the right-hand side of the above equality chain vanishes, since $ \rot{\f a}\f a=0$ for any vector $\f a \in \R^3$.
From the definition of the weak-derivative, we infer that the left-hand side of the equality chain~\eqref{equch} is the weak derivative of the right-hand side. Thus, it is absolutely continuous and formula~\eqref{intd1} holds.  
For the proofs of the integration-by-parts formulae~\eqref{intd2} and~\eqref{intdd}, we refer again to~\cite[Proposition~5.1]{weakstrong}. Note the last terms on the right-hand side of inequality~\eqref{intd2} and~\eqref{intdd} depending on the difference of the time derivatives of $\f d $ and $\dd$ are in comparison to~\cite[Proposition~5.1]{weakstrong} not estimated by~\cite[Lemma~4.2]{weakstrong}, but kept on the right hand side. 
\end{proof}


\begin{corollary}\label{cor:variational}
Let $(\f v, \f d)$ be a measure-valued solution and $( \vv , \dd )$ be a test function admitting the regularity~\eqref{regtest}. Then it holds 
\begin{align}
\begin{split}
- &\left ( \nabla \f d (t) ; \f \Lambda : \nabla \dd(t)\right ) - \left (  \nabla \f d(t)\o \f d(t) \dreidotkom \f \Theta \dreidots \nabla \dd(t) \o \dd(t)  \right ) 
\\
\leq {}& -  \int_0^t   \left [ ( \f d \times \t \f d  ,\dd \times  \tq) + ( \f d \times \f q ,\dd \times  \t \dd)  \right ]    \de s \\&- 
\left  ( \nabla \f d(0); \f \Lambda : \nabla \dd(0)\right ) - \left ( \nabla \f d(0) \o \f d(0) \dreidotkom \f \Theta \dreidots \nabla \dd(0) \o \dd(0)\right )\\
 & + \frac{1}{2} \mathcal{E}^M(t)  + \int_0^t \mathcal{K}(s) \mathcal{E}^M(s) \de s  +\inttet{\left (  \rot{\dd}\t \dd -  \rot{\f d }\t \f d   ,  \rot{\f d-\dd } \tq+ \frac{|\f \Theta|^2}{k}\| \nabla \dd \o \dd \|_{L^\infty( \f L^\infty)}^2 \f d \times \dd   \right )}\\
&
+\left ((\nabla \f d(0)-\nabla \dd (0))\o (\f d(0)-  \dd(0) ) \dreidots \f \Theta \dreidots \nabla \dd (0)\o \dd(0)\right )
+ \frac{|\f \Theta|^2}{2k}\| \nabla \dd \o \dd \|_{L^\infty( \f L^\infty)}^2 \left \|   \f d(0)- \dd(0)    \right \|_{\Le}^2 
\end{split}\label{auscor}
\end{align}
 for a.e.~$t\in (0,T)$.

\end{corollary}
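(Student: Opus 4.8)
The plan is to assemble~\eqref{auscor} from the integration-by-parts formulae of Proposition~\ref{prop:int} together with the weak director equation~\eqref{eq:q}. First I would negate the inequalities~\eqref{intd2} and~\eqref{intdd}: this turns the lower bounds for $\left(\nabla\f d(t);\f\Lambda:\nabla\dd(t)\right)$ and for $\left(\nabla\f d(t)\o\f d(t)\dreidotkom\f\Theta\dreidots\nabla\dd(t)\o\dd(t)\right)$ into the upper bounds for their negatives appearing on the left-hand side of~\eqref{auscor}. Adding the two negated inequalities, the initial-data terms at $t=0$ carry over verbatim, the defect term $\left((\nabla\f d(0)-\nabla\dd(0))\o(\f d(0)-\dd(0))\dreidots\f\Theta\dreidots\nabla\dd(0)\o\dd(0)\right)$ reappears with the sign displayed in~\eqref{auscor}, and the two $-\mathcal{K}\mathcal{E}$ contributions become $+\mathcal{K}\mathcal{E}$; invoking $\mathcal{E}\leq\mathcal{E}^M$ from Proposition~\ref{prop:relenab} together with $\mathcal{K}\geq 0$, they are controlled by $\int_0^t\mathcal{K}(s)\mathcal{E}^M(s)\,\de s$ once the constant $c$ is absorbed into $\mathcal{K}$.

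The core of the argument is to reorganise the remaining terms into the two cross terms $(\f d\times\t\f d,\dd\times\tq)$ and $(\f d\times\f q,\dd\times\t\dd)$. For the first, I would combine the term $-(\rot{\dd}\Lap\dd,\rot{\f d}\t\f d)$ of~\eqref{intd2} with the term carrying $-\di(\dd\cdot\f\Theta\dreidots\nabla\dd\o\dd)+\nabla\dd:\f\Theta\dreidots\nabla\dd\o\dd$ of~\eqref{intdd}: by the representation~\eqref{qdef}, the sum $-\Lap\dd-\di(\dd\cdot\f\Theta\dreidots\nabla\dd\o\dd)+\nabla\dd:\f\Theta\dreidots\nabla\dd\o\dd$ is precisely $\tq$, so these merge into $(\rot{\f d}\t\f d,\rot{\dd}\tq)=(\f d\times\t\f d,\dd\times\tq)$. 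For the second, I would collect the measure-valued contributions of~\eqref{intd2} and~\eqref{intdd}, i.e. the terms $\ll{\nu_t,\,\cdot\,}$ and the terms carrying $\nabla\f d$, and test the weak equation~\eqref{eq:q} with $\f\psi=\rot{\dd}\t\dd=\dd\times\t\dd$; using the splitting of $F_{\f S}$ and $F_{\f h}$ from~\eqref{FSFh} into their $\f\Lambda$- and $\f\Theta$-parts, the symmetry of $\f\Lambda$ and $\f\Theta$, and the relation $\rot{\cdot}^T=-\rot{\cdot}$, these contributions coincide exactly with the left-hand side of~\eqref{eq:q} and therefore sum to $(\f d\times\f q,\dd\times\t\dd)$. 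Both cross terms enter with a minus sign through the negation, yielding the first integral of~\eqref{auscor}, while the lower-order terms retained on the right-hand sides of~\eqref{intd2} and~\eqref{intdd}, treated again with $\rot{\cdot}^T=-\rot{\cdot}$ and~\eqref{qdef}, assemble into the contribution $(\rot{\dd}\t\dd-\rot{\f d}\t\f d,\rot{\f d-\dd}\tq)$ of~\eqref{auscor}.

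It remains to dispose of the time-$t$ defect term $-\mathcal{C}(t)$, where $\mathcal{C}(t):=\left((\nabla\f d(t)-\nabla\dd(t))\o(\f d(t)-\dd(t))\dreidots\f\Theta\dreidots\nabla\dd(t)\o\dd(t)\right)$, which surfaces after negating~\eqref{intdd}. Using $|\dd|=1$, the Cauchy--Schwarz inequality and Young's inequality, I would estimate
\begin{align*}
-\mathcal{C}(t)\leq\tfrac{k}{2}\|\nabla\f d(t)-\nabla\dd(t)\|_{\Le}^2+\tfrac{|\f\Theta|^2}{2k}\|\nabla\dd\o\dd\|_{L^\infty(\f L^\infty)}^2\|\f d(t)-\dd(t)\|_{\Le}^2\,,
\end{align*}
where the weight is chosen so that the coercivity~\eqref{kill} bounds the first summand by one half of the elastic part of $\mathcal{E}(t)$, hence by $\tfrac12\mathcal{E}(t)\leq\tfrac12\mathcal{E}^M(t)$ via Proposition~\ref{prop:relenab}; this produces the term $\tfrac12\mathcal{E}^M(t)$ of~\eqref{auscor}. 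For the second summand I would apply the identity~\eqref{intd1} to replace $\|\f d(t)-\dd(t)\|_{\Le}^2$ by $\|\f d(0)-\dd(0)\|_{\Le}^2+2\int_0^t(\f d\times\t\f d-\dd\times\t\dd,\dd\times\f d)\,\de s$: the boundary part reproduces precisely the initial correction $\tfrac{|\f\Theta|^2}{2k}\|\nabla\dd\o\dd\|_{L^\infty(\f L^\infty)}^2\|\f d(0)-\dd(0)\|_{\Le}^2$, while the time integral, after rewriting $\dd\times\f d=-\f d\times\dd$, becomes exactly the remaining piece $(\rot{\dd}\t\dd-\rot{\f d}\t\f d,\frac{|\f\Theta|^2}{k}\|\nabla\dd\o\dd\|_{L^\infty(\f L^\infty)}^2\f d\times\dd)$ inside the cross-term integral of~\eqref{auscor}.

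The step I expect to be the main obstacle is the second cross-term identification: verifying that the assembled terms $\ll{\nu_t,\,\cdot\,}$ and the $\nabla\f d$ contributions of~\eqref{intd2} and~\eqref{intdd} agree term by term with the left-hand side of~\eqref{eq:q} evaluated at $\f\psi=\dd\times\t\dd$. This rests on the algebraic splitting of $F_{\f S}$ and $F_{\f h}$ and on the Levi--Civita identities relating $\f\Upsilon$ and $\rot{\cdot}$, exactly the computations performed in~\cite{weakstrong}. A further delicate point is that $\f\psi=\dd\times\t\dd$ is not compactly supported, so~\eqref{eq:q}, stated for test functions in $\mathcal{C}^\infty_c$, must be extended to it by density; this is legitimate because the time-independence of the Dirichlet datum $\f d_1$ forces $\t\dd$, and hence $\f\psi$, to vanish on $\partial\Omega$, so that no boundary contribution appears, while the regularity~\eqref{regtest} and~\eqref{measreg} ensure that every term of~\eqref{eq:q} is continuous in $\f\psi$.
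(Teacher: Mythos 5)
Your proposal is correct and follows essentially the same route as the paper's proof: adding (i.e.\ negating and combining) the integration-by-parts formulae~\eqref{intd2} and~\eqref{intdd}, identifying the cross terms $(\f d\times\t\f d,\dd\times\tq)$ and $(\f d\times\f q,\dd\times\t\dd)$ through the definitions~\eqref{qdef} and~\eqref{eq:q}, estimating the time-$t$ mixed term by Young's inequality with the coercivity~\eqref{kill} to produce $\tfrac12\mathcal{E}^M(t)$, and converting $\|\f d(t)-\dd(t)\|_{\Le}^2$ via~\eqref{intd1} into the initial correction plus the $\f d\times\dd$ cross-term integral. Your only deviations are cosmetic and sound: you perform the Young estimate at the function level (using~\eqref{identify} and $\mathcal{E}\leq\mathcal{E}^M$) where the paper works directly with $\ll{\nu_t,|\f S-\nabla\dd|^2}$, and you make explicit the density/boundary argument for testing~\eqref{eq:q} with $\f\psi=\dd\times\t\dd$, which the paper leaves implicit.
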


\begin{proof}
We add the two integration by parts formulae~\eqref{intd2} and~\eqref{intdd}. Recall the definitions of $\f q$ and $\tq$ in~\eqref{eq:q} and~\eqref{qdef}, respectively. 
For the term in the second line of~\eqref{intdd}, we observe similar to~\cite[Corollary~5.1]{weakstrong} with~\eqref{identify} and Young's inequality that 
\begin{align}
\begin{split}
&\left ((\nabla \f d(t)-\nabla \dd (t))\o (\f d(t)-  \dd(t)) \dreidotkom  \f \Theta \dreidots \nabla \dd (t)\o \dd(t) \right )\\
&\qquad= \int_{\Omega} \int_{\R^{3\times 3}}  \left ( ( \f S - \nabla \dd (\f x, t) ) \o ( \f d (\f x, t) - \dd (t) ) \dreidots \f \Theta \dreidots   \nabla \dd (\f x, t) \o \dd (\f x ,t) \right )  \nu^o _{( \f x ,t)}(\de \f S) \de \f x  \\
&\qquad\leq  \frac{k}{2}\int_{\Omega} \int_{\R^{3\times 3}}  \left | \f S - \nabla \dd (t) \right |^2 \nu^o _{( \f x ,t)}(\de \f S) \de \f x + \frac{|\f \Theta|^2}{2k} \left \|\nabla \dd\o \dd  \right \|^2_{L^\infty( \f L^\infty)} \int_{\Omega}| \f d (t) - \dd (t)|^2  \de \f x  \\
&\qquad \leq \frac{k}{2}\ll{\nu_t,  | \f S - \nabla \dd (t) |^2  }   +  \frac{|\f \Theta|^2}{2k} \left \|\nabla \dd\o \dd  \right \|^2_{L^\infty( \f L^\infty)} \left\| \f d (t) - \dd (t) \right \|_{\Le}^2   \,.
\end{split}\label{Youngint}
\end{align}
Remark that the defect measure $m_t$ is a positive measure, $m_t \in \M^+(\ov \Omega)$.
The constant $k$ is chosen small enough, such that  (compare~\eqref{kill})
\begin{align*}
k\ll{  \nu _{t}, | \f S - \nabla \dd (t) |^2   }  \leq \frac{1}{2}\ll{\nu _{t},  ( \f S - \nabla \dd (t)) :\f \Lambda : ( \f S - \nabla \dd (t))  }  \leq  \mathcal{E}^M(t)\,.
\end{align*} 
Applying formula~\eqref{intd1} gives now the assertion. 
\end{proof}

\subsection{Relative energy inequality\label{sec:main}}
\begin{proof}[Proof of Theorem~\ref{thm:main}]
Considering the relative measure-valued energy, we observe
\begin{align}
\mathcal{E}^M(t) ={}& \ll{\mu_t , 1 }+\ll{\nu_t , F } + \frac{1}{2}\| \f v(t)\|_{\Le}^2  + \F( \dd(t)) + \frac{1}{2}\| \vv(t)\|_{\Le}^2\notag \\
&- ( \f v(t) , \vv (t))  - \left ( \nabla \f d (t) ;\, \f \Lambda : \nabla \dd(t)\right ) - \left (  \nabla \f d(t)\o \f d(t) \dreidotkom \f \Theta \dreidots \nabla \dd(t) \o \dd(t)  \right )  \,.\label{relEnM}
\end{align}
Inserting the integration-by-parts formula~\eqref{intv} and Corollary~\ref{cor:variational}, we find
\begin{align*}
\mathcal{E}^M(t) 
\leq {}&\ll{\mu_t , 1 }+\ll{\nu_t , F } + \frac{1}{2}\| \f v(t)\|_{\Le}^2  + \F( \dd(t)) + \frac{1}{2}\| \vv(t)\|_{\Le}^2 \\
{}& -  \int_0^t\left [( \f v  , \t \vv  ) + ( \t \f v  , \vv) \right ]\de s -  \int_0^t   \left [ ( \f d \times \t \f d  ,\dd \times  \tq) + ( \f d \times \f q ,\dd \times  \t \dd)  \right ]    \de s \\&- ( \f v(0), \vv(0))- 
\left  ( \nabla \f d(0); \f \Lambda : \nabla \dd(0)\right ) - \left ( \nabla \f d(0) \o \f d(0) \dreidotkom \f \Theta \dreidots \nabla \dd(0) \o \dd(0)\right )\\
 & + \frac{1}{2} \mathcal{E}^M(t)  +  \int_0^t \mathcal{K}(s)\mathcal{E}^M(s) \de s  +\inttet{\left (  \rot{\dd}\t \dd - \rot{\f d }\t \f d  ,  \rot{\f d -\dd} \tq+ \frac{2|\f \Theta|^2}{k}\| \nabla \dd \o \dd \|_{L^\infty( \f L^\infty)}^2 \f d \times \dd   \right )}\\
&
+\left ((\nabla \f d(0)-\nabla \dd (0))\o (\f d(0)-  \dd(0))\dreidotkom \f \Theta \dreidots \nabla \dd (0)\o \dd(0)\right ) 
+ \frac{|\f \Theta|^2}{2k}\| \nabla \dd \o \dd \|_{L^\infty( \f L^\infty)}^2 \left \|   \f d(0)- \dd(0)    \right \|_{\Le}^2 \,.
\end{align*}
Inserting the energy inequality~\eqref{energyin} for the measure-valued solution, the shifted energy equality (see~Proposition~\ref{prop:shift}), and adding the relative dissipation~\eqref{relW} on the left-hand side,  yields
\begin{align}
\begin{split}
\mathcal{E}^M(t) + \int_0^t\mathcal{W}(s) \de s \leq{}& \F( \f d_0) + \frac{1}{2}\| \f v_0\|_{\Le}^2 + \F( \dd_0) + \frac{1}{2}\| \vv_0\|_{\Le}^2 \\
&- 2 (\mu_1 +\lambda^2 )\int_0^t ( \f d \cdot \sy v \f d , \dd\cdot \syv  \dd )   \de s   \\
& - 2( \mu_5 + \mu_6 - \lambda ^2) \int _0^t ( \sy v \f d , \syv \dd ) \de s \\
& - 2 \mu_4 \int_0^t ( \sy v ; \syv ) \de s - 2  \int_0^t ( \f d\times \f q , \dd \times \tq ) \de s  + \int_0^t \langle \f g , \f v + \vv \rangle \de s \\
& -  \int_0^t\left [( \f v  , \t \vv  ) + ( \t \f v  , \vv) \right ]\de s -  \int_0^t   \left [ ( \f d \times \t \f d  ,\dd \times  \tq) + ( \f d \times \f q ,\dd \times  \t \dd)  \right ]    \de s 
\\&- ( \f v(0), \vv(0))- 
\left  ( \nabla \f d(0); \f \Lambda : \nabla \dd(0)\right ) - \left ( \nabla \f d(0) \o \f d(0) \dreidotkom \f \Theta \dreidots \nabla \dd(0) \o \dd(0)\right )\\
&
+\left ((\nabla \f d(0)-\nabla \dd (0))\o (\f d(0)-  \dd(0))\dreidotkom \f \Theta \dreidots \nabla \dd (0)\o \dd(0)\right ) + \frac{1}{2} \mathcal{E}^M(t)  +  \int_0^t\mathcal{K}(s) \mathcal{E}^M(s) \de s 
\\
 &  +\inttet{\left ( \rot{\dd}\t \dd  - \rot{\f d }\t \f d ,  \rot{\f d-\dd } \tq+ \frac{|\f \Theta|^2}{k}\| \nabla \dd \o \dd \|_{L^\infty( \f L^\infty)}^2 \f d \times \dd   \right )}\\
&+ \frac{|\f \Theta|^2}{2k}\| \nabla \dd \o \dd \|_{L^\infty( \f L^\infty)}^2 \left \|   \f d(0)- \dd(0)    \right \|_{\Le}^2  + \inttet{ \left (\mathcal{A}, \begin{pmatrix}
 \vv \\ \dd \times \tq
 \end{pmatrix}\right )}
 \, .
\end{split}
\label{vormeins}
\end{align}

Regarding the terms incorporating the initial values, we observe
\begin{align*}
\F( \f d_0) + \frac{1}{2}\| \f v_0\|_{\Le}^2 + \F( \dd_0) + \frac{1}{2}\| \vv_0\|_{\Le}^2- ( \f v(0), \vv(0)) - 
\left  ( \nabla \f d(0); \,\f \Lambda : \nabla \dd(0)\right )&\\ - \left ( \nabla \f d(0) \o \f d(0) \dreidotkom \f \Theta \dreidots \nabla \dd(0) \o \dd(0)\right ) &= \mathcal{E}(0)\, .
\end{align*}
In the next step, we use that $(\f v, \f d)$ is a measure-valued solution.
We consider equation~\eqref{eq:velo} tested with $\vv$ and the equation~\eqref{eq:mdir} tested with $\dd \times \tq$ added up:
\begin{align*}
-{}&\inttett{( \t \f v , \vv) + ( \f d \times \t \f d , \dd \times \tq) + \mu_4 \left ( \sy v ; \syv \right ) + \left ( \f d \times \f q , \dd \times \tq\right )}\\
={}& \inttett{\left ( ( \f v \cdot \nabla ) \f v , \vv  \right )+ \mu_1 \left ( \f d \cdot \sy v , \f d \cdot\syv \f d\right ) + ( \mu_5+\mu_6) \left ( \sy v \f d , \syv \f d \right ) }\\
& + \inttett{\lambda \left ( \syv \f d , \f e\right ) + \lambda \left ( \sy v \f d , \skv \f d  \right ) + \left (\f e , \skv \f d \right )}\\
& -\inttett{\ll{\nu_t , \f S^T F_{\f S}( \f h ,\f S) : \nabla \vv }+ 2 \ll{\mu_t, \f \Gamma \dreidots (\f \Gamma\cdot \nabla \vv) }}\\
&+ \inttett{\left ( \rot{\f d }( \f v \cdot \nabla ) \f d , \rot \dd \tq \right ) - \left ( \rot{\f d} \sk v \f d , \rot{\dd} \tq \right )+ \lambda  \left ( \rot{\f d} \sy v \f d , \rot\dd \tq\right )}\,.
\end{align*}
Due to \cite[Corollary~3.1]{weakstrong}, it holds that $ \f e = -\rot{\f d}^T \rot{\f d} (\lambda \sy v \f d + \f q )$. Replacing $\f e$ in the above equation and using $\rot{\f d}^T\rot{\f d}= I-\f d \o \f d$ yields
\begin{align}
\begin{split}
-{}&\inttett{( \t \f v , \vv) + ( \f d \times \t \f d , \dd \times \tq) + \mu_4 \left ( \sy v ; \syv \right ) + \left ( \f d \times \f q , \dd \times \tq\right )}\\
={}& \inttett{\left ( ( \f v \cdot \nabla ) \f v , \vv  \right )+ (\mu_1+\lambda^2) \left ( \f d \cdot \sy v , \f d \cdot\syv \f d\right ) + ( \mu_5+\mu_6-\lambda^2 ) \left ( \sy v \f d , \syv \f d \right ) }\\
& - \inttett{\lambda \left (\f d \times  \syv \f d ,\f d \times  \f q\right )
+ \left (\f d \times \f q ,\f d \times  \skv \f d \right )+\ll{\nu_t , \f S^T F_{\f S}( \f h ,\f S) : \nabla \vv }+ 2 \ll{\mu_t, \f \Gamma \dreidots( \f \Gamma\cdot \nabla \vv) }}\\
&+ \inttett{\left ( \rot{\f d }( \f v \cdot \nabla ) \f d , \rot \dd \tq \right ) - \left ( \rot{\f d} \sk v \f d , \rot{\dd} \tq \right )+ \lambda  \left ( \rot{\f d} \sy v \f d , \rot\dd \tq\right )}\,.
\end{split}\label{measeq}
\end{align}
Remark that $\f d\cdot \skv \f d=0$. 
The main difference to the weak-strong uniqueness proof in~\cite{weakstrong} is, that we now add a zero to the right hand side by simultaneously adding and subtracting equation~\eqref{nav} and equation~\eqref{dir} evaluated at $(\vv,\dd)$ and tested with  $\f v$ and  $\f d\times \f q$, respectively. 
Replacing again $\tilde{\f e}$ by $ -(I-\dd \o \dd)(\lambda \syv \dd + \tq)$ leads to 
\begin{align}
\begin{split}
-{}& \inttett{\left ( \t \vv ,\f v \right ) + \left ( \dd \times \t \dd , \f d \times \f q\right ) + \mu_4 \left ( \syv, \sy v \right ) + \left ( \dd \times \tq , \f d \times \f q \right ) }\\
={}& \inttett{\left ( ( \vv \cdot \nabla ) \vv , \f v  \right )+ (\mu_1+\lambda^2) \left (\dd \cdot \syv \dd , \dd \cdot \sy v \dd \right ) + ( \mu_5+ \mu_6-\lambda^2) \left ( \syv \dd , \sy v \dd \right ) }\\
&-\inttett{\lambda \left (\dd\times \tq ,\dd \times  \sy v \dd \right )
+ \left (\dd\times\tq , \dd\times\sk v \dd\right )+\left (\rot \dd\left ( ( \vv \cdot \nabla) \dd -\skv \dd +\lambda \syv \dd\right ), \rot \dd  \f q \right )}\\
&- \inttett{  \left (\nabla \dd ^T F_{\f S}( \dd , \nabla \dd) ; \nabla \f v \right )+\left (\mathcal{A}(s), \begin{pmatrix}
\f v \\ \f d \times \f q 
\end{pmatrix} \right )}
\,.
\end{split}\label{strongeq}
\end{align}
Remark that $\dd\cdot \sk v \dd =0$.
Inserting the equations~\eqref{measeq} and~\eqref{strongeq} in~\eqref{vormeins} and using Lemma~\eqref{lem:timederi} yields 
\begin{align*}
\begin{split}
\frac{1}{2}\mathcal{E}^M(t)& + \int_0^t\mathcal{W}(s) \de s 
\\
\leq{}& \mathcal{E}(0)  + \delta\int_0^t\mathcal{W}(s) \de s +  \int_0^t\mathcal{K}(s) \mathcal{E}^M(s)\de s + \int_0^t \left [( ( \vv \cdot \nabla ) \vv , \f v ) + ( ( \f v \cdot\nabla ) \f v , \vv ) \right ]\de s \\
& + (\mu_1 +\lambda^2)\int_0^t  \left [  \left ( \dd \cdot \syv \dd , \sy v : \left ( \dd \o \dd - \f d \o \f d   \right )   \right ) +  \left ( \f d  \cdot \sy v \f d  , \syv : \left ( \f d  \o \f d  - \dd \o \dd   \right )   \right ) \right ] \de s \\
& + ( \mu_5 + \mu_6  -\lambda^2) \int_0^t  \left [\left ( \syv \dd , \sy v ( \dd - \f d )\right  )  + \left (    \sy v \f d , \syv ( \f d - \dd ) \right ) \right ]   \de t\\
& +\inttet{\left [ \left (\dd \times \skv \dd- \f d \times \skv \f  d , \f d \times \f q \right )+ \left (\f d \times \sk v \f d- \dd \times \sk v \dd  , \dd \times \tq \right )  \right ]}\\
& -  \lambda \int_0^t \left [\left ( \dd\times( \sy v - \syv)\dd , \dd \times \tq \right )+ \left ( \f d \times (\syv - \sy v)\f d  , \f d \times \f q \right )\right ]  \de s
 \\
&+ \int_0^t \left [(\dd \times ( \vv \cdot \nabla ) \dd ,\f d \times \f q) + (\f d \times( \f v \cdot \nabla )\f d ,\dd \times  \tilde{\f q})\right ] \de s \\
&-\int_0^t\left [ ( \nabla \dd^T F_{\f S}(\dd,\nabla \dd) ; \nabla \f v )  + \ll{ \nu_s ,  \f S ^T F_{\f S}(\f h , \f S) : \nabla \vv    }\right ]\de s  -2\int_0^t \ll{\mu_s, \f \Gamma \dreidots   (\f \Gamma \cdot \nabla \vv) } \de s 
\\
&
+\left ((\nabla \f d(0)-\nabla \dd (0))\o (\f d(0)-  \dd(0))\dreidotkom \f \Theta \dreidots \nabla \dd (0)\o \dd(0)\right ) + \frac{|\f \Theta|^2}{k}\| \nabla \dd \o \dd \|_{L^\infty( \f L^\infty)}^2 \left \|   \f d(0)- \dd(0)    \right \|_{\Le}^2 
\\
& + \inttet{ \left (\mathcal{A}(s), \begin{pmatrix}
 \vv- \f v  \\ \dd \times \tq - \f d \times \f q + \rot{\f d- \dd  } \tq+ \frac{2|\f \Theta|^2}{k}\| \nabla \dd \o \dd \|_{L^\infty( \f L^\infty)}^2 \f d \times \dd 
 \end{pmatrix}\right )}
\end{split}
\end{align*}
The terms on the right-hand side independant of the initial values and $\mathcal{A}$  can be estimated by integrals over $\mathcal{E}^M$ and $\mathcal{W}$. This is done explicitly in~\cite[Proposition~6.1]{weakstrong}. 
Inserting these estimates, yields
\begin{align}\label{relEnergyinequ}
\begin{split}
\frac{1}{2}\mathcal{E}^M(t)& + \int_0^t\mathcal{W}(s) \de s 
\\
\leq{}& \mathcal{E}(0) +\left ((\nabla \f d(0)-\nabla \dd (0))\o (\f d(0)-  \dd(0))\dreidotkom \f \Theta \dreidots \nabla \dd (0)\o \dd(0)\right ) + \frac{|\f \Theta|^2}{k}\| \nabla \dd \o \dd \|_{L^\infty( \f L^\infty)}^2 \left \|   \f d(0)- \dd(0)    \right \|_{\Le}^2  
\\&
 + \delta\int_0^t \mathcal{W}(s)\de s+\int_0^t\mathcal{K}(s) \mathcal{E}^M(s)\de s   + \inttet{ \left (\mathcal{A}(s), \begin{pmatrix}
 \vv- \f v  \\  \f d \times (\tq -  \f q)- \frac{2|\f \Theta|^2}{k}\| \nabla \dd \o \dd \|_{L^\infty( \f L^\infty)}^2 \f d \times \dd 
 \end{pmatrix}\right )}
\end{split}
\end{align}
We choose $\delta=1/2$ and absorb the relative dissipation $\mathcal{W}$ on the left-hand side.
A version of Gronwall's inequality implies~\eqref{relenin} for the relative measure-valued energy $\mathcal{E}^M$ instead of $\mathcal{E}$.  Proposition~\ref{prop:relenab} implies the assertion.

\end{proof}
\begin{remark}
With the preceding proof, we especially showed that the regularized system in~\cite{masswertig} converges for vanishing regularization to a dissipative solution.
The numerical approximation of a solution via this regularization technique seems tedious.
One needs a higher order scheme to discretize the 4$^\text{th}$ order differential operator of the regularization. Additionally, one has to consider three different limits: The discretization limit as well as the vanishing regularization and penalization limit. The interchange or simultaneous convergence of this limits is not clear. 
Therefore, we propose in a future article~\cite{approx}  a semi-discrete approximation that fulfills the norm-restriction in every step and converges to a dissipative solution (see Definition~\ref{def:diss} and ~\cite{approx}).
\end{remark}
\begin{remark}
In view of the existence proof of measure-valued solutions~\cite{masswertig}, the existence of dissipative solutions~(see Definition~\ref{def:diss}) can also be proven via the regularization technique used in~\cite{masswertig}. Proving the relative energy inequality~\eqref{relenin} for the regularized system, one obtains dissipative solutions for vanishing regularization.  

\end{remark}

\section{Relative energy inequality for different boundary values and with electromagnetic field effects  \label{sec:5}}
In this section, we want to argue that the relative energy inequality even holds for solutions to different boundary values. 
For the velocity field $\f v$, we always assume homogeneous Dirichlet boundary conditions, i.e., $\tr(\f v) =0$. For the director field constant boundary values in time of a certain regularity and of unit length are assumed for the existence theory, i.e., $\tr (\f d) = \f d_1 $ with $|\f d_1|=1 $ and $\f d _1\in \Hrand{7}$. 
\subsection{Estimates for different boundary values}
In the proof of the relative-energy inequality in~\cite{weakstrong}, the only argument, which relies on the fact that the measure-valued solution~$\f d $ and the strong solution or test function~$\dd$ fulfill the same boundary conditions is the assertion of Lemma~\cite[Lemma~4.1]{weakstrong}.
There, we applied~\cite[Proposition~5.1]{masswertig} to the difference $\f d - \dd$, which fulfills homogeneous Dirichlet boundary conditions, if $\f d $ and $\dd$ fulfill the same boundary conditions. 
This can be generalized to non-equal boundary conditions, which is the assertion of the following Proposition that essentially states that an adapted version of Lemma~4.1 in~\cite{weakstrong} remains true if $\f d$ and $\dd $ do not fulfill the same boundary condition.
\begin{lemma}\label{Sobolev}
Let $\f d$ be a measure-valued solution (see~Definition~\ref{def:meas}), $( \nu,m,\nu^\infty) $ the associated generalized Young measure, and $\dd$ a function fulfilling~\eqref{regtest} as well as $|\tr(\dd)|=1$. Let the associated relative energy be given as above (see\eqref{relEnM}). Then there exists a constant $c>0$ such that
\begin{subequations}
\begin{align}
\int_0^t \| \f d(s)- \dd (s)  \|_{\f L^6}^2 \de s + \int_0^t \ll{\nu_s, | \f S - \nabla \dd(s) |^2}\de s \leq  c {}&\int_0^t \left [\mathcal{E}^M(s)+ \| \f d(s)-\dd(s)\|_{\Le}^2 \right ]\de s \,,\label{absch1}\\
\int_0^t \ll{\nu_s , \left | \f \Theta \dreidots \left ( \f S \o \f d - \nabla \dd (s)\o \dd(s) \right ) \right |^2 }\de s \leq c {}&\int_0^t \mathcal{E}^M(s)\de s \,,\label{absch2}\\
\int_0^t \ll{\nu_s , | \f S - \nabla \dd (s)|^2 |\f h - \dd(s) | ^2 } \de s \leq c(1 +\|\nabla \dd \|_{L^\infty(\f L^3)} ^2 + \| \dd \|_{L^\infty(\f L^\infty)}^2){}& \int_0^t \left [\mathcal{E}^M(s)+ \| \f d(s)-\dd(s)\|_{\Le}^2 \right ]\de s\,,\label{absch3}
\\
\int_0^t \| \f d (s) - \dd(s)\|_{\f L^{12}}^4 \de s \leq c(1 +\|\nabla \dd \|_{L^\infty(\f L^3)} ^2 + \| \dd \|_{L^\infty(\f L^\infty)}^2){}& \int_0^t \left [\mathcal{E}^M(s)+ \| \f d(s)-\dd(s)\|_{\Le}^2 \right ] \de s\,.\label{absch4}
\end{align}
\end{subequations}
\end{lemma}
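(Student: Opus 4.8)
For \eqref{absch1} I would first control the gradient variance by the relative energy, $\ll{\nu_s,|\f S-\nabla\dd|^2}\le c\,\mathcal{E}^M(s)$, exactly as in the proof of Corollary~\ref{cor:variational}. These four bounds are the time-integrated, boundary-generalised versions of \cite[Lemma~4.1]{weakstrong}. As the surrounding text stresses, the matching of the traces of $\f d$ and $\dd$ was used there at exactly one point, namely when \cite[Proposition~5.1]{masswertig} was applied to $\f d-\dd$, which then had zero trace. My plan is to rerun each estimate with the non-zero-trace form of the underlying Sobolev/coercivity inequalities; the price is that the full $\He$-norm of $\f d-\dd$ appears, and since no Poincar\'e gain is available the surviving $\|\f d-\dd\|_{\Le}^2$ lands on the right-hand side of every line. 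The standing tools are the coercivity \eqref{kill} of $\f\Lambda$; Jensen's inequality for the gradient Young measure, built on \eqref{identify} and the quasiconvexity of $\f S\mapsto\f S:\f\Lambda:\f S$ used in Proposition~\ref{prop:relenab}; the embedding $\He\hookrightarrow\f L^6$ in three dimensions; and the pointwise bound $|\f d-\dd|\le 2$ from $|\f d|=|\dd|=1$.

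For \eqref{absch1}, Jensen together with \eqref{identify} yields $\|\nabla(\f d-\dd)(s)\|_{\Le}^2\le \ll{\nu_s,|\f S-\nabla\dd|^2}\le c\,\mathcal{E}^M(s)$. Inserting this into $\|\f d-\dd\|_{\f L^6}^2\le c\|\f d-\dd\|_{\He}^2=c(\|\f d-\dd\|_{\Le}^2+\|\nabla(\f d-\dd)\|_{\Le}^2)$ and integrating in time gives \eqref{absch1}, the $\Le$-term persisting precisely because $\f d-\dd$ no longer vanishes on $\partial\Omega$. Estimate \eqref{absch2} is purely algebraic and boundary-blind: expanding $\f\Theta\dreidots(\f S\o\f d-\nabla\dd\o\dd)$ with \eqref{ThetaOF} displays it as a fixed linear combination of $\tr(\f S)\f d-(\di\dd)\dd$, $(\f S)_{\skw}:\rot{\f d}-(\nabla\dd)_{\skw}:\rot{\dd}$ and $(\f S)_{\skw}\f d-(\nabla\dd)_{\skw}\dd$, the very quantities whose squares make up the expanded relative energy, whence $|\f\Theta\dreidots(\f S\o\f d-\nabla\dd\o\dd)|^2\le c\,(\f S\o\f d-\nabla\dd\o\dd)\dreidots\f\Theta\dreidots(\f S\o\f d-\nabla\dd\o\dd)$ and integration against $\nu_s$ bounds it by $c\,\mathcal{E}^M(s)$.

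The last two are interpolation estimates. For \eqref{absch4} I would interpolate $\|\f d-\dd\|_{\f L^{12}}\le\|\f d-\dd\|_{\f L^\infty}^{1/2}\|\f d-\dd\|_{\f L^6}^{1/2}$ and use $\|\f d-\dd\|_{\f L^\infty}\le 2$ to obtain $\|\f d-\dd\|_{\f L^{12}}^4\le 4\|\f d-\dd\|_{\f L^6}^2$, so that \eqref{absch4} drops out of \eqref{absch1}. For \eqref{absch3} I would separate the director factor: in the oscillation part $\f h$ is frozen to $\f d(\f x,t)$, so the integrand factorises as $|\f d-\dd|^2\,\ll{\nu^o,|\f S-\nabla\dd|^2}$, and after the variance splitting $\ll{\nu^o,|\f S-\nabla\dd|^2}=\ll{\nu^o,|\f S-\nabla\f d|^2}+|\nabla(\f d-\dd)|^2$ one closes the two resulting products by H\"older and Gagliardo--Nirenberg, reabsorbing $\|\f d-\dd\|_{\f L^6}$ and $\|\f d-\dd\|_{\f L^{12}}$ via \eqref{absch1} and \eqref{absch4}; the prefactor $c(1+\|\nabla\dd\|_{L^\infty(\f L^3)}^2+\|\dd\|_{L^\infty(\f L^\infty)}^2)$ is exactly the book-keeping of these H\"older steps.

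I expect the real work to sit in \eqref{absch3}. First, the concentration and defect data $(m_t,\nu^\infty)$ cannot be ignored: one must verify that $|\f S-\nabla\dd|^2|\f h-\dd|^2$ grows only quadratically in $\f S$ once the director slot is confined to the unit sphere, so that its recession function in the sense of \eqref{transi} exists, is nonnegative, and is again dominated by $\mathcal{E}^M$. Second, the quartic coupling of gradient variance and director difference must be closed purely in terms of the relative energy and $\|\f d-\dd\|_{\Le}^2$, with no Poincar\'e-type improvement available; matching the exact constant claimed in \eqref{absch3} while keeping track of the extra $\dd$-norms is the delicate point. The passage from these pointwise-algebraic and interpolation bounds to the integrated statements then follows \cite[Lemma~4.1]{weakstrong} essentially line by line.
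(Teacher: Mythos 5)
Your proposal breaks down at two load-bearing points. First, for \eqref{absch1} you open the chain with $\ll{\nu_s,|\f S-\nabla\dd(s)|^2}\le c\,\mathcal{E}^M(s)$, ``exactly as in the proof of Corollary~\ref{cor:variational}''. That is precisely the step that fails once $\f d$ and $\dd$ carry different traces: the $\f\Lambda$-part of the relative energy only measures $k_1(\di\f a)^2+2k_2|(\nabla\f a)_{\skw}|^2$, and the coercivity \eqref{kill} promoting this to control of the full gradient is a div--curl rigidity that requires zero-trace fields (in Corollary~\ref{cor:variational} the generating sequence and $\dd$ share the boundary datum $\f d_1$, so the differences have vanishing trace). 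Without boundary conditions it is false: for $\f a=\nabla\phi$ with $\phi$ harmonic and nonconstant one has $\di\f a=0$ and $(\nabla\f a)_{\skw}=0$, so $\nabla\f a:\f\Lambda:\nabla\f a\equiv 0$ while $\|\nabla\f a\|_{\Le}\neq 0$. The paper therefore replaces \eqref{kill} by the G\aa rding-type inequality $c^{-1}\|\nabla\f a\|_{\Le}^2\le\left(\nabla\f a:\f\Lambda:\nabla\f a\right)+\|\f a\|_{\Le}^2$ (McLean, Theorem~4.6), applied to $\fn d-\dd$ along an $\He$-approximating sequence of $\nu$ and passed to the limit; the term $\|\f d-\dd\|_{\Le}^2$ thus enters already in the gradient-variance bound \eqref{erster}, and not, as you assert, only through the embedding $\He\hookrightarrow\f L^6$. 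Your final inequality for \eqref{absch1} happens to be the correct one, but the first link of your chain is false and must be replaced by the G\aa rding argument, which your overview gestures at but your detailed derivation contradicts.

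Second, \eqref{absch3}, which you yourself identify as ``the real work'', is left unproven. Your factorization with $\f h$ frozen to $\f d(\f x,t)$ treats only the oscillation part, and even there the proposed H\"older/Gagliardo--Nirenberg closing does not go through with the available integrability (it would require the Young-measure variance in $L^{3/2}(\Omega)$ in space, or $\nabla(\f d-\dd)\in\f L^3$, neither of which is controlled); note incidentally that, since $|\f d|=|\dd|=1$ a.e., the crude bound $|\f h-\dd|\le 2$ makes the oscillation part an immediate corollary of \eqref{absch1} with no interpolation at all. The genuine content is the concentration part, for which no pointwise argument can work: the inequality $|\f S|^2|\f h|^2\le c\,\f S\o\f h\dreidots\f\Theta\dreidots\f S\o\f h$ is false, so the recession of the quartic integrand against $(m_t,\nu^\infty)$ is not pointwise dominated by the integrand of $\mathcal{E}^M$. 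The paper's mechanism is instead the integrated inequality of \cite[Proposition~5.1]{masswertig} generalized to nonzero traces: for $\f a\in\Hc$ with $|\f a|\le c$ one has $\int_\Omega|\f a|^2|\nabla\f a|^2\de\f x\le 4\left(\nabla\f a\o\f a\dreidotkom\f\Theta\dreidots\nabla\f a\o\f a\right)$ up to boundary terms estimated through the $\Hrand{1}$--$\Hrand{-1}$ duality pairings, and this is applied along an $\Hc$-approximating sequence with $\tr(\fn d)=\f d_1$ and passed to the limit in the generalized-Young-measure sense, which is what controls oscillation and concentration simultaneously and produces the stated prefactor. On the positive side, your treatment of \eqref{absch2} matches the paper's (purely algebraic, boundary-blind), and your derivation of \eqref{absch4}---interpolating $\|\f d-\dd\|_{\f L^{12}}\le\|\f d-\dd\|_{\f L^\infty}^{1/2}\|\f d-\dd\|_{\f L^6}^{1/2}$ with $|\f d-\dd|\le 2$ and reducing to \eqref{absch1}---is correct and genuinely simpler than the paper's route, which deduces \eqref{absch4} from \eqref{absch3} via $\nabla|\f d-\dd|^2=2(\nabla(\f d-\dd))^T(\f d-\dd)$, Jensen's inequality, and Gagliardo--Nirenberg; but this shortcut does not rescue \eqref{absch3}, which the lemma needs in its own right.
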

\begin{proof}
The proof is similar to the proof of~\cite[Lemma~4.1]{weakstrong}. We only focus on the points that change in comparison to the previous result.
First, we observe that for a strongly elliptic tensor the associated Norm is coercive on $\Hb$, i.e., there exists a constant $c> 0$ such that  the estimate
\begin{align*}
\frac{1}{c}\| \nabla \f d \|_{\Le } ^2 \leq   \left ( \nabla \f d : \f \Lambda : \nabla \f d \right )  +  \| \f d \|_{\Le}^2 \,
\end{align*}
holds for all $\f d \in \He$ (see McLean~\cite[Theorem~4.6]{mclean}). 
For the estimate~\eqref{absch1}, we conclude in the same way as in the proof of~\cite[Lemma~4.1]{weakstrong}. Approximating the generalized Young measure $\nu_t$ by a sequence $\{\fn d\}_{n\in\N}\subset \He $ yields  
\begin{align*}
\| \nabla \fn d - \nabla \dd \|_{\Le}^2 \leq c \left ( \nabla \fn d -\nabla \dd  : \f \Lambda :( \nabla \fn d-\nabla \dd)  \right ) +c  \| \fn d - \dd\|_{\Le}^2   \,. 
\end{align*} 
Going to the limit $n\ra \infty$ in the above inequality using the convergence in the sense of generalized gradient Young measures and the strong convergence of $\fn d $ in $\Le$ gives 
\begin{align}\label{erster}
 \int_0^t \ll{\nu_s, | \f S - \nabla \dd(s) |^2}\de s \leq  c {}&\int_0^t \left [\mathcal{E}^M(s)+ \| \f d(s)-\dd(s)\|_{\Le}^2 \right ]\de s \,.
\end{align}
Jensens's inequality implies in the same way as in the proof of~\cite[Lemma~4.1]{weakstrong} that
\begin{align}\label{zweiter}
\| \nabla \f d(s) - \nabla \dd(s) \|_{\Le}^2 \leq  \ll{\nu_s, | \f S - \nabla \dd(s) |^2} \,.
\end{align}
The embedding in three space dimensions $\He \hookrightarrow \f L^6$ implies 
\begin{align}\label{dritter}
\| \f d(s) - \dd(s) \|_{\f L^6} ^2 \leq c \left ( \| \nabla \f d(s) - \nabla \dd(s) \|_{\Le}^2 + \| \f d (s) - \dd(s ) \|_{\Le}^2 \right ) \,.
\end{align}
The three inequalities~\eqref{erster},~\eqref{zweiter}, and~\eqref{dritter} yield the conclusion~\eqref{absch1}.

Inequality~\eqref{absch2} remains valid even for different boundary values since it relies on a purely algebraic relation (see~\cite[Proposition~A.1]{weakstrong}).

To get inequality~\eqref{absch3}, we use the integration by parts formula derived in~\cite[Propostion~5.1]{masswertig}. 
For  functions $\f a \in \Hc$, it holds 
\begin{align*}
\int_{\Omega} | \f a|^2 | \nabla \f a|^2 \de \f x & = \int_{\Omega} ( ( \di \f a)^2 | \f a|^2 + |\f a |^2| \curl \f a |^2 + (\tr ( \nabla \f a ^2 ) - ( \di \f a ) ^2 ) | \f a|^2 ) \de \f x \\
& = \int_{\Omega} ( ( \di \f a)^2 | \f a|^2 + ( \f a \cdot \curl \f a )^2 + | \f a \times \curl \f a |^2) \de \f x \\ 
& \quad + \int_{\Omega} \di ( ( \nabla \f a \f a - ( \di \f a ) \f a ) | \f a |^2)\de \f x \\
& \quad  +\int_{\Omega} | \f a \times \curl \f a|^2 - | \nabla \f a \f a|^2 - | (\nabla \f a)^T \f a  |^2   \de \f x\\ 
& \quad + \int_{\Omega} ( \di \f a) \f a \cdot \nabla \f a  \f a+ ( \di \f a) \f a \cdot (\nabla \f a)^T  \f a  \de \f x
 \,.
\end{align*}
Young's  inequality and the divergence theorem imply
\begin{align*}
\int_{\Omega} | \f a|^2 | \nabla \f a|^2 \de \f x 
& \leq  \int_{\Omega} \left ( \frac{3}{2}( \di \f a)^2 | \f a|^2 + ( \f a \cdot \curl \f a )^2 + 2| \f a \times \curl \f a |^2\right ) \de \f x \\ 
& \quad + \int_{\partial\Omega} \f n\cdot  ( ( \nabla \f a \f a - ( \di \f a ) \f a ) | \f a |^2)\de \f S \,.
\end{align*}
For any function~$ \f a \in \Hc $, we observe 
\begin{align*}
\int_{\Omega} | \f a|^2 | \nabla \f a|^2 \de \f x 
 \leq {} & 4 \left ( \nabla \f a \o \f a \dreidotkom \f \Theta \dreidots \nabla \f a\o \f a \right )  
+  \langle \f \gamma_{\f n} (\nabla \f a ), \f \gamma_0( \f a | \f a|^2   ) \rangle _{\Hrand{-1},\Hrand{1}}\\
 &+ \langle \f \gamma_{\f n} ( \f a| \f a|^2 ), \f \gamma_0( \di \f a  ) \rangle _{\Hrand{1},\Hrand{-1}}  \\
 \leq{}& 4 \left ( \nabla \f a \o \f a \dreidotkom \f \Theta \dreidots \nabla \f a\o \f a \right )   + \| \nabla \f a \|_{\Hrand{-1}} \| \f a | \f a |^2 \|_{\Hrand{1}} + \| \di \f a \|_{\Hrand{-1}}\| \f a | \f a|^2 \|_{\Hrand{1}}\,.
\end{align*}
As long as $ | \f a |\leq c$ for a constant $c>0$, we observe
$ \| \f a| \f a |^2 \|_{\Hrand{1}}  \leq 3 c ^2  \| \f a\|_{\Hrand{1}} \,.
$
The continuous embeddings $\He \hookrightarrow \Hrand{1}$ and $\Le \hookrightarrow \Hrand{-1}$ (see~\cite[Theorem~3.37]{mclean}) guarantee that
\begin{align*}
\int_{\Omega} | \f a|^2 | \nabla \f a|^2 \de \f x 
 \leq {} & 4 \left ( \nabla \f a \o \f a \dreidotkom \f \Theta \dreidots \nabla \f a\o \f a \right )   + c \| \nabla \f a \|_{\Le}^2+ c  \| \f a  \|_{\He} ^2 \,.
\end{align*}
Approximating again the generalized Young measure $\nu_t$ by a sequence $\{\fn d (t)\}_{n \in \N} \subset \Hc$ (compare to~\cite[Definition~2]{weakstrong}), we find
\begin{align*}
\left \|\nabla \fn d -\nabla \dd | \fn d - \dd|\right \|_{\Le}^2 \leq {}& c \left ( (\nabla \fn d -\nabla \dd ) \o ( \fn d -\dd) \dreidotkom \f\Theta \dreidots  ( \nabla \fn d -\nabla \dd ) \o ( \fn d -\dd) \right ) \\&+ c\| \nabla \fn d-\nabla \dd\|_{\Le}^2 + c\| \fn d - \dd\|_{\Le}^2\,. 
\end{align*}
Note that the boundary values are regular enough such that the approximating sequence can be chosen in a way that its elements fulfill the same boundary conditions $\tr(\fn d ) = \f d_1$ in $\Hrand{1}$. 
Going to the limit in the approximation of the generalized Gradient Young measure implies
\begin{align*}
\ll{\nu_t , | \f S - \nabla \dd |^2 | \f h - \dd |^2 } \leq{}& c \ll{\nu_t , ( \f S - \nabla\dd ) \o ( \f h - \dd ) \dreidots \f \Theta \dreidots ( \f S - \nabla\dd ) \o (\f h - \dd )} \\
&+c \ll{\nu_t , | \f S -\nabla \dd|^2 }+ c \| \f d - \dd\|_{\Le}^2\,.
\end{align*}
The same rearrangements  as in the proof of~\cite[Lemma~4.1]{weakstrong} lead to~\eqref{absch3}. 

Finally, we observe with the embedding $\He \hookrightarrow \Le$  in three space dimensions that
\begin{align}\label{L12}
\Big \| | \f d - \dd | ^2 \Big \|_{\f L^6}^2 \leq  c \left (\left \| \nabla | \f d - \dd|^2\right \|_{\Le}^2 +\left \| |  \f d - \dd |^2 \right \|_{\Le}^2 \right ) \,.
\end{align}
Taking the derivative of the absolute value, we observe for the first term on the right-hand side that
\begin{align*}
\left \| \nabla | \f d - \dd|^2\right \|_{\Le}^2 \leq  \left \| 2 ( \nabla \f d - \nabla \dd )^T (\f d - \dd) \right  \|_{\Le}^2 \leq 4 \left \| \nabla \f d - \nabla \dd | \f d -\dd| \right \|_{\Le}^2  \,.
\end{align*}
The positivity of the defect measure and Jensen's inequality yields
\begin{align*}
  \left \| \nabla \f d - \nabla \dd | \f d -\dd| \right \|_{\Le}^2 \leq \ll{\nu_t , | \f S - \nabla \dd |^2 | \f h - \dd |^2 }  \,.
\end{align*}

To estimate  the second term on the right-hand side of~\eqref{L12}, we  adopt the Gagliardo-Nirenberg inequality 
\begin{align*}
\| \f d -\dd \|_{\f L^4} ^2 \leq  c \| \f d -\dd\|_{\He}^{4/3} \| \f d -\dd\|_{\f L^2}^{2/3}  \leq c \left ( \| \nabla \f d - \dd \| _{\Le}^2  + \| \f d -\dd\|_{\Le}^2 \right ) \leq c \left ( \ll{\nu_t , | \f S- \nabla \dd |^2 } + \| \f d - \dd \|_{\Le}^2\right )  \,.
\end{align*}
The second inequality is an application of Young's inequality and for the third one we employ~\eqref{zweiter}. 
\end{proof}
\begin{corollary}\label{cor:rand}
Let $\Omega\subset \R^{3}$ be of class $\C^{1,1}$. 
Then there exists a constant $c>0$ such that
\begin{align*}
\| \f d -\dd \|_{\Le } ^2 \leq c \|\nabla  \f d -\nabla  \dd \|_{\Le}^2 + c \| \tr(\f d -\dd) \|_{\Hrand{1}}^2 \,.
\end{align*}
for all $\f d$, $\dd \in \He$.
\end{corollary}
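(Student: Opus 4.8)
The plan is to read the claim as a generalized Poincar\'e inequality for the difference $\f w := \f d - \dd \in \He$, namely $\| \f w\|_{\Le}^2 \leq c \| \nabla \f w\|_{\Le}^2 + c\|\tr(\f w)\|_{\Hrand{1}}^2$. First I would observe that the converse estimate is immediate from the continuity of the trace operator $\tr\colon \He \ra \Hrand{1}$, which is available because $\Omega$ is of class $\C^{1,1}$ (see~\cite[Theorem~3.37]{mclean}); the actual content is thus only the stated direction, and it amounts to showing that the right-hand side defines a norm equivalent to $\|\cdot\|_{\He}$ on $\He$. I would establish this by a compactness-contradiction argument built on the compact embedding $\He \hookrightarrow \Le$ (Rellich), rather than by any explicit computation.

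Concretely, I would suppose that no such constant exists and extract a sequence $\{\f w_n\}_{n\in\N}\subset \He$ normalised by $\|\f w_n\|_{\Le}=1$ and satisfying $\|\nabla \f w_n\|_{\Le}^2 + \|\tr(\f w_n)\|_{\Hrand{1}}^2 \ra 0$. Such a sequence is bounded in $\He$, so after passing to a subsequence I obtain $\f w_n \ra \f w$ strongly in $\Le$ with $\|\f w\|_{\Le}=1$, in particular $\f w\neq 0$. Since simultaneously $\nabla \f w_n \ra 0$ in $\Le$, the distributional limit of the gradients forces $\nabla \f w = 0$, and then
\[
\| \f w_n - \f w\|_{\He}^2 = \| \f w_n - \f w\|_{\Le}^2 + \| \nabla \f w_n\|_{\Le}^2 \ra 0\,,
\]
so that the convergence is in fact strong in $\He$. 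Because $\Omega$ is connected, $\nabla \f w = 0$ means that $\f w$ is a constant vector, and it is nonzero by the normalisation.

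The decisive step is the limit passage in the boundary term, and here the upgrade to strong $\He$-convergence pays off: continuity of the trace gives $\tr(\f w_n)\ra \tr(\f w)$ in $\Hrand{1}$, while by construction $\tr(\f w_n)\ra 0$, whence $\tr(\f w)=0$. A nonzero constant vector, however, has a nonvanishing trace on $\partial\Omega$, which contradicts $\f w\neq 0$ and closes the argument. I expect the genuinely delicate points to be mere bookkeeping: controlling the boundary term in the full $\Hrand{1}$-norm, which is precisely where the $\C^{1,1}$-regularity is needed, and invoking connectedness of $\Omega$ to identify the limit as a single constant; once the weak $\He$-limit is upgraded to a strong one as above, passing to the limit in the trace is routine.
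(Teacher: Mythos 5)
Your proof is correct, but it takes a genuinely different route from the paper. The paper argues directly and constructively: it invokes the continuous right-inverse $\Sr\colon \Hrand{1}\ra \He$ of the trace operator (available for $\C^{1,1}$ domains), writes $\f d -\dd = \bigl( \f d - \Sr(\tr(\f d))\bigr) - \bigl(\dd - \Sr(\tr(\dd))\bigr) + \Sr\bigl(\tr(\f d)-\tr(\dd)\bigr)$, applies Poincar\'e's inequality to the first two terms (which have vanishing trace), and estimates the lifting term by the operator norm of $\Sr$; this yields the constant explicitly in terms of the Poincar\'e constant and $\|\Sr\|$, and it needs no connectedness assumption. Your compactness--contradiction (Peetre--Tartar) argument via Rellich is equally valid: every step checks out, and in fact you could skip the upgrade to strong $\He$-convergence, since weak convergence $\f w_n \rightharpoonup \f w$ in $\He$ already gives $\tr(\f w_n)\rightharpoonup \tr(\f w)$ in $\Hrand{1}$, which combined with $\tr(\f w_n)\ra 0$ identifies $\tr(\f w)=0$. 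What your approach buys is generality and economy of hypotheses: it uses only continuity of the trace (not its surjectivity or a bounded right-inverse) and would work verbatim with weaker boundary norms such as $\f L^2(\partial\Omega)$, since all that matters is that nonzero constants have nonvanishing trace. What it costs is constructiveness --- the constant is obtained by contradiction and is not traceable --- and it genuinely requires connectedness of $\Omega$ (or a component-by-component repetition of the argument), a point you correctly flag but which the paper's extension-operator proof avoids entirely.
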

\begin{proof}
We employ a result on extension operators similar to the one in~\cite[Theorem~4.1]{masswertig}.
First, we note that the trace is continuous as a mapping between $ \He $ and $\Hrand{1}$. It is even surjective. 

Conversely,  there exists a linear continuous operator $\Sr: \f H^{\nicefrac{1}{2}}(\partial \Omega)  \ra \f H^{1}(\Omega)$, where $\Omega$ is of class $\C^{1,1}$.
This operator is the right-inverse of the trace operator, i.e.~for all $\f g\in \f H^{\nicefrac{1}{2}}(\partial \Omega)  $, it holds  $ \Sr\f g = \f g $  on $\partial\Omega$ in the sense of the trace operator.
Therewith, we can estimate with Poincar\'{e}'s inequality
\begin{align*}
\| \f d -\dd \|_{\Le} \leq{}& \| \f d - \Sr (\tr(\f d)) - ( \dd - \Sr(\tr(\dd)) \|_{\Le} + \| \Sr (\tr(\f d)) -\Sr (\tr (\dd)) \|_{\Le} \\  \leq{}& c \|\nabla ( \f d - \Sr (\tr(\f d))) - \nabla ( \dd - \Sr(\tr(\dd)) \|_{\Le}  + \| \Sr (\tr(\f d)) -\Sr (\tr (\dd)) \|_{\Le} \\  \leq{}& c \| \nabla \f d - \nabla \dd \|_{\Le} + c  \| \Sr (\tr(\f d)) - \Sr (\tr(\dd)) \|_{\He} \\
\leq {}& c \| \nabla \f d - \nabla \dd \|_{\Le} + c \| \tr(\f d) - \tr( \dd) \|_{\Hrand{1}} \,.
\end{align*}

\end{proof}
\begin{remark}
It is also possible to 
prove the dependence on the difference of the boundary values in the $\Hrand{1}$- norm instead of the difference of the function in the $\Le$-norm immediately 
in Lemma~\ref{Sobolev}. 
We exemplify the calculations for the inequality~\eqref{absch1}. Indeed, similar to the proof of~\cite[Proposition~5.1]{masswertig}, we observe that 
\begin{align*}
\|\nabla \f d -\nabla  \dd \|_{\Le}^2 \leq  \| \di ( \f d -\dd) \|_{L^2}^2 + \| \curl (\f d -\dd) \|_{\Le}^2 &+ \langle \f \gamma_{\f n} (\nabla \f d -\nabla \dd), \f \gamma_0( \f d - \nabla \f d ) \rangle _{\Hrand{-1},\Hrand{1}}\\  &+ \langle \f \gamma_{\f n} ( \f d - \dd), \f \gamma_0( \di \f d - \di \f d ) \rangle _{\Hrand{1},\Hrand{-1}}  \,.
\end{align*}
The boundary terms can be estimated by (compare~\cite[p.96 ff.]{mclean} for the definition of the Sobolev spaces on the boundary and the dual pairings)
\begin{align*}
& \langle \f \gamma_{\f n} (\nabla \f d -\nabla \dd), \f \gamma_0( \f d - \nabla \dd ) \rangle _{\Hrand{-1},\Hrand{1}}+ \langle \f \gamma_{\f n} ( \f d - \dd), \f \gamma_0( \di \f d - \di \f d ) \rangle _{\Hrand{1},\Hrand{-1}}  \\  &\qquad \leq  c \| \nabla \f d - \nabla \dd \|_{\Hrand{-1}}\| \f d - \dd \|_{\Hrand{1}}  
+ c \| \f d - \dd \|_{\Hrand{1}}\| \nabla\cdot \f d - \nabla\cdot \dd \|_{\Hrand{-1}} 
\\
&
 \qquad
 \leq c \| \f d - \dd \|_{\Hrand{1}}^2 \,.
\end{align*}
The above inequality  provides the assertion of this remark. 
\end{remark}

\subsection{Electromagnetic field effects}
In this section, we extend our model by an electromagnetic field influencing the dynamics of the liquid crystal. 
Therefore, the model is adapted by adding an electromagnetic potential to the free energy. 
The adapted free energy potential is given by~\cite[Section~3.2]{gennes}
\begin{align}
F_{\f H} (\f d ,\nabla \f d , \f H) = F( \f d ,\nabla \f d ) - \frac{\chi_{\|}}{2} | \f d \cdot \f H |^2 - \frac{\chi_{\bot}}{2} | \f d \times \f H|^2 \,,\label{electroenergy}
\end{align}
where the free energy potential $F$ is given by~\eqref{frei}. 
The associated variational derivative is given via the definition~\eqref{qdefq} by
\begin{align}
\f q _{\f H}  = \f q - \chi_{\|} ( \f d \cdot \f H) \f H + \chi_{\bot}\f H \times (\f H \times \f d) \,\label{qH}
\end{align}
with $\f q$ defined in~\eqref{qdef}. 
Here $ \chi_{\|} $ and $ \chi_{\bot}$ denote the constants measuring the magnetic susceptibility parallel and orthorgonal to the director,  both constants are negative $\chi_{\|} , \chi_{\bot} < 0$ due to the diamagnetic properties of liquid crystals (compare~\cite[Section~3.2]{gennes}). 
In usual nematic liquid crystals, it holds that $\chi_{\|}> \chi_{\bot}$, such that $|\chi_{\|}|< |\chi_{\bot}|$ which agrees with the naive perception since molecules that are not aligned should experience a bigger impact.
Using the calculation rules in Section~\ref{sec:not}, we observe for $\f d $ with $|\f d|=1$ that 
\begin{align*}
- \frac{\chi_{\|}}{2} | \f d \cdot \f H |^2 - \frac{\chi_{\bot}}{2} | \f d \times \f H|^2  = - \frac{\chi_{\|}}{2} | \f d \cdot \f H |^2 - \frac{\chi_{\bot}}{2} ( | \f d |^2 | \f H|^2 -  (\f d \cdot \f H)^2 )  = -  \frac{\chi_{\|}-\chi_{\bot}}{2} (\f d \cdot \f H)^2 - \frac{\chi_{\bot}}{2}  | \f H|^2\,.
\end{align*}
Since $\f H$ is given and $ {\chi_{\|}-\chi_{\bot}}>0$, this energy part is minimized if $\f d$ is parallel to $\f H$. 
Clearly, the molecules are then aligned along the direction of the electromagnetic field $\f H$.

\begin{proposition}[Integration-by-parts formula]\label{prop:intH}
Consider  $\f d $ and $\dd$ fulfilling~\eqref{reldiss}  and~\eqref{regtest}, respectively. Let additionally $\f H\in  \f L^6  $ and $\HH\in \f L^\infty$ be constant in time. Then 
\begin{align*}
& \chi_{\|}  \left (\f d(t) \cdot \f H , \dd (t)\cdot \HH \right ) + \chi_{\bot} \left ( \f d(t) \times \f H , \dd (t)\times \HH \right ) \\
&\leq    \chi_{\|}  \left (\f d(0) \cdot \f H , \dd (0)\cdot \HH \right ) + \chi_{\bot} \left ( \f d(0) \times \f H , \dd (0)\times \HH \right )\\
&\quad +\int_0^t \left [\left ( \f d \times \t \f d , \chi_{\|}\dd \times \HH (\dd \cdot \HH )- \chi_{\bot}\rot \dd \rot \HH  \rot \HH  \dd \right )  + \left (\dd \times \t \dd ,\chi_{\|} \f d \times \f H ( \f d \cdot \f H) - \chi_{\bot}\rot{\f d}\rot{ \f H}\rot{ \f H}  \f d \right ) \right ]
 \\
 & \quad 
 + \int_0^t 
 \|  \dd \times \t \dd \|_{\f L^\infty }\left ( (\chi_{\|}+\chi_{\bot})\| \dd \times \HH - \f d \times \f H \|_{\f L^2} \| \f d \cdot \f H - \dd \cdot \HH \|_{\Le} +\chi_{\bot} \| \HH\|_{\f L^3}\| \dd \|_{\f L^\infty} \| \f H- \HH \|_{\Le}  \| \f d - \dd \|_{\f L^{6}} \right )  \de s  
 \\
 &\quad + \delta \inttet{\mathcal{W}} +  \inttet{\mathcal{K} \mathcal{E}} + \inttet{\| \f H \|_{\f L^3} ^2 \| \HH\|_{\f L^\infty} ^2 \|\f d - \dd\|_{\f L^6}^2  \mathcal{K}} 
\\
&\quad  +  \inttet{\| \HH\|_{\f L^\infty} ^2 \mathcal{K} \left ( - \chi_{\|} \| \f d \cdot \f H - \dd \cdot \HH\|_{\Le}^2 - \chi_{\bot} \| \f d \times \f H - \dd \times \HH\|_{\Le}^2 \right ) }  
 \\ &\quad - \inttet{\left ( \mathcal{A}_2 , \chi_{\|} ( \f d \times \f H- \dd \times \HH ) ( \dd \cdot \HH)  - \chi_{\bot} ( \f d \cdot \f H - \dd \cdot \HH ) \HH \times \dd + \frac{\chi_{\bot} }{2} \f H ( \f d - \dd ) \rot{\HH}\dd\right ) }\,.
\end{align*}
holds for every $t\in [0,T]$. The dependence on $s$ of the terms under the integral is not written out for brevity. 
\end{proposition}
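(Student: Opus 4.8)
The plan is to differentiate the electromagnetic cross term
$G(t):=\chi_{\|}(\f d(t)\cdot\f H,\dd(t)\cdot\HH)+\chi_{\bot}(\f d(t)\times\f H,\dd(t)\times\HH)$
in time, integrate, and then estimate, mirroring line by line the purely elastic integration-by-parts formulae of Proposition~\ref{prop:int} and the identity~\eqref{intd1}. Since $\f H$ and $\HH$ are constant in time, only $\t\f d$ and $\t\dd$ enter, and the essential device is the unit-length constraint: $|\f d|=1$ gives $\f d\cdot\t\f d=0$ and $\rot{\f d}^T\rot{\f d}=I-\f d\o\f d$, whence $\t\f d=\rot{\f d}^T(\f d\times\t\f d)$ a.e., and likewise for $\dd$. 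Exactly as in the proof of~\eqref{intd1}, I would not use $\t\f d$ itself (which is only weakly controlled) but test the distributional derivative of $G$ against $\phi\in\C_c^\infty(0,T)$ and immediately trade every time derivative for the controlled rotations $\f d\times\t\f d$ and $\dd\times\t\dd$ through this replacement; absolute continuity of $t\mapsto G(t)$ follows once the resulting integrand is shown to lie in $L^1(0,T)$. Here the restriction $\f H\in\f L^6$ is precisely what makes $\f d\times\f H\in\f L^3$, so that the pairing of $\f d\times\t\f d\in L^2(\f L^{3/2})$ (obtained from~\eqref{eq:mdir}) against the $\f L^3$-factors closes by $\f L^{3/2}$--$\f L^3$ duality.

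Next I would carry out the algebra. Using $\rot{\f a}^T=-\rot{\f a}$, $\rot{\f a}\f b=\f a\times\f b$, and the triple-product identity $(\f a\times\f b)\cdot\f c=\f a\cdot(\f b\times\f c)$, the four contributions to $\dot G$ reduce to pairings of $\f d\times\t\f d$ and $\dd\times\t\dd$ against vectors built from $\f d,\f H$ and $\dd,\HH$: the $\chi_{\|}$-part gives $(\f d\times\t\f d,(\f d\times\f H)(\dd\cdot\HH))+(\dd\times\t\dd,(\dd\times\HH)(\f d\cdot\f H))$, and the $\chi_{\bot}$-part gives $(\f d\times\t\f d,\f d\times(\f H\times(\dd\times\HH)))$ together with its symmetric counterpart. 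Replacing, in the vector paired with $\f d\times\t\f d$, the outer director $\f d$ by $\dd$ and the field $\f H$ by $\HH$ (and symmetrically for the other pairing) produces exactly the main terms of the first integral line of the assertion; the needed algebraic reduction for the nested term is $\rot{\dd}\rot{\HH}\rot{\HH}\dd=(\HH\cdot\dd)(\dd\times\HH)$, which shows $\dd\times(\HH\times(\dd\times\HH))=-\rot{\dd}\rot{\HH}\rot{\HH}\dd$ and fixes the sign. The discrepancies left by these replacements are genuine differences, $\f d\times\f H-\dd\times\HH=(\f d-\dd)\times\f H+\dd\times(\f H-\HH)$ and $\f d\cdot\f H-\dd\cdot\HH$, which I would track explicitly.

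The error terms are then routed into the four admissible reservoirs. Those carrying the test rotation $\dd\times\t\dd$, bounded in $\f L^\infty$ by~\eqref{regtest}, are estimated directly by $\|\dd\times\t\dd\|_{\f L^\infty}$ times products of the field and solution differences; Hölder's inequality, using $\f d-\dd\in\f L^6$, yields the $\|\dd\times\t\dd\|_{\f L^\infty}$-terms of the third and fourth lines. After splitting the scalar factor as $\f d\cdot\f H=\dd\cdot\HH+(\f d\cdot\f H-\dd\cdot\HH)$ so that the two rotations meet a common vector, the individual-derivative remainders recombine into the difference $\dd\times\t\dd-\f d\times\t\f d$ paired against a factor $(\f d-\dd)\times\f a$ with $\f a=\f H(\dd\cdot\HH)\in L^\infty(0,T;\f L^3)$; to this I would apply Lemma~\ref{lem:timederi}, which absorbs it into $\int_0^t\mathcal{K}\mathcal{E}$ and $\delta\int_0^t\mathcal{W}$ and leaves the $\mathcal{A}_2$-contributions of the last line, obtained by substituting~\eqref{A2} for $\dd\times\t\dd$. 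Finally, the sign-definite quadratic remainders in the field differences are retained in the $\mathcal{K}$-weighted form of the fifth and sixth lines; since $\chi_{\|},\chi_{\bot}<0$, the combinations $-\chi_{\|}\|\f d\cdot\f H-\dd\cdot\HH\|_{\Le}^2$ and $-\chi_{\bot}\|\f d\times\f H-\dd\times\HH\|_{\Le}^2$ are the relative version of the electromagnetic energy~\eqref{electroenergy} and feed naturally into the Gronwall exponent, which is also the source of the passage from the equality $\dot G$ to the final inequality.

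The main obstacle is bookkeeping rather than a single hard estimate: one must send each of the many cross-product remainders into precisely one reservoir ($\|\dd\times\t\dd\|_{\f L^\infty}$-terms, $\mathcal{K}\mathcal{E}$, $\delta\mathcal{W}$, or $\mathcal{A}_2$) without double counting. The two genuinely delicate points are, first, the low regularity $\f H\in\f L^6$, which forbids treating $\f H$ as a bounded multiplier and forces the $\f L^{3/2}$--$\f L^3$ and $\f L^6$ Sobolev pairings uniformly throughout the argument (in particular in the absolute-continuity step); and second, the nested triple cross products of the $\chi_{\bot}$-terms, where the identity $\rot{\dd}\rot{\HH}\rot{\HH}\dd=(\HH\cdot\dd)(\dd\times\HH)$ and the careful separation of the $(\f d-\dd)$- and $(\f H-\HH)$-differences are both required to reproduce the stated right-hand side while preserving the sign structure that renders the quadratic field terms absorbable.
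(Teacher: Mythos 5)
Your proposal follows the paper's own route essentially step for step: the fundamental theorem of calculus for the time-constant fields, the substitution of $\t \f d$ by $\f d\times\t\f d$ via the unit-length identity (and likewise for $\dd$), the split into the main terms plus a $\|\dd\times\t\dd\|_{\f L^\infty}$-weighted quadratic remainder and a remainder carrying $\f d\times\t\f d-\dd\times\t\dd$, and the absorption of the latter by adapting Lemma~\ref{lem:timederi} after inserting~\eqref{eq:mdir} and the test-function equation, which produces exactly the $\delta\int_0^t\mathcal{W}$, $\int_0^t\mathcal{K}\mathcal{E}$ and $\mathcal{A}_2$ contributions. The only slight imprecision is that the recombined remainder is paired with the paper's $\f a=\chi_{\|}(\f d\times\f H-\dd\times\HH)(\dd\cdot\HH)-\chi_{\bot}\bigl(\rot{\f d}\rot{\f H}-\rot{\dd}\rot{\HH}\bigr)\rot{\HH}\dd$, which is not literally of the form $(\f d-\dd)\times\f a$ assumed in the statement of Lemma~\ref{lem:timederi} (one must adapt the lemma's proof for a generic $\f a\in L^\infty(0,T;\f L^3)$, as the paper does) --- a point you effectively concede by insisting on the careful separation of the $(\f d-\dd)$- and $(\f H-\HH)$-differences.
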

\begin{proof}
The fundamental theorem of calculus grants that
\begin{align*}
& \chi_{\|}  \left (\f d(t) \cdot \f H , \dd (t)\cdot \HH \right ) + \chi_{\bot} \left ( \f d(t) \times \f H , \dd (t)\times \HH \right ) \\
&=    \chi_{\|}  \left (\f d(0) \cdot \f H , \dd (0)\cdot \HH \right ) + \chi_{\bot} \left ( \f d(0) \times \f H , \dd (0)\times \HH \right )\\
&\quad +\int_0^t \left [\left (  \t \f d , \chi_{\|}\f H (\dd \cdot \HH )- \chi_{\bot}\f H \times( \HH \times \dd) \right )  + \left (\t \dd ,\chi_{\|}\HH ( \f d \cdot \f H) -  \chi_{\bot}\HH \times
( \f H \times \f d) \right ) \right ]\de s\,.
\end{align*}
Since $|\f d | = |\dd|=1$, we find with~\cite[Lemma~3.2]{weakstrong} that 
\begin{align*}
\int_0^t \left (  \t \f d , \chi_{\|}\f H (\dd \cdot \HH )-\chi_{\bot} \f H \times (\HH \times \dd ) \right )  \de s
 ={}& \int_0^t \left ( \f d \times  \t \f d , \chi_{\|}\f d \times (\f H (\dd \cdot \HH )- \chi_{\bot} \f H \times( \HH \times \dd)) \right )  \de s
\intertext{and}
\int_0^t \left (\t \dd ,\chi_{\|}\HH ( \f d \cdot \f H) - \chi_{\bot} \HH \times
 (\f H \times \f d) \right )\de s
 ={}& \int_0^t \left (\dd \times \t \dd ,\dd \times (\chi_{\|}\HH ( \f d \cdot \f H) - \chi_{\bot} \HH \times
 (\f H \times \f d)) \right ) \de s\,.
\end{align*}
 Simple rearrangements show that
 \begin{subequations}
  \begin{align}
 & \int_0^t \left [\left ( \f d \times  \t \f d , \f d \times (\chi_{\|}\f H (\dd \cdot \HH )-  \chi_{\bot}\f H \times ( \HH \times \dd) ) \right )  + \left (\dd \times \t \dd ,\dd \times (\chi_{\|}\HH ( \f d \cdot \f H) - \chi_{\bot} \HH \times
( \f H \times \f d)) \right ) \right ]\de s\notag
 \\
 &= \int_0^t \left [\left ( \f d \times \t \f d , \chi_{\|}\dd \times \HH (\dd \cdot \HH )-  \chi_{\bot}  \rot \dd \rot \HH\rot \HH  \dd \right )  + \left (\dd \times \t \dd , \chi_{\|}\f d \times \f H ( \f d \cdot \f H) - \chi_{\bot} \rot {\f d} \rot{ \f H} \rot{ \f H }\f d \right ) \right ]\de s
\notag \\
 &\quad + \int_0^t  \left (   \f d \times \t \f d  - \dd \times \t \dd  ,\chi_{\|} ( \f d \times \f H- \dd \times \HH ) ( \dd \cdot \HH) -  \chi_{\bot} ( \rot {\f d} \rot{ \f H}- \rot \dd \rot \HH ) \rot \HH  \dd    \right )\de s \label{line1}
 \\
 &\quad  + \int_0^t 
  \left (  \dd \times \t \dd , \chi_{\|}( \dd \times \HH - \f d \times \f H ) ( \f d \cdot \f H - \dd \cdot \HH )  -  \chi_{\bot}(\rot \dd\rot  \HH -\rot{ \f d} \rot{ \f H} ) ( \rot  {\f H} \f d   -\rot  \HH \dd ) \right )\de s    \,.\label{line2}
 \end{align}
  \end{subequations}

Estimating the terms in line~\eqref{line2} gives
\begin{align*}
&  \int_0^t 
  \left (  \dd \times \t \dd , \chi_{\|}( \dd \times \HH - \f d \times \f H ) ( \f d \cdot \f H - \dd \cdot \HH ) - \chi_{\bot} (\rot \dd\rot  \HH -\rot{ \f d} \rot{ \f H} ) ( \rot  {\f H} \f d   -\rot  \HH \dd ) \right ) \de s 
  \\&\leq  \int_0^t 
 \|  \dd \times \t \dd \|_{\f L^\infty }\left ((\chi_{\|}+\chi_{\bot}) \| \dd \times \HH - \f d \times \f H \|_{\f L^2} \| \f d \cdot \f H - \dd \cdot \HH \|_{\Le} + 2 \chi_{\bot}\| \HH\|_{\f L^3}\| \dd \|_{\f L^\infty} \| \f H- \HH \|_{\Le}  \| \f d - \dd \|_{\f L^{6}} \right )   \de s  
 \,,
\end{align*}
where we used 
\begin{align*}
(\rot \dd\rot  \HH -\rot{ \f d} \rot{ \f H} ) ( \rot  {\f H} \f d   -\rot  \HH \dd )  ={}& ( ( \dd \cdot \HH) I - \HH \o \dd - ( \f d \cdot \f H ) I + \f H \o \f d ) ( \rot  {\f H} \f d   -\rot  \HH \dd )\\ = {}& ( \dd \cdot \HH- \f d \cdot \f H )   ( \rot  {\f H} \f d   -\rot  \HH \dd ) - \HH \dd \rot {\f H} \f d - \f H \f d \rot \HH \dd 
\end{align*}
and 
\begin{align*}
-\HH \dd \rot {\f H} \f d - \f H \f d \rot \HH \dd ={}&- \HH \dd  \rot {\f H- \HH }\f d  + (\HH - \f H) \f d \rot\HH \dd\\ = {}&- \HH \dd  \rot {\f H- \HH }(\f d- \dd)  + (\HH - \f H) (\f d- \dd) \rot\HH \dd \,,
\end{align*}
which we find due to the skew-symmetry of $\rot{\f H}$ and $\rot \HH$. 

We use the symbol $\rot{}$ to write the cross-product, since it is a matrix multiplication and thus, in contrast to the cross-product associative.

Very similar to Lemma~\ref{lem:timederi}, we estimate the terms in line~\eqref{line1}. 
We abbreviate $\f a := \chi_{\|} ( \f d \times \f H- \dd \times \HH ) ( \dd \cdot \HH) -  \chi_{\bot} ( \rot {\f d} \rot{ \f H}- \rot \dd \rot \HH ) \rot \HH  \dd  $.
Adapting Lemma~\ref{lem:timederi}, we find for the terms in line~\eqref{line1} by inserting equation~\eqref{eq:mdir} for $\f d $  that 
\begin{align*}
\int_0^t \left ( \f d \times \t \f d -\dd \times \t \dd  , \f a\right ) \de s 
={}& -\inttet { \left ( \f d \times ( \f v \cdot \nabla ) \f d - \dd \times ( \vv\cdot \nabla) \dd - \f d \times  \sk v \f d - \dd \times \skv \dd , \f a\right ) }
\\
&- \inttet{\left ( \lambda \f d \times \sy v\f d -\lambda \dd \times \syv \dd + \f d \times \f q - \dd \times \tq, \f a 
\right )
} - \inttet{\left ( \mathcal{A}_2(s), \f a \right )  }\,.
\end{align*}
Similar rearrangements and estimates as in Lemma~\ref{lem:timederi} show
\begin{align*}
\int_0^t & { \left ( \f d \times ( \f v \cdot \nabla ) \f d - \dd \times ( \vv\cdot \nabla) \dd - \f d \times  \sk v \f d - \dd \times \skv \dd , \f a\right ) } \de s \\
\leq{}&  \delta \inttet{\| \f v - \vv \|_{\f L^6}^2 } + C_\delta \| \f d \|_{L^\infty(\f L^\infty)} ^2\inttet{  \left ( \ll{\nu_t, | \f S - \nabla \dd |^2 } \| \f d \|_{L^\infty(\f L^\infty)} ^2 \| \f a \|_{\f L^3}^2 + \| \nabla \dd \|_{L^\infty(\f L^3)}^2 \| \f a \|_{\f L^2}^2  \right )} \\
&+ \inttet{  \left [ \| \f d \|_{L^\infty(\f L^\infty)}\| \vv \|_{\f L^\infty} \ll{\nu_t | \f S - \nabla \dd |} \| \f a \|_{\f L^2}+ \| \f d -\dd \|_{\f L^6} \| \vv \|_{\f L^\infty} \| \nabla \dd \|_{\f L^3} \| \f a \|_{\f L^2}\right ]} \\
& + \delta \inttet{\| \sk v-\skv \|_{\Le}^2 } +  C_{\delta} \inttet{ \left [\| \f d \|_{L^\infty(\f L^\infty)}^4 \| \f a \|_{\f L^2}^2 + \| \f d - \dd \|_{\f L^6} (\| \f d \|_{L^\infty(\f L^\infty)} + \| \dd \|_{L^\infty(\f L^\infty)} ) \| \f a \|_{\f L^2}\right ]}\,
\end{align*}
as well as
\begin{align*}
 \int_0^t &{\left ( \lambda \f d \times \sy v\f d -\lambda \dd \times \syv \dd + \f d \times \f q - \dd \times \tq, \f a 
\right )
} \de s 
\\
\leq{}& \delta \inttet{\| \sy v\f d -\syv \dd \|_{\Le}^2 } + \inttet{\left [C_{\delta} \| \f d \|_{L^\infty( L^\infty)}^2 \| \f a \|_{\Le}^2 + \| \f d - \dd \|_{\f L^6} \| \syv \dd \|_{\f L^3} \| \f a \|_{\f L^2} \right ]} \\&+ \delta\inttet{ \| \f d \times \f q - \dd \times \tq \|_{\Le}^2} + C_{\delta } \inttet{\| \f a \|_{\Le}^2 } \,.
\end{align*}
For the abbreviation $\f a$, we find with the properties of the operator $\rot{}$ (see Section~\ref{sec:not}) that
\begin{align*}
\f a ={}&  \chi_{\|} ( \f d \times \f H- \dd \times \HH ) ( \dd \cdot \HH)  - \chi_{\bot} ( \f d \cdot \f H - \dd \cdot \HH ) \HH \times \dd + \chi_{\bot} \left ( \f H \o \f d - \HH \o \dd\right ) \rot{\HH} \dd \\
={}& \chi_{\|} ( \f d \times \f H- \dd \times \HH ) ( \dd \cdot \HH)  - \chi_{\bot} ( \f d \cdot \f H - \dd \cdot \HH ) \HH \times \dd + {\chi_{\bot} } \f H ( \f d - \dd ) \rot{\HH}\dd  \,.
\end{align*}

Due to the asserted regularity, i.e., $\f H \in \f L^3 $ as well as $\HH\in \f L^\infty$ and $\f d $, $\dd\in L^\infty(0,T;\f L^\infty)$, we observe that $\f a \in L^\infty(0,T;\f L^3) $ and the estimate
 \begin{align*}
\| \f a \|_{\Le} \leq {}&- \chi_{\|}  \| \HH\|_{\f L^\infty}\| \dd \|_{L^\infty(\f L^\infty)} \| \f d \times \f H - \dd \times \HH\|_{\Le} - \chi_{\bot}  \| \HH\|_{\f L^\infty}\| \dd \|_{L^\infty(\f L^\infty)} \| \f d \cdot \f H - \dd \cdot \HH\|_{\Le} \\&-  \chi_{\bot}  \| \HH\|_{\f L^\infty}\| \dd \|_{L^\infty(\f L^\infty)} \| \f H \|_{\f L^3} \| \f d -\dd \|_{\f L^6}\,.
\end{align*}
Note that $\chi_{\|}$ and $\chi_{\bot}$ are negative constants. 
Putting everything together yields the assertion. 
\end{proof}
We define the relative energy for the case of different electromagnetic fields via
\begin{subequations}\label{Nummer}
\begin{align}
\mathcal{E}^M_{\f H , \HH}(t) = \mathcal{E}^M(t) - \frac{\chi_{\|}}{2} \| \f d(t) \cdot \f H - \dd (t)\cdot \HH \|_{\Le}^2 - \frac{\chi_{\bot}}{2} \| \f d (t) \times \f H  - \dd (t) \times \HH  \|_{\Le}^2 \,
\end{align}
and the adapted function $\mathcal{K}_{\f H , \HH} $ via 
\begin{align}
\mathcal{K}_{\f H , \HH } :=( 1 + \| \HH \|_{\f L^\infty} ^2 + \| \f  H\|_{\f L^3} ^2 \| \HH \|_{\f L^\infty}^2 )  \mathcal{K}\,.
\end{align}
\end{subequations}

Note that $\f H$ and $\HH$ are constant in time. To be precise, we remark that also $\mathcal{A}$ and $\mathcal{W}$	 has to be adapted since we insert everywhere  $\tq_{\HH} $ instead of  $\tq$ and $\f q_{\f H}$ instead of $\f q$. The term $\tq_{\HH}$ is given by~\eqref{qH} with $\f q $, $\f d$, and $\f H$ replaced by $\tq$, $\dd$, and $\HH$, respectively.

With this, we prove an adapted relative energy inequality.

\subsection{Adapted relative energy inequality}
\begin{proposition}[Generalized relative energy inequality]\label{prop:bound}
Let $(\f v , \f d , (\nu^o,m,\nu^\infty),( \mu_t,\nu^\mu))$ be a measure-valued solution (see Definition~\ref{def:meas}) and let $(\vv, \dd)$ be a test function fulfilling the regularity assumptions~\eqref{regtest} but possibly different boundary conditions. 
Let $\f H \in \f L^3$ and $\HH \in \f L^\infty$ be given as constant functions in time.  
Then the adapted relative energy inequality
\begin{align}
\begin{split}
\frac{1}{2}\mathcal{E} ^M_{\f H, \HH  }(t) + {}&\frac{1}{2}\int_0^t\mathcal{W}(s) \exp\left ({\int_s^t\mathcal{K}_{\f H , \HH }(\tau)\de \tau }\right )\de s
+ \left ( 1 - \exp\left ({\int_0^t\mathcal{K}_{\f H , \HH }(s)\de s }\right )\right )( \| \f d - \dd \|_{\Hrand{1}}^2 + \| \f H - \HH \|_{\f L^2}^2 ) 
\\ \leq{}&  \Big ( \mathcal{E}_{\f H, \HH  }(0)+\frac{|\f \Theta|^2}{2k}\|\nabla \dd \o \dd \|_{L^\infty (L^\infty)} ^2 \left \|   \f d(0)- \dd(0)    \right \|_{\Le}^2 \Big) \exp\left ({\int_0^t\mathcal{K}_{\f H , \HH }(s)\de s } \right )
\\
& +\Big ((\nabla \f d(0)-\nabla \dd (0))\o (\f d(0)-  \dd(0)) \dreidotkom \f \Theta \dreidots \nabla \dd (0)\o \dd(0) \Big) \exp\left ({\int_0^t\mathcal{K}_{\f H , \HH }(s)\de s }\right ) 
\\&
+ \int_0^t \left ( \mathcal{A}(s) , 
\begin{pmatrix}
\vv -\f v \\
\f d \times (\tq_{\HH} -   \f q_{\f H} )  + \f d   \times \dd \frac{|\f \Theta|^2}{k}\|\nabla \dd \o \dd \|_{L^\infty(\f L^\infty)} ^2   - \f a_{\f H , \HH} 
\end{pmatrix}\right )
\exp\left ({\int_s^t\mathcal{K}_{\f H , \HH }(\tau)\de \tau }\right )\de s  
\, 
\end{split}\label{relenineqbound}
\end{align} 
holds for almost all $t\in (0,T)$, where $\mathcal{A} $ is given by~\eqref{A} and $\f a_{\f H, \HH} $  by
\begin{align*}
\f a_{\f H, \HH} := \chi_{\|} ( \f d \times \f H- \dd \times \HH ) ( \dd \cdot \HH)  - \chi_{\bot} ( \f d \cdot \f H - \dd \cdot \HH ) \HH \times \dd + {\chi_{\bot} } \f H ( \f d - \dd ) \rot{\HH}\dd\,.
\end{align*}
The potentials $\mathcal{E}_{\f H , \HH } $ and $\mathcal{K}_{\f H , \HH }$ are defined by~\eqref{Nummer}.

\end{proposition}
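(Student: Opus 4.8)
The plan is to run the proof of Theorem~\ref{thm:main} (Section~\ref{sec:main}) essentially line by line, but with two modifications: (i) the free energy $F$ is replaced by the magnetic free energy $F_{\f H}$ of~\eqref{electroenergy} and the forces $\f q,\tq$ by $\f q_{\f H},\tq_{\HH}$ of~\eqref{qH}, so that $\mathcal{A}$ and $\mathcal{W}$ are understood in their magnetically adapted form; and (ii) the assumption $\tr(\f d)=\tr(\dd)$ is dropped. Accordingly I start from the augmented relative measure-valued energy $\mathcal{E}^M_{\f H,\HH}$ of~\eqref{Nummer} and expand it as in~\eqref{relEnM}, now also expanding the magnetic squares $\|\f d\cdot\f H-\dd\cdot\HH\|_{\Le}^2$ and $\|\f d\times\f H-\dd\times\HH\|_{\Le}^2$. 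The self-energy pieces $\|\f d\cdot\f H\|_{\Le}^2,\|\f d\times\f H\|_{\Le}^2$ (and their tilde counterparts) are precisely what the suitable energy inequality~\eqref{energyin} with $F_{\f H}$ and the shifted energy equality of Proposition~\ref{prop:shift} with $F_{\f H}$ provide — the computation of Proposition~\ref{prop:shift} is unchanged since the added potential only contributes the force $\tq_{\HH}-\tq$. The genuinely new objects are the mixed products $(\f d\cdot\f H,\dd\cdot\HH)$ and $(\f d\times\f H,\dd\times\HH)$, which are exactly the quantities controlled by the integration-by-parts inequality of Proposition~\ref{prop:intH}.

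First I would assemble, exactly as in the derivation of~\eqref{vormeins}, the expansion of $\mathcal{E}^M_{\f H,\HH}$ using formula~\eqref{intv}, Corollary~\ref{cor:variational}, and additionally Proposition~\ref{prop:intH} for the magnetic cross terms. The coupling term involving $\mathcal{A}_2$ produced by Proposition~\ref{prop:intH} contributes precisely the abbreviation $\f a_{\f H,\HH}$, which is why it enters the final $\mathcal{A}$-pairing of~\eqref{relenineqbound} with the sign displayed. The remaining error terms coming from Proposition~\ref{prop:intH} are of two kinds: dissipation-like contributions built from $\|\f d\times\f H-\dd\times\HH\|_{\Le}$, $\|\f d\cdot\f H-\dd\cdot\HH\|_{\Le}$ and $\|\sy{v}-\syv\|_{\Le}$, which I absorb into $\delta\mathcal{W}$ for $\delta$ small; and lower-order contributions carrying the factors $\|\HH\|_{\f L^\infty}^2$ and $\|\f H\|_{\f L^3}^2\|\HH\|_{\f L^\infty}^2$, which are exactly the enlargement of $\mathcal{K}$ into $\mathcal{K}_{\f H,\HH}$ recorded in~\eqref{Nummer} (here the magnetic part of $\mathcal{E}^M_{\f H,\HH}$ itself is of the $\mathcal{K}_{\f H,\HH}\mathcal{E}^M_{\f H,\HH}$ form).

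Next I would relax the boundary condition. The only place in the proof of Theorem~\ref{thm:main} where equal traces were used is in the Sobolev-type estimates of~\cite[Lemma~4.1]{weakstrong}; these I replace by Lemma~\ref{Sobolev}, valid for unequal boundary data at the cost of an extra $\|\f d-\dd\|_{\Le}^2$ on each right-hand side. Every such extra term I feed into Corollary~\ref{cor:rand}, splitting $\|\f d-\dd\|_{\Le}^2\le c\|\nabla\f d-\nabla\dd\|_{\Le}^2+c\|\tr(\f d-\dd)\|_{\Hrand{1}}^2$; the gradient part is absorbed into $\mathcal{E}^M$ (hence into $\mathcal{E}^M_{\f H,\HH}$) via the coercivity~\eqref{kill}, while the trace part is the fixed boundary-data mismatch $\|\tr(\f d-\dd)\|_{\Hrand{1}}^2$, constant in time. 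Together with the field mismatch $\|\f H-\HH\|_{\f L^2}^2$ generated by the magnetic cross terms of Proposition~\ref{prop:intH} (via Young's inequality on the products $\|\f H-\HH\|_{\Le}\|\f d-\dd\|_{\f L^6}$), these assemble into the data constant $b:=\|\tr(\f d-\dd)\|_{\Hrand{1}}^2+\|\f H-\HH\|_{\f L^2}^2$.

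At this point the accumulated estimate has, after choosing $\delta=1/2$ and absorbing half of $\mathcal{W}$ to the left, the Gronwall form
\begin{align*}
\tfrac12\mathcal{E}^M_{\f H,\HH}(t)+\tfrac12\int_0^t\mathcal{W}(s)\,\de s\leq{}&\mathcal{E}_{\f H,\HH}(0)+(\text{initial corrections})\\
&+\int_0^t\mathcal{K}_{\f H,\HH}(s)\big(\mathcal{E}^M_{\f H,\HH}(s)+c\,b\big)\,\de s+(\mathcal{A}\text{-pairing})\,.
\end{align*}
Applying the version of Gronwall's inequality used for~\eqref{relenin}, the constant $b$ contributes the factor $\int_0^t\mathcal{K}_{\f H,\HH}(s)\exp\big(\int_s^t\mathcal{K}_{\f H,\HH}(\tau)\,\de\tau\big)\,\de s=\exp\big(\int_0^t\mathcal{K}_{\f H,\HH}\big)-1$; transposing $c\,b\big(\exp(\cdot)-1\big)$ to the left reproduces, up to the constant of Corollary~\ref{cor:rand}, the term $\big(1-\exp(\int_0^t\mathcal{K}_{\f H,\HH})\big)\big(\|\f d-\dd\|_{\Hrand{1}}^2+\|\f H-\HH\|_{\f L^2}^2\big)$ on the left of~\eqref{relenineqbound}, while the remaining terms reproduce its right-hand side. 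I expect the main obstacle to be purely organizational: verifying that every error produced by the two generalizations sorts cleanly into one of three buckets — absorbable into $\delta\mathcal{W}$, of the Gronwall form $\mathcal{K}_{\f H,\HH}\mathcal{E}^M_{\f H,\HH}$, or a constant data mismatch $b$ — and in particular tracking the exact powers of $\|\HH\|_{\f L^\infty}$ and $\|\f H\|_{\f L^3}$ so that the third bucket's prefactor matches the definition of $\mathcal{K}_{\f H,\HH}$ and the non-positivity of $1-\exp(\cdot)$ is respected when transposing $b$.
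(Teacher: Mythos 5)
Your proposal follows essentially the same route as the paper's proof: rerunning the proof of Theorem~\ref{thm:main} with Lemma~\ref{Sobolev} and Corollary~\ref{cor:rand} in place of the equal-boundary estimates, adding Proposition~\ref{prop:intH} for the magnetic cross terms (whose $\mathcal{A}_2$-coupling yields $\f a_{\f H,\HH}$), choosing $\delta=1/2$, applying Gronwall, and then exploiting the time-constancy of the boundary data and fields via the identity $\int_0^t\mathcal{K}_{\f H,\HH}(s)\exp\bigl(\int_s^t\mathcal{K}_{\f H,\HH}(\tau)\,\de\tau\bigr)\de s=\exp\bigl(\int_0^t\mathcal{K}_{\f H,\HH}(s)\,\de s\bigr)-1$, which the paper obtains by an integration by parts in time. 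Your sorting of all error terms into the three buckets (absorbable into $\delta\mathcal{W}$, Gronwall-type $\mathcal{K}_{\f H,\HH}\mathcal{E}^M_{\f H,\HH}$, and the constant data mismatch) is exactly the paper's bookkeeping, including the caveat that the constant from Corollary~\ref{cor:rand} is absorbed into $\mathcal{K}_{\f H,\HH}$.
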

\begin{remark}
This proposition immidaiately generalizes the weak-strong uniqueness result for measure-valued solutions~\cite{weakstrong} and for dissipative solutions (see Remark~\ref{rem:weakstrong}) to a system incorporating the influence of an electromagnetic field. 
Additionally, this relative energy inequality  provides a result on the continuous dependence of solutions on the difference in the electromagnetic field, the boundary values and initial values, as long as a regular solution fulfilling~\eqref{regtest} exists. 

We incorporate this additional electromagnetic field in order to use it as a control parameter in a future article (see~\cite{approx}).
\end{remark}
\begin{proof}
The only difference of the above inequality in comparison to the relative energy inequality~\eqref{relEnergyinequ} in the case of equal boundary values and electro-magnetic field effects are the additional terms in the first line and the additional term in the last line, respectively.  

Going through the proof of Theorem~\ref{thm:main} and inserting every time Lemma~\eqref{Sobolev}  and Corolllary~\ref{cor:rand} instead of~\cite[Lemma~3.1]{weakstrong} yields together with Proposition~\ref{prop:intH} the estimate
\begin{align*}
\begin{split}
\frac{1}{2}\mathcal{E}^M_{\f H, \HH} (t)& + \int_0^t\mathcal{W}(s) \de s 
\\
\leq{}& \mathcal{E}_{\f H, \HH} (0)  + \delta\int_0^t\mathcal{W}(s) \de s +  \int_0^t\mathcal{K}_{\f H, \HH} (s) \mathcal{E}_{\f H,\HH} ^M(s)\de s  \\
&
+\left ((\nabla \f d(0)-\nabla \dd (0))\o (\f d(0)-  \dd(0))\dreidotkom \f \Theta \dreidots \nabla \dd (0)\o \dd(0)\right ) + \frac{|\f \Theta|^2}{k}\| \nabla \dd \o \dd \|_{L^\infty( \f L^\infty)}^2 \left \|   \f d(0)- \dd(0)    \right \|_{\Le}^2 
\\
& + \inttet{ \left (\mathcal{A}(s), \begin{pmatrix}
 \vv- \f v  \\ \f d \times (\tq _{ \HH}-   \f q_{\f H}  ) + \frac{2|\f \Theta|^2}{k}\| \nabla \dd \o \dd \|_{L^\infty( \f L^\infty)}^2 \f d \times \dd - \f a_{ \f H , \HH}  
 \end{pmatrix}\right )}\\
 &+ \inttet{ \mathcal{K}_{ \f H, \HH}(s)\left ( \| \tr( \f d ) - \tr (\dd)\|_{\Hrand{1}}^2  + \| \f H - \HH\|_{\Le}^2 \right )
 }
\,. 
\end{split}
\end{align*}
From Gronwall's estimate, we may infer
\begin{align}
\begin{split}
\frac{1}{2}\mathcal{E}^M_{\f H,\HH}  (t) + {}&\frac{1}{2}\int_0^t\mathcal{W}(s) \exp\left ({\int_s^t\mathcal{K}_{\f H, \HH} (\tau)\de \tau }\right )\de s
\\ \leq{}&  \Big ( \mathcal{E}_{\f H,\HH }(0)+\frac{|\f \Theta|^2}{k}\|\nabla \dd \o \dd \|_{L^\infty (L^\infty)} ^2 \left \|   \f d(0)- \dd(0)    \right \|_{\Le}^2 \Big) \exp\left ({\int_0^t\mathcal{K}_{ \f H, \HH}(s)\de s } \right )
\\
& +\Big ((\nabla \f d(0)-\nabla \dd (0))\o (\f d(0)-  \dd(0)) \dreidotkom \f \Theta \dreidots \nabla \dd (0)\o \dd(0) \Big) \exp\left ({\int_0^t\mathcal{K}_{ \f H, \HH}(s)\de s }\right ) 
\\&
+ \int_0^t \left ( \mathcal{A}(s) , 
\begin{pmatrix}
\vv -\f v \\
\f d \times (\tq_{\HH} -   \f q_{\f H} )  + \f d   \times \dd \frac{2|\f \Theta|^2}{k}\|\nabla \dd \o \dd \|_{L^\infty(\f L^\infty)} ^2   - \f a_{ \f H , \HH} 
\end{pmatrix}\right )
\exp\left ({\int_s^t\mathcal{K}_{ \f H, \HH}(\tau)\de \tau }\right )\de s  
\\&+ \inttet{ \mathcal{K}_{ \f H, \HH}(s) \exp\left ({\int_s^t\mathcal{K}_{ \f H, \HH}(\tau)\de \tau }\right ) \left ( \| \tr( \f d ) - \tr (\dd)\|_{\Hrand{1}}^2  + \| \f H - \HH\|_{\Le}^2 \right  )
 }\,.
 \end{split}\label{einederer}
 \end{align} 
 With an integration-by-parts, we observe for the term in the last line
 \begin{align*}
  \int_0^t& \mathcal{K}_{ \f H, \HH}(s) \exp\left ({\int_s^t\mathcal{K}_{ \f H, \HH}(\tau)\de \tau }\right )  \left ( \| \tr( \f d ) - \tr (\dd)\|_{\Hrand{1}}^2  + \| \f H - \HH\|_{\Le}^2 \right  )\de s 
  \\= & \left [  - \exp\left ({\int_s^t\mathcal{K}_{ \f H, \HH}(\tau)\de \tau }\right )   \left ( \| \tr( \f d ) - \tr (\dd)\|_{\Hrand{1}}^2  + \| \f H - \HH\|_{\Le}^2 \right  ) \right ]_{s=0}^t \\ & \quad  + \inttet{  \exp\left ({\int_s^t\mathcal{K}_{ \f H, \HH}(\tau)\de \tau }\right ) \frac{\partial}{\partial s } \left ( \| \tr( \f d ) - \tr (\dd)\|_{\Hrand{1}}^2  + \| \f H - \HH\|_{\Le}^2 \right  )
 }
 \\ =  &\left (  \exp\left ({\int_0^t\mathcal{K}_{ \f H, \HH}(s)\de s }\right )- 1 \right )  \left ( \| \tr( \f d ) - \tr (\dd)\|_{\Hrand{1}}^2  + \| \f H - \HH\|_{\Le}^2 \right  ) \,.
 \end{align*}
Note that the second equality holds since the boundary conditions and the electromagnetic field effects are constant in time. 
Inserting the integration-by-parts formula into~\eqref{einederer} proves the assertion. 

\end{proof}

\section{Long-time behavior\label{sec:6}}
In this section, we are focusing on the long time behavior of the measure-valued as well as the dissipative solutions. 
Again, the relative energy inequality will be essential and we discuss its implications on the long-time behavior of solutions. 
For this section, we assume again $\f H = \HH = 0$, i.e., that there is no electromagnetic field acting on the liquid crystal. All arguments also hold for an additional electromagnetic field (see Remark~\ref{rem:elec}) , but we exclude this possibility to keep the arguments accessible. 

\subsection{Necessary first order  optimality conditions for the Oseen--Frank energy}

First, we recall the Euler--Lagrange equation for the Oseen--Frank energy.
\begin{corollary}\label{cor:Euler}
The Euler-Lagrange equation for a minimizer of the Oseen--Frank energy can be expressed via
\begin{align*}
0 ={}& \f d \times \f q =  - \di \left (\f d \times \left (\f \Lambda : \nabla \f d + \f d \cdot \f \Theta \dreidots \nabla \f d \o \f d \right )\right ) +\nabla \rot{\f d }  : \left (\f \Lambda : \nabla \f d + \f d \cdot \f \Theta \dreidots \nabla \f d \o \f d \right ) + \f d \times(  \nabla \f d: \f \Theta \dreidots \nabla \f d \o \f d    )
\\
= {}&-\di \left (\f d \times F_{\f S} ( \f d , \nabla \f d) \right ) + \nabla \rot{\f d} : F_{\f S}(\f d ,\nabla \f d) + \f d\times F_{\f h}(\f d , \nabla \f d)  
 \,
\end{align*}
for $\f d$ regular enough, i.e., $\f d \in \Hc$. 
\end{corollary}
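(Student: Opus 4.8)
The plan is to read off the variational derivative in the form $\f q = F_{\f h}(\f d, \nabla \f d) - \di F_{\f S}(\f d, \nabla \f d)$ from~\eqref{qdefq}, where the tensor representation~\eqref{tensoren} together with~\eqref{FSFh} identifies $F_{\f S}(\f d, \nabla \f d) = \f \Lambda : \nabla \f d + \f d \cdot \f \Theta \dreidots \nabla \f d \o \f d$ and $F_{\f h}(\f d, \nabla \f d) = \nabla \f d : \f \Theta \dreidots \nabla \f d \o \f d$. The geometric point is that minimizing $\mathcal{F}$ under the pointwise constraint $|\f d| = 1$ only admits variations tangent to the sphere, i.e.\ of the form $\f d \times \f \phi$; since $\intet{\f q \cdot (\f d \times \f \phi)} = -\intet{(\f d \times \f q)\cdot \f \phi}$ by the scalar triple product, the first-order optimality condition is $\f d \times \f q = 0$. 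Thus the whole statement reduces to rewriting $\f d \times \f q = \rot{\f d}\,\f q$ in divergence form.

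The key algebraic step is a Leibniz rule for the matrix $\rot{\f d}$. Writing the cross product as the matrix multiplication $\f d \times (\cdot) = \rot{\f d}(\cdot)$ and expanding $\partial_k(\rot{\f d}_{ij} (F_{\f S})_{jk})$ componentwise gives, in the notation of Section~\ref{sec:not},
\[
\di\!\left(\rot{\f d}\, F_{\f S}\right) = \nabla \rot{\f d} : F_{\f S} + \rot{\f d}\,\di F_{\f S},
\]
where $\nabla \rot{\f d} : F_{\f S}$ is the contraction of the third-order tensor $\nabla \rot{\f d}$ with the matrix $F_{\f S}$. Solving for $\rot{\f d}\,\di F_{\f S}$ and inserting it into $\f d \times \f q = \rot{\f d}\,F_{\f h} - \rot{\f d}\,\di F_{\f S}$, together with the identities $\rot{\f d}\,F_{\f S} = \f d \times F_{\f S}$ and $\rot{\f d}\,F_{\f h} = \f d \times F_{\f h}$, yields
\[
\f d \times \f q = -\di\!\left(\f d \times F_{\f S}\right) + \nabla \rot{\f d} : F_{\f S} + \f d \times F_{\f h},
\]
which is the second displayed identity. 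The first identity then follows verbatim by substituting the explicit expressions for $F_{\f S}$ and $F_{\f h}$ recorded above.

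I expect the only delicate part to be the index bookkeeping: one must verify that the contraction $\nabla \rot{\f d} : F_{\f S}$, read through the product $\f \Gamma : \f A := [\sum_{j,k} \f \Gamma_{ijk} \f A_{jk}]_i$ of Section~\ref{sec:not}, collects precisely the term $[\sum_{j,k} (\partial_k \rot{\f d}_{ij})(F_{\f S})_{jk}]_i$ and no other contraction, and that this is compatible with the convention $\di \f A = [\sum_j \partial_j \f A_{ij}]_i$ for the row-wise matrix divergence. The regularity hypothesis $\f d \in \Hc$ is exactly what legitimizes these manipulations: since $F_{\f S}$ depends on $\nabla \f d$, the divergence $\di F_{\f S}$ involves $\nabla^2 \f d \in \Le$, so the Leibniz rule above is an identity in $\Le$ rather than a purely formal one.
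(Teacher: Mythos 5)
Your proposal is correct, but it takes a genuinely different route from the paper in its first half. The paper does not derive the first-order optimality condition from scratch: it quotes the Euler--Lagrange system of Hardt, Kinderlehrer and Lin, in which the constraint appears through the tangential projection $(I - \f d \o \f d)$, then verifies by expanding $\di \left ( \f d \o \f d \, F_{\f S}(\f d,\nabla \f d) \right )$ that the lower-order terms cancel so that the whole system collapses to $(I - \f d \o \f d)\f q = 0$, and finally translates the projection into cross-product language via $I - \f d \o \f d = \rot{\f d}^T \rot{\f d}$ together with the equivalence $\f d \times \f a = 0 \Leftrightarrow \f d \times ( \f d \times \f a) = 0$ for unit vectors. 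You instead obtain $\f d \times \f q = 0$ directly from tangential variations $\delta \f d = \f d \times \f \phi$ and the scalar triple product, which is more elementary and self-contained (no external citation, no cancellation bookkeeping); for full rigor you should realize these variations by an exact curve of unit-length competitors, e.g.\ pointwise rotations $\f d_\epsilon = R_\epsilon \f d$ generated by $\f \phi \in \C_c^\infty(\Omega;\R^3)$, whose derivative at $\epsilon = 0$ is $\f \phi \times \f d$ --- tangency of the direction alone is shorthand for this, and one should also note that such variations span the tangential test directions. Conversely, your proof supplies precisely the step the paper leaves implicit: the divergence-form rewriting via the Leibniz rule $\di ( \rot{\f d} F_{\f S}) = \nabla \rot{\f d} : F_{\f S} + \rot{\f d} \di F_{\f S}$, whose index bookkeeping against the conventions for $\f \Gamma : \f A$ and $\di \f A$ in Section~\ref{sec:not} you check correctly, and whose validity in $\Le$ is indeed what the hypothesis $\f d \in \Hc$ (with $|\f d|=1$, so $\nabla \f d \in \f L^6$ and the quadratic gradient terms lie in $\f L^3$) guarantees; the paper states the displayed identity without carrying out this computation. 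In short, the paper's route buys the explicit link between the projected form $(I - \f d \o \f d)\f q = 0$ from the literature and the cross-product form; yours buys independence from the quoted reference and a complete justification of the displayed divergence-form identity.
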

\begin{proof}
A minimizer of the Oseen--Frank energy is a stationary point and hence, fulfills the associated first order optimality condition, the Euler--Lagrange equations. 
These are given in Hardt, Kinderlehrer and Lin~\cite{hardtlin}, see also~\cite{hong}.
We omit the derivation here. In our Notation, the Euler--Lagrange equation  for a minimizer $\f d$ of the Oseen--Frank energy can be expressed via (compare~\cite[Section~1]{hardtlin})
\begin{multline}
- \di \left (  \pat{F}{\f S} (\f d , \nabla \f d ))- \f d \o \f d \pat{F}{\f S } ( \f d , \nabla \f d)   \right ) + \pat{F}{\f h }(\f d , \nabla \f d)  -\left (\pat{F}{\f h }(\f d , \nabla \f d \right ) \cdot \f d) \f d  \\ - \left ( \pat{F}{\f S} ( \f d , \nabla \f d) : \nabla \f d\right ) \f d - \nabla \f d \left ( \pat{F}{\f S}(\f d , \nabla \f d)\right )^T \f d = 0 \,.\label{eulerLagrange}
\end{multline}
Calculating the divergence of the second term on the left-hand side gives 
\begin{align*}
\di\left ( \f d \o \f d \pat{F}{\f S } ( \f d , \nabla \f d) \right ) = \f d \o \f d \left ( \di  \pat{F}{\f S }\right )  + \left ( \pat{F}{\f S} ( \f d , \nabla \f d) : \nabla \f d\right ) \f d + \nabla \f d \left ( \pat{F}{\f S}(\f d , \nabla \f d)\right )^T \f d 
\end{align*}
Note that the second and third term on the right-hand side of the previous equation cancel with the terms in the second line of equation~\eqref{eulerLagrange}. 
Thus, equation~\eqref{eulerLagrange} can be simplified to
\begin{align*}
0 =( I - \f d \o \f d) \left (  - \di \pat{F}{\f S}(\f d ,\nabla \f d ) + \pat{F}{\f h} ( \f d ,\nabla \f d)   \right ) =  ( I - \f d \o \f d) \f q \,.
\end{align*}
Where $\f q$ is the variational derivative defined in~\eqref{qdefq}.
Since $\f d$ is a unit vector, we can use the calculus for the cross product introduced in Section~\ref{sec:not} and observe
\begin{align*}
 I - \f d \o \f d =  | \f d |^2 I - \f d \o \f d = \rot{\f d}^T \rot{\f d}\,. 
\end{align*}
Due to the properties of the cross product, a simple observation holds:
For a unit vector $ \f d \in \S2 $ and for all vectors $ \f a \in \R^3$ it holds  that $ \f d \times \f a = 0 \, \Leftrightarrow \, \f d \times \f d \times \f a =0$. 
Indeed, the first implication is obvious, whereas the second implication follows from the calculation rules for the cross product by taking the equation again in the cross product with $\f d$, i.e.,
\begin{align*}
\f d \times \f d \times \f d \times \f a = \f d \times \left ( | \f d|^2 I -\f  d \o \f d \right ) \f a = | \f d|^2 \f d \times \f a - \f d \times \f d ( \f d \cdot \f a) = \f d \times \f a \,.
\end{align*}
Hence, the Euler--Lagrange equation for minimizers of the Oseen--Frank energy can be expressed as stated in the assertion of Corollary~\ref{cor:Euler}.
\end{proof}

\subsection{Convergence for large times}
In the following we consider constant test functions $(\vv , \dd)$. Due to the homogeneous Dirichlet boundary conditions for the velocity field, it is obvious that $\vv \equiv 0$. For the director field we assume constant boundary conditions $ \tr(\dd)= \f d _0 \in \S2$. 
For this system the constant solution $(\vv, \dd)= (0, \f d_0)$ is an obvious solution to the system~\eqref{eq:strong}.

\begin{theorem}
\label{thm:longtime}
Consider the Ericksen--Leslie system equipped with the Oseen--Frank energy~\eqref{eq:strong}-\eqref{frei} 
with vanishing right-hand side, i.e.,~$\f g \equiv 0$.
Then there exists a sequence of time steps $\{t_n\}_{n\in\N}$ with $t_n \ra \infty$ as $n\ra \infty$ such that the measure-valued solutions converge to a steady state, i.e., 
$ ( \f v (t_n) , \f d (t_n) )  \rightharpoonup  ( \f v_\infty , \f d_\infty) $ in $ \He \times \He$, where $ \f v_\infty\equiv 0$ and the generalized gradient Young measure 
$\nu_\infty$ solves the associated Euler--Lagrange equations for the Oseen--Frank energy in a measure-valued sense, i.e., 
\begin{align}
 \left (\rot{\f d_\infty}  F_{ \f S} ( \f d _\infty, \nabla \f d_\infty ) ; \nabla \f \psi  \right ) + \ll{\nu_\infty, \f \Upsilon :\left (\f  S    (F_{\f S}(\f h, \f S))^T\right ) \cdot\f \psi   }+{\ll{\nu_\infty, \left (\f h \times F_{\f h}(\f h, \f S)\right ) \cdot\f \psi   }}= 0 \,\label{EulerMEas}
\end{align}
for $\f \psi \in L^1 (0,T; \He \cap \C(\ov \Omega)$. 

Additionally, there exists a sequence of time steps $\{t_n\}_{n\in\N}$ with $t_n \ra \infty$ for $n\ra \infty$ such that the dissipative solutions converge to a steady state too, i.e., 
$ ( \f v (t_n) , \f d (t_n) , \f d (t_n)\times \f q(t_n))  \ra ( 0 , \f d_\infty, 0) $ in $ \He \times \He\times \Le$.
\end{theorem}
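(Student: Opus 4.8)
The plan is to use the energy inequality~\eqref{energyin} as the only source of compactness and to produce the steady state as a limit along a well-chosen sequence of times. Setting $\f g\equiv 0$ in~\eqref{energyin}, the total energy $E(t):=\tfrac12\|\f v(t)\|_{\Le}^2+\ll{\nu_t,F}+\tfrac12\ll{\mu_t,1}$ stays bounded by $E(0)$ and the accumulated dissipation is finite, $\int_0^\infty\bigl[(\mu_1+\lambda^2)\|\f d\cdot\sy{v}\f d\|_{\Le}^2+\mu_4\|\sy{v}\|_{\Le}^2+(\mu_5+\mu_6-\lambda^2)\|\sy{v}\f d\|_{\Le}^2+\|\f d\times\f q\|_{\Le}^2\bigr]\de s<\infty$. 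Since the integrand is non-negative, its $\liminf$ as $t\to\infty$ vanishes, so I can select $t_n\to\infty$ along which every dissipative term tends to zero; in particular $\|\sy{v(t_n)}\|_{\Le}\to0$ and $\|\f d(t_n)\times\f q(t_n)\|_{\Le}\to0$.

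From $\|\sy{v(t_n)}\|_{\Le}\to0$, Korn's and Poincar\'e's inequalities (recall $\f v(t_n)\in\V$) give $\f v(t_n)\to0$ strongly in $\He$, whence $\f v_\infty\equiv0$. The bound $E(t_n)\le E(0)$ together with the coercivity~\eqref{kill} keeps $\f d(t_n)$ bounded in $\He$, so after passing to a subsequence $\f d(t_n)\rightharpoonup\f d_\infty$ in $\He$ and the generalised gradient Young measures $\nu_{t_n}$ converge to a limit $\nu_\infty$; weak continuity of the trace yields $\tr(\f d_\infty)=\f d_0$. To see that the limit is genuinely stationary I would pass to the time-shifted solutions $\f d(\,\cdot\,+t_n)$ on a fixed window $[0,T]$: the tail estimate $\int_{t_n}^{t_n+T}\|\f d\times\f q\|_{\Le}^2\,\de s\to0$ forces, through~\eqref{eq:mdir} and the vanishing of the velocity, that $\f d\times\t\f d\to0$ in $L^2(0,T;\Le)$ for the shifts, and since $|\f d|=1$ gives $\t\f d=-\f d\times(\f d\times\t\f d)$ the time derivative of the limit vanishes. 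Passing to the limit in the measure-valued identity~\eqref{eq:q}, whose right-hand side $\int(\f d\times\f q,\f\psi)$ tends to zero, while using convergence of $\nu_{t_n}$ against the admissible integrands built from $F_{\f S}$ and $F_{\f h}$, produces exactly~\eqref{EulerMEas}; by Corollary~\ref{cor:Euler} this is the measure-valued Euler--Lagrange equation.

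For the dissipative solution the convergences $\f v(t_n)\to0$ in $\He$ and $\f d(t_n)\times\f q(t_n)\to0$ in $\Le$ are already in hand, so the only remaining point is to upgrade $\f d(t_n)\rightharpoonup\f d_\infty$ to strong convergence in $\He$. I would test the director relation~\eqref{eq:mdir} (equivalently the vanishing of $\f d\times\f q$) against $\f d(t_n)-\f d_\infty$ and isolate the leading second-order contribution $\bigl(\nabla(\f d(t_n)-\f d_\infty);\f\Lambda:\nabla(\f d(t_n)-\f d_\infty)\bigr)$, which by~\eqref{kill} dominates $2k\|\nabla\f d(t_n)-\nabla\f d_\infty\|_{\Le}^2$; the remaining lower-order terms in $\f q$ from~\eqref{qdef} carry a factor $\f d$ or $\nabla\f d$ that converges strongly in $\f L^p$ $(p<6)$ through the compact Sobolev embedding of $\He$ into $\f L^p$, and therefore pass to the limit. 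Equivalently, one shows that the relative energy $\mathcal{E}(t_n)$ between $\f d(t_n)$ and $\f d_\infty$ tends to zero, which by Proposition~\ref{prop:relenab} and coercivity delivers $\nabla\f d(t_n)\to\nabla\f d_\infty$ in $\Le$, hence $\f d(t_n)\to\f d_\infty$ in $\He$.

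The main obstacle is precisely this last step: because the Oseen--Frank energy is nonconvex and the pointwise constraint $|\f d|=1$ makes only the tangential part $\f d\times\f q$ (and not $\f q$ itself) vanish along $t_n$, weak lower semicontinuity alone does not close the strong convergence. The delicate accounting is to show that the normal, Lagrange-multiplier part of $\f q$ contributes negligibly when tested against $\f d(t_n)-\f d_\infty$, using $\f d(t_n)\cdot(\f d(t_n)-\f d_\infty)=\tfrac12|\f d(t_n)-\f d_\infty|^2\to0$ in the strong $\f L^p$ topology, so that the coercive leading term controls the full $\He$-distance; establishing the accompanying energy convergence $\F(\f d(t_n))\to\F(\f d_\infty)$ for the nonconvex functional is the crux. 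The subsequential nature of the statement is essential here, since the Gronwall factor $\exp(\int_0^t\mathcal{K})$ in~\eqref{relenin} grows linearly in $t$ for a time-independent target and rules out convergence of the whole trajectory by the relative energy alone.
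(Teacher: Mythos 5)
Your handling of the measure-valued part follows the paper's route in substance. The paper obtains the finite-dissipation bound from the Lyapunov-type inequality~\eqref{Lyapunov}, derived by inserting the stationary test function $(\vv,\dd)=(0,\f d_0)$ (and $\f H=\HH=0$) into the generalized relative energy inequality~\eqref{relenineqbound}, whereas you read the same bound off the energy inequality~\eqref{energyin}; for \emph{suitable} measure-valued solutions both sources are available and give the same selection of $t_n$ with $(\nabla\f v(t_n))_{\sym}\ra 0$ and $\f d(t_n)\times\f q(t_n)\ra 0$ in $\Le$. Your time-shifted-window argument for stationarity is a harmless variant of the paper's direct estimate of $\|\t\f d\|_{\f L^{6/5}}$ via~\eqref{eq:mdir} and the identity $|\f d|^2\t\f d-\tfrac12\t|\f d|^2\f d=-\f d\times\f d\times\t\f d$, and the limit passage to~\eqref{EulerMEas} is the same in both texts.

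The genuine gap is in the dissipative case, and it is tied to your final paragraph. A dissipative solution in the sense of Definition~\ref{def:diss} satisfies only the relative energy inequality~\eqref{relenin}, not~\eqref{energyin}, so the convergences you declare ``already in hand'' have no foundation for such a solution unless you first extract a uniform-in-time dissipation bound from~\eqref{relenin}. The paper does exactly this through the observation you assert the opposite of: for the constant target one has $\vv\equiv 0$, $\nabla\dd\equiv 0$, $\t\dd\equiv 0$, so every norm entering $\mathcal{K}$ vanishes (including $\|\nabla\dd\o\dd\|_{L^\infty(\f L^\infty)}^4\|\f d\times\dd\|_{\f L^3}^2$; the additive constants in the displayed form of $\mathcal{K}$ stem from cross terms that are themselves identically zero for a constant target), the operator $\mathcal{A}$ and the $\f\Theta$-dependent initial terms vanish as well, and~\eqref{relenin} collapses to the Gronwall-free inequality~\eqref{Lyapunov} with $\mathcal{E}$ in place of $\mathcal{E}^M$. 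This is the \emph{only} mechanism that yields $\int_0^\infty\mathcal{W}\,\de s<\infty$ for dissipative solutions. Your claim that $\exp(\int_0^t\mathcal{K})$ grows for a time-independent target is backwards; were it true, your own selection of $t_n$ in the dissipative case would fail, since the bound on $\int_0^t\mathcal{W}$ would then deteriorate exponentially. The subsequential character of the theorem comes solely from the fact that integrability of the nonnegative $\mathcal{W}$ forces $\liminf_{t\ra\infty}\mathcal{W}(t)=0$ rather than a full limit. Finally, the strong $\He$-convergence of $\f d(t_n)$, which you rightly flag as delicate, is not completed in your sketch (you stop at declaring the energy convergence $\F(\f d(t_n))\ra\F(\f d_\infty)$ ``the crux''); the paper does not carry out your compensation argument either but transfers the assertions to dissipative solutions via~\eqref{Lyapunov} and Proposition~\ref{prop:relenab}, so on this point your proposal identifies a real difficulty without resolving it.
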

\begin{remark}
The equation~\eqref{EulerMEas} is the Euler--Lagrange equation for the Oseen--Frank energy derived in Corollary~\eqref{cor:Euler} in a measure-valued sense. 
This can be seen as the measure-valued generalization of the necessary first-order optimality condition of the Oseen-Frank energy. 

Note that for a convex energy (which the Oseen--Frank energy is not) this would immediately  imply that $\f d$ is a global minimizer.
\end{remark}

\begin{proof}

Inserting the constant solution $(\vv, \dd)= (0 ,\f d_0)$ as well as $ \f H = \HH = 0$  in the relative energy inequality~\eqref{relenineqbound} for different boundary values 
gives 
\begin{align}
\begin{split}
&\frac{1}{2}\mathcal{E}^M(t) +\int_0^t \mathcal{W}(s)\de s 
\\ &\quad \leq{}   \mathcal{E}(0)+\frac{|\f \Theta|^2}{k}\|\nabla \dd \o \dd \|_{L^\infty (L^\infty)} ^2 \left \|   \f d(0)- \dd(0)    \right \|_{\Le}^2
 +(\nabla \f d(0)-\nabla \dd (0))\o (\f d(0)-  \dd(0)) \dreidotkom \f \Theta \dreidots \nabla \dd (0)\o \dd(0)
\,.
\end{split}
\label{Lyapunov}
\end{align}
Note that $\syv\equiv 0$ and $\t \vv\equiv 0 $ as well as $\nabla \dd=0$ and $\t \dd=0$ such that the potential $\mathcal{K}$ in proposition~\ref{prop:bound} vanishes. 

The inequality~\eqref{Lyapunov} implies that the relative energy is a Lyapunov-kind function for the special  equilibrium solution~$(\vv, \dd)=(0,\f d_0)$. 
We remark that this holds for all measure-valued solutions (see Definition~\ref{def:meas}) and dissipative solutions (see Definition~\ref{def:diss}). 


Inequality~\eqref{Lyapunov} implies that the integral $\int_0^\infty\mathcal{W}(s)\de s$ converges. 
The definition of the relative dissipation~\eqref{relW} let us conclude that there exists a sequence $\{t_n\}_{n\in\N}$ with $t_n\ra \infty $ for $n \ra \infty$ such that
\begin{align}
 ( \nabla \f v (t_n))_{\sym}\ra 0  \text{ in } \Le \qquad \text{and} \qquad \f d (t_n) \times \f q(t_n) \ra 0 \text{ in } \Le\label{Wconver}
\end{align}
for $n\ra \infty$.
Due to the homogeneous Dirichlet boundary conditions prescribed for the velocity field, we may infer that $\f v(t_n ) \ra 0 $ for $n\ra \infty$. 
Additionally, we infer that the limit measure $\nu_{\infty} = \lim_{t_n \ra \infty} \nu_{t_n}$ fulfills the first order necessary condition for a minimizer of the Oseen--Frank energy, i.e., the equation
\begin{align*}
 (\rot{\f d_\infty}  F_{ \f S} ( \f d _\infty, \nabla \f d_\infty  ; \nabla \f \psi ) + \ll{\nu_\infty, \f \Upsilon :\left (\f  S    (F_{\f S}(\f h, \f S))^T\right ) \cdot\f \psi   }+{\ll{\nu_\infty, \left (\f h \times F_{\f h}(\f h, \f S)\right ) \cdot\f \psi   }}= 0 \,
\end{align*}
for all $\f \psi \in \Hb \cap \C(\ov \Omega) $. The limits are naturally defined via $\f d_\infty= \lim_{t_n\ra \infty}\f d(t_n)$. Note the difference in the notation $\nu_\infty $ is the generalized Young measure for $t_n\ra \infty$ and $\nu^\infty$ denotes the defect angle measure. 

To estimate the time derivative of $\f d(t_n)$, we employ~\eqref{eq:mdir} and the identity $ | \f d | ^2 \t \f d  - \frac{1}{2} \t | \f d|^2 \f d = - \f d \times \f d \times \t \f d $
\begin{align*}
 \| \t \f d\|_{\f L^{6/5}} ={}&  \left \| | \f d | ^2 \t \f d  - \frac{1}{2} \t | \f d|^2 \f d\right  \| _{\f L^{6/5}}= \| \f d \times \f d \times \t \f d \| _{\f L^{6/5}}
 \\ \leq{}& \| \f d \|_{\f L^{12}} \| \f d \times \t \f d \|_{\f L^{4/3}} \leq \| \f d \|_{\f L^{12}} \| \f d \times \left ( (\f v \cdot \nabla) \f d  -\sk v \f d + \lambda \sy v \f d + \f q   \right )\|_{\f L^{4/3}}\\ \leq{}& c\| \f d \|_{\f L^{12}} \left (\| \f d\|_{\f L^{12}}  \| \f v \|_{\f L^6} \| \nabla \f d\|_{\f L^2}+ \| \f d \|_{\f L^{12}} ^2 \| \sk v \|_{\f L^2}  + \lambda \| \f d \|_{\f L^{12}} ^2 \| \sy v \|_{\f L^2}  + \| \f d \times \f q \|_{\f L^2}\right ) \,.
\end{align*}
Due to~\eqref{Wconver}, the right hand side of the previous estimate converges to zero as $t_n\ra \infty$. 
This implies that $\f d_\infty $ is actually a steady state. 

In regard of~\eqref{relenineqbound} and Proposition~\ref{prop:relenab}, the inequality~\eqref{Lyapunov} also holds for dissipative solutions, with $\mathcal{E}^M $ replaced by $\mathcal{E}$. 
The previous considerations also imply the assertions for the case of dissipative solutions. 
\end{proof}
\begin{remark}[Selectivity of Euler--Lagrange equations in the measure-valued sense]
It is argued in Roub\'i\v{c}ek~\cite[Remark~5.3.8]{RoubicekMeasure} that measure-valued extensions of the Euler--Lagrange equation in the sense of~\eqref{EulerMEas} can loose dramatically in selectivity. 
This means that such generalizations can admit astonishing many solutions apparently without any physical motivation (see also Roub\'i\v{c}ek and Hoffmann~\cite{hoff}). 

This is similar to the natural question: What is the connection between a measure fulfilling the Euler--Lagrange equation~\eqref{EulerMEas} and a minimizer of the Oseen--Frank energy.

The remedy proposed in~\cite{RoubicekMeasure} and~\cite{hoff} relies on the strategy of first relaxing the minimization problem and then deriving first order optimality conditions. 
For a general minimizing problem for a functional of the type~\eqref{frei} with $F: \R^ 3 \times \R^{3\times 3} \ra \R$ , this would result in the relaxed minimizing problem 
\begin{align*}
\min_{\{\f d \in \f L^\infty , \nu_t \in \GY\}}\int_{\Omega} \ll{\nu_t,  F( \f d (\f x) , \f S ) }  \,,\qquad\text{with}\qquad 
 \ll{\nu_t , \f S - \nabla \f d } = 0 \quad \text{a.e.~in }\Omega \,,
\end{align*}
where $\GY$ denote the generalized Young measures (see Definition~\ref{def:meas}). 
First order optimality conditions for such a problem are derived in~\cite[Proposition~5.3.2]{RoubicekMeasure} and~\cite{Sensitivity} to be: There exists a $\f \lambda^* \in \f L^2$ such that 
\begin{subequations}
\begin{align}
\di \f \lambda ^* ={}& \int_{\R^{d\times d } }  F _{\f h} ( \f d , \f S) \nu^o(\de \f S ) \qquad & \text{in }\Hd \,,\label{el1}\\
\int_{\Omega} \int_{\R^{d\times d }} \left ( \f \lambda ^* : \f S - F ( \f d(\f x) , \f S )\right ) \nu^o(\de \f S ) \de \f x = {}& \sup _{\f R \in \f L^2 } \int_{\Omega} \left ( \f \lambda ^*(\f x)  : \f R(\f x) - F ( \f d (\f x) , \f R(\f x) )\right ) \de \f x \qquad & \text{in }\R \,.\label{el2}
\end{align}
\end{subequations}
Note that the assumptions in~\cite{Sensitivity} are not fulfilled for the Oseen--Frank energy~\eqref{frei}. Especially the growth conditions are violated. Nevertheless, the second conditions~\eqref{el2} holds (see~\cite[Lemma~3.1]{Sensitivity}).
This second condition can be seen as a generalization of the classical Weierstra\ss{} condition~\cite[Section 48.8]{zeidler3}
\begin{align*}
F( \f d(\f x) ,\f R) - F( \f d (\f x), \nabla \f d(\f x)) \geq F_{\f S} ( \f d(\f x) , \nabla \f d(\f x)) : ( \f R - \nabla \f d(\f x)) \,.
\end{align*}
which should hold for all $\f R \in \R^{3\times 3}$. 

In the following, we want to argue that, for the special case of the Oseen--Frank energy, the condition~\eqref{el2} is fulfilled for $\f \lambda^*(\f x) = \int_{\R^{3\times 3}}  F_{\f S}(\f d(\f x ), \f S) \nu^o_{\f x}(\f S)= F_{\f S}(\f d(\f x ) , \nabla \f d(\f x)) $. 

%
Note that the case of the Oseen--Frank energy is considerably more difficult than the energy considered in~\cite{Sensitivity} due to the additional norm restriction of the director $| \f d|=1$ and due to the possible concentration effects. 
%

First, we observe for the left-hand side of~\eqref{el2} that
\begin{align}\label{nuuu}
\begin{split}
\ll{\nu , F_{\f S} ( \f h , \f S ) : \f S - F( \f h, \f S)  }  ={}& \ll{\nu, \f S : \f \Lambda : \f S + \f S \o \f h \dreidots \f \Theta \dreidots \f S \o \f h - \frac{ 1}{2}\f S : \f \Lambda : \f S + \f S \o \f h \dreidots \f \Theta \dreidots \f S \o \f h  }
\\ 
={}& \frac{1}{2}\int_{\Omega} \int_{\R^{3\times3}} \left (\f S : \f \Lambda : \f S + \f S \o \f d(\f x) \dreidots \f \Theta \dreidots \f S \o \f d(\f x)   \right ) \nu^o_{\f x } (\de \f S) \de \f x 
\\
{}&+  \frac{1}{2}\int_{\ov \Omega} \int_{\R^{3\times3}} \left (\f S : \f \Lambda : \f S + \f S \o \f h \dreidots \f \Theta \dreidots \f S \o \f h   \right ) \nu^\infty_{\f x } (\de \f h,\de \f S) m(\de \f x) 
\end{split}
\end{align}
For the right-hand side of~\eqref{el2}, we find for all $ \f R\in \Le$ that 
\begin{align}\label{nuuu2}
\begin{split}
 \int_{\Omega}& \left (   F_{\f S} ( \f d(\f x) ,  \nabla \f d( \f x)  ) : \f R( \f x)  - F( \f d (\f x), \f R( \f x) ) \right ) \de \f x\\
 &=
\int_{\Omega} \left (  
\nabla \f d (\f x) : \f \Lambda : \f R(\f x) + \nabla \f d(\f x ) \o \f d(\f x) \dreidots \f \Theta \dreidots \f R (\f x) \o \f d(\f x) \right ) \de \f x \\ &  -\frac{1}{2}\int_{\Omega} \left (\f R( \f x) : \f \Lambda : \f R(\f x) -\f R(\f x) \o \f d(\f x) \dreidots \f \Theta \dreidots \f R(\f x) \o \f d(\f x) 
 \right ) \de \f x \,.
 \end{split}
\end{align}
Subtracting~\eqref{nuuu2} from~\eqref{nuuu} yields with~\eqref{identify}
\begin{align*}
&\ll{\nu , F_{\f S} ( \f h , \f S ) : \f S - F( \f h, \f S)  } - \int_{\Omega} \left (   F_{\f S} ( \f d(\f x) ,  \nabla \f d( \f x)  ) : \f R( \f x)  - F( \f d (\f x), \f R( \f x) ) \right ) \de \f x &
\\
&\quad ={}\frac{1}{2} \ll{\nu, ( \f S - \f R ) : \f \Lambda :( \f S - \f R )  +  ( \f S - \f R )\o \f h  \dreidots \f \Theta  \dreidots( \f S - \f R )\o \f h   }& \geq 0 \,.
\end{align*}
Note that the terms in the last line of~\eqref{nuuu}are positive. 
Therefore, the inequality 
\begin{align*}
\ll{\nu , F_{\f S} ( \f h , \f S ) : \f S - F( \f h, \f S)  } \geq  \int_{\Omega} \left (   F_{\f S} ( \f d(\f x) ,  \nabla \f d( \f x)  ) : \f R( \f x)  - F( \f d (\f x), \f R( \f x) ) \right ) \de \f x \,
\end{align*}
is valid for all $\f R \in \f L^2(\Omega ; \R^{3\times 3})$.
The definition~(see~\cite[Definition~2]{weakstrong}) of a generalized gradient Young measure implies that there exists a sequence $\{ \fn d \} _{n\in\N} \subset \He $ such that $ \{\nabla \fn d\}_{n\in\N} $ converges to $\nu$ in the sense of generalized gradient Young measures. Choosing now the sequence  $ \{\nabla \fn d\}_{n\in\N} $ as the function $\f R$ in~\eqref{nuuu2} and passing to the limit as $n\ra \infty$ leads to the assertion. 
 
 A relaxed Weierstrass-condition is thus intrinsically fulfilled for the problem at hand. 
\end{remark}

\begin{remark}[Electromagnetic field]\label{rem:elec}
The assertions of Corollary~\ref{cor:Euler} and Theorem~\ref{thm:longtime} remain true, if a temporarily constant electromagnetic field $\f H\in\f L^3$ is present. 
In the case of Theorem~\ref{thm:longtime}, this can be observed, by choosing $\HH= 0$, when applying~\eqref{relenineqbound}.
\end{remark}

\addcontentsline{toc}{section}{References}

\bibliographystyle{abbrv}

\end{document}